%% file: main.tex
\documentclass[12pt]{amsart}

\usepackage[margin=1in]{geometry}
\usepackage[T1]{fontenc}
\usepackage{lmodern}
\usepackage{amsmath}
\usepackage{amssymb}
\usepackage{mathtools}
\usepackage{graphicx}
\usepackage{url}
\usepackage[numbers,square,sort&compress]{natbib}
\usepackage[ruled,vlined]{algorithm2e}
\usepackage[shortlabels]{enumitem}
\usepackage{xcolor}
\usepackage{thmtools}
\usepackage{thm-restate}
\usepackage[colorlinks=true,linkcolor=blue,citecolor=blue,urlcolor=blue]{hyperref}
\usepackage[capitalise]{cleveref}

\title[Matrix concentration inequalities for time-inhomogeneous Markov chains]{Matrix concentration inequalities for time-inhomogeneous Markov chains}
\author{Luca Zanetti}
\address{Department of Mathematical Sciences, University of Bath, UK}
\email{lz2040@bath.ac.uk}
\date{\today}

\theoremstyle{plain}
\newtheorem{theorem}{Theorem}
\newtheorem{lemma}[theorem]{Lemma}
\newtheorem{proposition}[theorem]{Proposition}

\theoremstyle{definition}
\newtheorem{definition}[theorem]{Definition}

\theoremstyle{remark}
\newtheorem{remark}[theorem]{Remark}

\newcommand{\E}{\mathbb{E}}
\newcommand{\Pbb}{\mathbb{P}}
\newcommand{\Pbbmu}{\mathbb{P}_{\mu_0}}

\newcommand{\R}{\mathbb{R}}
\newcommand{\C}{\mathbb{C}}
\newcommand{\Hm}{\mathsf{H}_m}
\newcommand{\Id}{I_m}
\newcommand{\op}{\mathrm{op}}
\newcommand{\tr}{\mathrm{tr}}
\newcommand{\Lip}{\mathrm{Lip}}
\newcommand{\diam}{\mathrm{diam}}
\newcommand{\rank}{\mathrm{rank}}
\newcommand{\lmax}{\lambda_{\max}}

\newcommand{\normtwo}[2]{\left\|#1\right\|_{2,#2}}
\newcommand{\norminf}[1]{\left\|#1\right\|_{\infty}}

\newcommand{\supp}{\mathrm{Supp}}
\newcommand{\osc}{osc}

\renewcommand{\Pr}[1]{\mathbb{P}\left[\,#1\,\right]}

\newtheorem{assumption}{Assumption}

\begin{document}

\begin{abstract}
We establish Chernoff-type bounds for the largest eigenvalue of sums of Hermitian random matrices generated by a time-inhomogeneous Markov chain. Our primary regime assumes a compact state space and contractivity of each Markov kernel in Wasserstein distance, i.e., positive Ollivier-Ricci curvature.  This assumption is convenient to verify in inhomogeneous settings and is satisfied by several chains of practical interest, such as stochastic gradient descent on strongly convex smooth objectives. We also develop analogous bounds for noncompact state spaces under a notion of spectral gap for inhomogeneous chains introduced by Saloff-Coste and Zuniga. Finally, we illustrate the utility of our results through an analysis of the Elo rating system, a popular method for ranking players in sports analytics, under a dynamic version of the Bradley-Terry-Luce model.
\end{abstract}

\subjclass[2020]{Primary 60J05; Secondary 60B20, 60F10, 62F07.}

\keywords{Concentration inequalities, time-inhomogeneous Markov chains, Elo rating system, Bradley-Terry-Luce model.}

\maketitle
\vspace{-1em}

\tableofcontents
\newpage

\input{intro}

\input{preliminaries}

\section{Main results}
\label{sec:main_results}
\input{curv}

\input{spectral}

\input{proofs}

\input{elo}

\begingroup
\makeatletter
\let\@tocwrite\@gobbletwo
\section*{Acknowledgements}
\makeatother
\endgroup
The author would like to thank Sam Olesker-Taylor for many insightful conversations about positively curved Markov chains and the Elo rating system.
\vspace{0.5cm}

\providecommand{\doi}[1]{%
  \href{https://doi.org/#1}{https://doi.org/#1}%
}
\bibliographystyle{plainnat}
\bibliography{ref}

\appendix

\input{useful}

\input{kappa_squared}

\end{document}

%% file: intro.tex
\section{Introduction}
Concentration inequalities are fundamental tools in the design and analysis of learning and randomised algorithms.
Although originally developed for sums of i.i.d. real-valued random variables,  substantial progress over the last three decades has extended this theory to (i) dependent, in particular Markov-dependent, random variables \citep{gillman,L:chernoff-markov-disc,JoulinOllivier,P:conc-psgap}; (ii) high-dimensional observables, notably vector- and matrix-valued random variables~\citep{AW02,userfriendly}.
More recently, \citet{GLSS} combined these lines of work to obtain a Chernoff bound for matrix-valued observables of a finite Markov chain. This has been further generalised to arbitrary state spaces and time-dependent observables by \citet{NSW}.

Most of the progress in the Markovian setting, however, has focused on time-homogeneous chains: homogeneity allows one to exploit approaches based on spectral methods~\citep{gillman,L:chernoff-markov-disc,LeonPerron,Healy,GLSS,NSW} or the Poisson equation~\citep{MR4982520,durmus2023rosenthal,peng2025matrix}, which do not readily extend to the inhomogeneous setting.

Motivated by this gap, we study concentration inequalities for matrix-valued observables on time-inhomogeneous Markov chains.
Our main focus will be on chains contractive in Wasserstein distance, i.e., positively curved according to \citet{OllivierJFA}. While positive curvature is a strong notion of convergence and is not ubiquitously satisfied, several important Markov chains arising in algorithm design do exhibit positive curvature, such as Glauber dynamics for the high-temperature Ising model~\citep{OllivierJFA} and stochastic gradient descent on strongly convex and smooth functions~\citep{SGDMarkov} (see \citep{EberleMajka,JoulinOllivier} for further examples). Crucially, positive curvature is a \emph{local-in-time} property, in the sense that it can be verified for each Markov kernel $P_t$ individually, rather than requiring control of the entire evolution of the chain. This makes it well suited to the time-inhomogeneous setting, where it is often difficult to establish properties about the global dynamics of the chain.

To state our first main result in an informal fashion, consider a (possibly inhomogeneous) Markov chain $(X_t)_{t \ge 0}$ on a Polish metric space $(\Omega,d)$ with bounded diameter $D$, whose Markov kernels $(P_t)_{t \ge 1}$ contract in Wasserstein distance by a factor $(1-\kappa)$. Let $F_t$ be $m \times m$ Hermitian-valued observables with uniform Lipschitz constant
$\Lip(F_t)\le L$ (where the Lipschitz constant is defined with respect to the operator norm).  We establish the following concentration inequality on the largest eigenvalue of the empirical averages of these observables:
\begin{equation}
\label{eq:main}
\Pbb\Big(\lmax\Big(\sum_{t=1}^n (F_t(X_t) - \E{F_t(X_t)}) \Big)\ge n\varepsilon\Big)\le
m^{\,2-\frac{\pi}{4}}\,
\exp\left(-\frac{Cn \kappa\varepsilon^2}{L^2D^{2}}\right)
\qquad \text{for all } \varepsilon\ge0,
\end{equation}
where $C>0$ is some universal constant. 
Essentially, we obtain sub-Gaussian concentration with variance proxy \(O(L^2D^2 \kappa^{-1})\).

To prove \eqref{eq:main}, we control the matrix moment generating function via the multivariate
trace inequality of \citet{GLSS} (a many-matrix extension of Golden--Thompson), reducing the problem to bounding the trace of a product of Markov-dependent matrices. We then generalise the ``direct method'' of \citet{L:chernoff-markov-disc} to the matrix and inhomogeneous setting: the main idea of this technique, which Lezaud attributes to Bakry and Ledoux, is to iteratively centre the functions on which the operators $P_1, \dots, P_{n-1}$ act. Our simple but key observation is that,  for a centred matrix-valued $\widetilde F$, we can bound $\sup_x \|\widetilde F(x)\|_{\op}$ by the diameter and the Lipschitz constant of $\widetilde F$. We then exploit the fact that positive curvature implies each kernel $P_t$ is contractive in the Lipschitz seminorm, and this contraction extends to matrix-valued functions as well.

One of the limitations of \cref{eq:main} is that it depends on the diameter of the space. In \cref{thm:curv_diam} we improve the dependency on the diameter to logarithmic, obtaining a variance proxy of order $\Delta_\op^2 \kappa^{-1} (1 + \log \frac{LD}{\Delta_\op})$, where $\Delta_\op \le LD$ is a bound (in operator norm) on the oscillation of $F_t$.

We remark that a factor depending on the diameter and the Lipschitz constant of the functions is needed for any approach relying only on curvature: there are examples where $\kappa = \Omega(1)$, the functions are nicely bounded, but the asymptotic variance is unbounded. Indeed, a logarithmic factor in $LD$ is needed in the worst case, even assuming $\Delta_\op \le 1$ (see \cref{app:tight} for an example). If no assumption on $\Delta_\op$ is imposed, a variance proxy of $L^2D^2 \kappa^{-1}$ is optimal.

To generalise \cref{eq:main} to potentially unbounded state spaces, we consider inhomogeneous kernels contractive on centred functions as operators $L^2(\mu_t) \to L^2(\mu_{t-1})$, where $\mu_t$ denotes the $t$-step distribution of the chain started at $\mu_0$. This allows us to replace the diameter factor by a uniform bound on the norm of the observables. This notion of contractivity, which essentially generalises the spectral gap to inhomogeneous chains, has been investigated by \citet{SCZ} in relation to the \emph{merging time}, i.e., the time a chain takes to forget its initial state.  While this allows us to remove the dependency on the diameter, this notion of contraction can be difficult to verify in practice, as it requires detailed control of the chain’s evolution.

We conclude by discussing an application of our results to the analysis of the Elo rating system~\citep{A:elo-rats, EloOurs}, a widely used method for estimating
the relative skill of players in sports analytics, especially chess and tennis. We study Elo under the Bradley-Terry-Luce model, a standard statistical model for match outcomes, extended to a dynamical setting in which players' true skills can change over time. In this setting, Elo induces a time-inhomogeneous Markov chain: our results imply that, if the true skills change slowly enough, Elo ratings are able to track the evolving skills over time.

In general, our results are useful to analyse learning algorithms where data distributions evolve over time; for instance, they apply to stochastic gradient descent on time-varying strongly convex smooth objectives and to PCA with Markovian data~\citep{StreamingPCAMarkov}, where samples are generated by a time-inhomogeneous chain.

\vspace{0.5cm}
\textbf{Additional related work}
Compared to the homogeneous setting, concentration inequalities for time-inhomogeneous chains are much less explored.
\citet{pillai2014finitesamplepropertiesadaptive} have explicitly adapted the Markov chain Chernoff bounds of \citet{JoulinOllivier} for positively curved chains to the inhomogeneous (scalar)  setting. Their techniques, however, obtain a variance proxy of order $L^2 \sigma_\infty^2 \kappa^{-2}$, where $\sigma_\infty \coloneq \sup_{t,x} \diam \supp P_t(x,\cdot)$ is the granularity of the chain. 
This can be much worse than our bounds whenever mixing is much slower than the diameter of the chain.
For completeness,  we also generalise their techniques to matrix observables in  \cref{app:ollivier}.

The martingale method of \citet{MR2478678} and \citet{MR3837271} also applies to empirical means of scalar observables in inhomogeneous chains. Their techniques, however, typically require stronger mixing properties than positive curvature.

%% file: preliminaries.tex
\section{Preliminaries}
\label{sec:preliminaries}
\subsection{Notation}
Let $\Omega$ be a general state space with $\sigma$-algebra $\mathcal{B}$.
A time-inhomogeneous Markov chain \((X_t)_{t\ge 0}\) is specified by an initial distribution \(\mu_0\) and a sequence of Markov kernels \(P_t\) (\(t\ge 1\)), so that the transition from time \(t-1\) to \(t\) satisfies the following:
\[
\Pbb(X_t\in A\mid X_{t-1}=x)=P_t(x,A),\qquad  \forall \, t\ge 1, \, A \in \mathcal{B}.
\]
For integers \(1\le s\le t\), we denote the inhomogeneous product kernel by
\(
P_{s:t}:=P_sP_{s+1}\cdots P_t.
\)
The probability distribution at time $t$ is given by $\mu_t = \mu_0 P_{1:t}$, where $\mu_t$ is defined recursively as
\[
\mu_t(dy) = \int_\Omega P_t(x,dy) \mu_{t-1}(dx).
\]
Unless explicitly stated otherwise, all expectations and probabilities are taken under the path law started from \(X_0\sim\mu_0\);
we write \(\E_{\mu_0}[\cdot]\) and \(\Pbbmu(\cdot)\) when it is helpful to emphasise this.

For any measurable function $f \colon \Omega \to \R$, $P_t$ acts on $f$ as
\[
P_t f(x) = \int_\Omega f(y) P_t(x,dy) \qquad \forall \, x \in \Omega.
\]
We denote by $\Hm$ the set of $m \times m$ Hermitian matrices.
We use upper case letters for matrix-valued functions $F \colon \Omega \to \mathbb{C}^{m \times m}$.
With some abuse of notation, we let $P_t$ act on $F$ as
\[
P_t F(x) = \int_\Omega F(y) P_t(x,dy) \qquad \forall \, x \in \Omega.
\]

Given a vector $v \in \mathbb{C}^m$, we denote its Euclidean norm by
$\|v\| \coloneq \sqrt{ \sum_{i=1}^m |v_i|^2}$.
 For a matrix $A \in \mathbb{C}^{m \times m}$, we denote its operator norm as
$\|A\|_{\op} \coloneqq \sup_{x \in \mathbb{C}^m \setminus \{\mathbf{0}\} } \frac{\|Ax\|}{\|x\|}$.
The Frobenius norm of $A$ is denoted as $\|A\|_{\mathrm F} \coloneqq \sqrt{\tr A^\ast A}$,
 where $A^\ast$ is the complex adjoint of $A$.
If $A \in \Hm$, we denote its largest eigenvalue as $\lambda_{\max}(A)$.
For $F \colon \Omega \to \mathbb{C}^{m \times m}$, we define $\|F\|_\infty \coloneqq \sup_{x \in \Omega} \|F(x)\|_\op$.

\subsection{Proof outline and preliminary results}
Let $(F_t)_{t\ge 1}$ be a sequence of functions $F_t \colon \Omega \to \Hm$.
We are interested in high probability bounds for objects of the following form:
\[
\lambda_{\max}\left(\sum_{t=1}^n (F_t(X_t) - \E F_t(X_t))  \right).
\]

As standard in matrix concentration inequalities~\citep{userfriendly}, our proof relies on bounding the matrix moment generating function:
for any random \(Z\in\Hm\), any \(s>0\), and any \(a\in\R\),
\begin{equation}\label{eq:laplace_maxeig}
\Pr{\lambda_{\max}(Z)\ge a} \le e^{-sa}\,\E\tr e^{sZ}.
\end{equation}

In our case, $Z$ is a sum of Markov-dependent random matrices. To bound this moment generating function,
our starting point is the recent generalisation of the Golden-Thompson inequality to several matrices, which is due to \citet{GLSS} building on work by \citet{MultiTraceIneq}.
\begin{theorem}[\citet{GLSS}]\label{thm:GLSS}
Let \(H_{1},\dots,H_n\in\Hm\). Then, there exists a probability measure $\nu$ on  $[-\pi/2,\pi/2]$ such that
\begin{equation}\label{eq:GLSS}
\tr\exp\Big(\frac{\pi}{4} \, \sum_{t=1}^n H_t\Big)
\;\le\;
 m^{1-\pi/4}\,
 \int_{-\pi/2}^{\pi/2}
\tr\Bigg(\prod_{t=1}^n e^{\frac{1}{2}e^{i\phi}H_t} \,
\prod_{t=n}^1 e^{\frac{1}{2}e^{-i\phi}H_t}\Bigg)\,\nu(\phi).
\end{equation}
\end{theorem}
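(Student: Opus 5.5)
The plan is to follow the complex-interpolation route of \citet{MultiTraceIneq}, specialised as in \citet{GLSS}. The idea is to deduce \eqref{eq:GLSS} from a Hirschman-type (three-lines) bound for a Schatten norm of an analytic matrix-valued function. The key object is
\[
G(z) \coloneqq \prod_{t=1}^n e^{z H_t},
\]
which is entire in $z$ and satisfies $G(\bar z)^* = \prod_{t=n}^1 e^{z H_t}$ because each $H_t$ is Hermitian. On the boundary points $z = \tfrac12 e^{i\phi}$ we therefore get
\[
\|G(\tfrac12 e^{i\phi})\|_{\mathrm F}^2 = \tr\Big(\prod_{t=1}^n e^{\frac12 e^{i\phi}H_t}\prod_{t=n}^1 e^{\frac12 e^{-i\phi}H_t}\Big),
\]
which is exactly the integrand in \eqref{eq:GLSS}. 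It therefore suffices to bound $\tr \exp(\frac{\pi}{4}\sum_t H_t)$ by an average of $\log\|G(\tfrac12 e^{i\phi})\|_{\mathrm F}^2$ against a probability measure $\nu$, and then exponentiate.

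The first step is to obtain the sum $\sum_t H_t$ from products at small $z$ via the Lie--Trotter formula: $\big(G(z/k)\big)^k \to e^{z\sum_t H_t}$ as $k \to \infty$. The second step is to apply Hirschman's refinement of the Hadamard three-lines theorem to the subharmonic function $z \mapsto \log\|G(z)^{k}\|_{p(z)}$. Here the Schatten exponent $p(z)$ is interpolated in the usual Riesz--Thorin/Stein fashion, on a domain whose boundary is the half-circle $\{\tfrac12 e^{i\phi} : |\phi| \le \pi/2\}$, completed by a segment of the imaginary axis where $G$ is a product of unitaries and so contributes only dimensional factors. Hirschman's inequality expresses $\log\|\cdot\|$ at an interior point as at most the harmonic-measure average of its boundary values. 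That harmonic measure, restricted to and renormalised on the arc, supplies $\nu$ on $[-\pi/2,\pi/2]$. The interior evaluation point is placed on the real axis so that the product collapses, after the Trotter limit, to a multiple of $\sum_t H_t$. The constant $\pi/4$ should then appear as the ratio between the position of that interior point and the weight the harmonic measure gives to the arc.

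The third step is bookkeeping with dimensions. We need to convert the interpolated Schatten norm at the interior point into a trace of $\exp(\frac{\pi}{4}\sum_t H_t)$, and the Schatten norms on the arc into Frobenius norms. The conversion uses $\|A\|_p \le m^{1/p-1/q}\|A\|_q$ together with the fact that the imaginary-axis part has operator norm $1$ and Schatten norms bounded by powers of $m$. Collecting these powers of $m$ should give the prefactor $m^{1-\pi/4}$. Finally, Jensen's inequality ($\int \log \le \log\int$) removes the logarithm and yields \eqref{eq:GLSS}.

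The main obstacle is the second step: choosing the domain, the interior point and the family $p(z)$ so that the constants come out exactly as $\pi/4$ and $m^{1-\pi/4}$. I would first try the explicit Poisson kernel of the half-disc, or equivalently the strip $0 \le \Re w \le 1$ mapped by a conformal map, and compute the harmonic measure of the arc at the chosen real point. If that does not produce $\pi/4$ cleanly, I would fall back on citing the Sutter--Berta--Tomamichel inequality in its strip form and performing the change of variables to the half-circle directly.
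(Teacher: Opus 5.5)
\textbf{Gaps in steps 1--3.} The paper does not prove this statement; it imports it from \citet{GLSS}, where it appears in the logarithmic form $\log\tr e^{\sum_t H_t}\le\frac4\pi\int\log\tr(\cdots)\,d\nu$. The displayed version follows from that by Jensen and by $\tr A^{\pi/4}\le m^{1-\pi/4}(\tr A)^{\pi/4}$ for $A\succeq 0$, and this is the only source of the factor $m^{1-\pi/4}$.

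Your route is the right mechanism: Hirschman interpolation on the half-disc $\{|z|<\tfrac12,\ \operatorname{Re}z>0\}$, followed by a limit at a real interior point. Carried out correctly, it even gives the bound with constant $1\le m^{1-\pi/4}$. However, two of your steps fail as written.

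\emph{Interpolating $G(z)^k$.} This puts $\|G(\tfrac12 e^{i\phi})^k\|_{\mathrm F}$ on the arc, which is not the integrand you need. You cannot recover $\|G(\tfrac12 e^{i\phi})\|_{\mathrm F}$ from it, since H\"older would require the exponent $2/k<1$. You must interpolate $G$ itself and let the interior Schatten exponent supply the large power.

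\emph{The exponent on the imaginary segment.} It must be $p=\infty$; there $G(it)$ is unitary and contributes $\log 1=0$. With any finite exponent on the segment, $p(\theta)$ stays bounded as $\theta\downarrow 0$ and $\tr|G(\theta)|^{p(\theta)}\to m$. Only a first-order inequality then survives, with $\log m+\frac{\pi}{4m}\tr\sum_t H_t$ in place of $\log\tr e^{\frac\pi4\sum_t H_t}$. So there are no powers of $m$ to collect, and trying to produce $m^{1-\pi/4}$ from Schatten-norm comparisons is a dead end.

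\textbf{How to complete it.} Put $p=\infty$ on the segment and $p=2$ on the arc, and transfer the strip version of Hirschman's bound by a conformal map. The harmonic measure of the arc seen from $\theta\in(0,\tfrac12)$ is $\omega(\theta)=\frac4\pi\arctan(2\theta)$, so $1/p(\theta)=\omega(\theta)/2$. Hirschman's bound then becomes
\[
\log\tr\big(G(\theta)^*G(\theta)\big)^{1/\omega(\theta)}\le\int\log\tr\Big(G(\tfrac12e^{i\phi})\,G(\tfrac12e^{i\phi})^*\Big)\,\nu_\theta(d\phi),
\]
where $\nu_\theta$ is the harmonic measure of the arc normalised by $\omega(\theta)$.

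Now let $\theta\downarrow0$.
\begin{itemize}
\item $\nu_\theta\to\nu(d\phi)=\tfrac12\cos\phi\,d\phi$ weakly, and the integrand is continuous on the arc, so the right side converges.
\item $\log\big(G(\theta)^*G(\theta)\big)=2\theta\sum_tH_t+O(\theta^2)$ and $2\theta/\omega(\theta)\to\pi/4$, so $\big(G(\theta)^*G(\theta)\big)^{1/\omega(\theta)}\to e^{\frac\pi4\sum_tH_t}$.
\end{itemize}
This elementary limit replaces your Lie--Trotter step, and Jensen then finishes. As a check, for scalars the logarithmic inequality is an equality, since $\int\cos\phi\,\nu(d\phi)=\pi/4$.
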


Inequality \eqref{eq:GLSS} allows us to upper bound the matrix mgf as a trace of products of matrix exponentials, which is somewhat more similar to the traditional mgf for a sum of independent scalar random variables. We will obtain bounds on this quantity in two alternative regimes:
\begin{enumerate}[(A)]
\item Lipschitz observables and Wasserstein-contractive Markov operators on compact state spaces (\cref{sec:curv});
\item observables bounded in operator norm and Markov operators contractive from $L^2(\mu_t) \to L^2(\mu_{t-1})$ on general spaces (\cref{sec:spectral}).
\end{enumerate}

We employ the same proof strategy in both regimes, which generalises the ``direct method'' of \citet{L:chernoff-markov-disc} to the inhomogeneous and noncommutative setting. Here, ``direct'' is in contrast to proof strategies requiring results in the perturbation theory of linear operators, such as the original Markov chain Chernoff bound by \citet{gillman}.

Fix \(n\ge 1\), \(\phi\in[-\pi/2,\pi/2]\), and \(s\ge 0\).
Define matrices
\[
W_t(x):=\exp\!\Big(\frac12 e^{i\phi}\,s\,\widetilde F_t(x)\Big),
\qquad
W_t(x)^\ast=\exp\!\Big(\frac12 e^{-i\phi}\,s\,\widetilde F_t(x)\Big),
\]
where $\widetilde F_t \coloneq F_t - \mu_t(F_t)$,
and their products
\[
M_j:=W_1(X_1)\,W_{2}(X_{2})\cdots W_{j}(X_{j}),\qquad j=1,\dots,n,
\]
with \(M_0=\Id\).
Let
\begin{equation}\label{eq:an_def}
a_j(\phi):=\E\tr(M_jM_j^\ast).
\end{equation}

Our goal is to bound $\sup_{\phi} a_n(\phi)$: we do this by first establishing a recursive inequality,
which is the time-inhomogeneous matrix version of the recursive identity by \cite{L:chernoff-markov-disc}.
The key idea is, for any $1 \le i \le n$, to centre
the term on which each successive operator $P_i$ acts. This will allow us to exploit the contractive properties
(in Lipschitz or $L^2$ norm) of $P_i$.

\begin{restatable}{proposition}{Renewal}\label[proposition]{prop:renewal}
Let \(\phi\in[-\pi/2,\pi/2]\), \(\gamma:=\cos\phi\in[0,1]\), and $n \ge 1$.
Define, for $1 \le i \le n$, $B_{n,i} \in \Hm$ and  $\Theta_{n,i}, H_{n,i} \colon \Omega \to \Hm$
as follows:
\begin{align*}
\Theta_{n,1}(x) &:= \exp\!\big(s\gamma\,\widetilde F_n(x)\big),
\qquad
B_{n,1}:=\E\big[\Theta_{n,1}(X_n)\big],
\qquad
H_{n,1}(x):=\Theta_{n,1}(x)-B_{n,1}, \\
\Theta_{n,i}(x) &:= W_{n-i+1}(x)\,\big(P_{n-i+2}H_{n,i-1}\big)(x)\,W_{n-i+1}(x)^\ast,
\qquad
B_{n,i}:=\E\big[\Theta_{n,i}(X_{n-i+1})\big], \\
H_{n,i}(x) &:= \Theta_{n,i}(x)-B_{n,i},\qquad i=2,\dots,n.
\end{align*}
Let $b_{n,i}(\phi)=\|B_{n,i}\|_{\op} \, (1 \le i \le n)$. Then,
\begin{equation}\label{eq:renewal_ineq}
a_n(\phi)\ \le\ \sum_{i=1}^n b_{n,i}(\phi)\,a_{n-i}(\phi).\end{equation}
\end{restatable}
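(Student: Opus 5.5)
The plan is to peel off the innermost factor of $M_nM_n^\ast$ one time step at a time. At each step we condition on the past, apply the Markov kernel, and split the resulting matrix function into its mean $B_{n,i}$ plus a centred remainder $H_{n,i}$. The mean part produces the term $b_{n,i}a_{n-i}$, and the remainder is pushed one step further back.

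\textbf{First step.} Since $W_n(x)$ and $W_n(x)^\ast$ are functions of the same Hermitian matrix $\widetilde F_n(x)$, they commute. Hence
\[
W_n(x)W_n(x)^\ast=\exp\big(\tfrac12(e^{i\phi}+e^{-i\phi})s\widetilde F_n(x)\big)=\Theta_{n,1}(x).
\]
This gives $M_nM_n^\ast=M_{n-1}\Theta_{n,1}(X_n)M_{n-1}^\ast$. Write $\Theta_{n,1}=B_{n,1}+H_{n,1}$. For the constant part, use the inequality
\[
\tr(M B M^\ast)=\tr(B M^\ast M)\le \|B\|_\op\tr(M^\ast M),
\]
valid for Hermitian $B$ since $M^\ast M\succeq0$. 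Taking expectations of this part yields $b_{n,1}a_{n-1}$. For the remainder, condition on $\mathcal F_{n-1}=\sigma(X_0,\dots,X_{n-1})$. By the Markov property,
\[
\E[H_{n,1}(X_n)\mid\mathcal F_{n-1}]=(P_nH_{n,1})(X_{n-1}),
\]
so the remainder contributes $\E\tr\big(M_{n-1}(P_nH_{n,1})(X_{n-1})M_{n-1}^\ast\big)$.

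\textbf{Induction.} We claim that for each $k$,
\[
a_n\le\sum_{i=1}^{k}b_{n,i}a_{n-i}+\E\tr\big(M_{n-k}\,(P_{n-k+1}H_{n,k})(X_{n-k})\,M_{n-k}^\ast\big).
\]
For the inductive step, write $M_{n-k}=M_{n-k-1}W_{n-k}(X_{n-k})$. The remainder term then becomes $\E\tr\big(M_{n-k-1}\Theta_{n,k+1}(X_{n-k})M_{n-k-1}^\ast\big)$, by the definition of $\Theta_{n,k+1}$. Now split $\Theta_{n,k+1}=B_{n,k+1}+H_{n,k+1}$ and repeat exactly the same two moves as in the first step.

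For the trace inequality we need $B_{n,k+1}$ to be Hermitian. This holds because $H_{n,k}$ is Hermitian-valued, $P$ preserves Hermitian-valuedness, and conjugation $W(\cdot)W^\ast$ preserves it too. All of these objects should also be integrable, which is fine under the boundedness assumptions on the $F_t$ implicitly needed for the exponentials to be finite.

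\textbf{Termination.} When $k=n$ we have $M_0=\Id$. The remainder is then $\E\,\tr\big(H_{n,n}(X_1)\big)$ (the final conditioning step from $X_1$ to $X_0$ is not needed). This vanishes because $H_{n,n}(X_1)=\Theta_{n,n}(X_1)-\E\Theta_{n,n}(X_1)$ is centred. This closes the recursion with indices matching $a_{n-i}$, $i=1,\dots,n$.

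\textbf{Main obstacle.} The delicate point is the bookkeeping of indices and conditioning. The tower property must be applied with the correct kernel $P_{n-i+2}$ acting on $H_{n,i-1}$, which is a function of $X_{n-i+2}$, given $X_{n-i+1}$. It must also be verified that $\tr(MBM^\ast)\le\|B\|_\op\tr(MM^\ast)$ is legitimate even though $M$ is not normal. This follows from the cyclicity of the trace together with the fact that $\tr(M^\ast M)=\tr(MM^\ast)$.
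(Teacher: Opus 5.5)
Your proposal is correct and follows the paper's proof essentially step by step. You use $W_nW_n^\ast=\Theta_{n,1}$, split each $\Theta_{n,i}=B_{n,i}+H_{n,i}$, push the centred remainder back one step via the Markov property so that it becomes $\Theta_{n,i+1}$, and note that the final remainder $\E\,\tr H_{n,n}(X_1)$ vanishes. The only cosmetic difference is that the paper first derives the exact identity $a_n=\sum_{i}\E\tr(M_{n-i}B_{n,i}M_{n-i}^\ast)$ and applies $\tr(MBM^\ast)\le\|B\|_{\op}\tr(M^\ast M)$ once at the end, whereas you apply that bound inside the induction.
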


The next lemma shows that a bound on the sum of the coefficients $(b_{n,i})_{i=1}^n$ translates into a bound on the matrix moment generating function.

\begin{restatable}{lemma}{TraceBound}\label[lemma]{lem:close}
Assume that for some \(C\ge 0\), for all \(n\ge 1\) and all \(\phi \in [-\pi/2,\pi/2]\),
\begin{equation}\label{eq:beta_sum}
\sum_{i=1}^n b_{n,i}(\phi)\ \le\ 1 + C s^2.
\end{equation}
Then for all \(n\ge 0\), \(a_n(\phi)\le m\,(1+Cs^2)^n\le m\,\exp(Cns^2)\).
Therefore,
\begin{equation}\label{eq:mgf_generic}
\E_{\mu_0}\tr\exp\!\Bigl(\frac{\pi}{4}\sum_{j=1}^n s\,\widetilde F_j(X_j)\Bigr)\le m^{2-\pi/4}\exp\!\Big(Cn s^2\Big).
\end{equation}
\end{restatable}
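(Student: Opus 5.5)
The argument has two parts: a strong induction on $n$ using the renewal inequality of \cref{prop:renewal}, followed by an application of \cref{thm:GLSS} with rescaled matrices.

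\textbf{Induction.} The base case is $a_0(\phi)=\tr(\Id)=m$. Assume $a_k(\phi)\le m(1+Cs^2)^k$ for all $k<n$. Since $b_{n,i}(\phi)=\|B_{n,i}\|_{\op}\ge 0$, \cref{prop:renewal} and \eqref{eq:beta_sum} give
\[
a_n(\phi)\le \sum_{i=1}^n b_{n,i}(\phi)\,m(1+Cs^2)^{n-i}\le m(1+Cs^2)^{n-1}\sum_{i=1}^n b_{n,i}(\phi)\le m(1+Cs^2)^{n}.
\]
The middle step uses $(1+Cs^2)^{n-i}\le(1+Cs^2)^{n-1}$, which holds because $C s^2\ge 0$. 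The bound $(1+x)^n\le e^{nx}$ then gives the exponential form.

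\textbf{Applying \cref{thm:GLSS}.} Fix a realisation and set $H_t:=s\widetilde F_t(X_t)\in\Hm$. Each $H_t$ is Hermitian because $\mu_t(F_t)$ is an expectation of Hermitian matrices, hence Hermitian. The integrand in \eqref{eq:GLSS} is then
\[
\tr\Big(\prod_{t=1}^n e^{\frac12 e^{i\phi}H_t}\prod_{t=n}^1 e^{\frac12 e^{-i\phi}H_t}\Big)=\tr(M_nM_n^\ast),
\]
since $(e^{\frac12 e^{i\phi}H_t})^\ast=e^{\frac12 e^{-i\phi}H_t}$. In particular the integrand is real and nonnegative. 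Taking expectations, Fubini/Tonelli applies: the measure $\nu$ does not depend on the $H_t$ in the statement of \cref{thm:GLSS}, and the integrand is nonnegative and measurable. This gives
\[
\E\tr\exp\Big(\frac{\pi}{4}\sum_{j=1}^n s\widetilde F_j(X_j)\Big)\le m^{1-\pi/4}\int a_n(\phi)\,\nu(d\phi)\le m^{1-\pi/4}\cdot m\,e^{Cns^2},
\]
which is \eqref{eq:mgf_generic}.

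\textbf{Main point to check.} The step needing care is the dependence of $\nu$. If $\nu$ is universal, as stated, the interchange of expectation and integral is immediate. If $\nu$ were allowed to depend on the matrices, we would instead bound the $\nu$-integral by $\sup_\phi\tr(M_nM_n^\ast)$ pointwise. That route does not combine directly with bounds on the expectations $a_n(\phi)$, so I will rely on the theorem as stated, with $\nu$ fixed. Everything else is routine, and nonnegativity of the $b_{n,i}$ is what makes the induction monotone.
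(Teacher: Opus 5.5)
Your proposal is correct and follows essentially the paper's proof. The paper runs the same induction from the renewal inequality, phrased as $a_n(\phi)\le(1+Cs^2)\max_{0\le k\le n-1}a_k(\phi)$. It applies the GLSS inequality with $H_j=s\widetilde F_j(X_j)$ in the same way, relying on the same fixed $\nu$ to exchange expectation and integral, and then bounds the $\nu$-integral by $\sup_\phi a_n(\phi)$ rather than integrating the uniform bound, which makes no difference.
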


The previous lemma, together with the standard matrix Chernoff bound argument of \cref{eq:laplace_maxeig} and \cref{thm:GLSS}, implies that we only need to control the coefficients $(b_{n,i})_{i=1}^n$ to obtain a concentration inequality. We outline how to do that in two different regimes: (A) for positively curved chains (\cref{sec:curv}); (B) for chain contractive in $L^2(\mu_t) \to L^2(\mu_{t-1})$ (\cref{sec:spectral}).

%% file: curv.tex
\subsection{Regime (A): Positive curvature and Lipschitz observables}
\label{sec:curv}
We consider time-inhomogeneous chains with positive curvature, bounded diameter, and Lipschitz observables.
Throughout this section, we assume our state space $\Omega$ is a Polish space with a metric $d$.
We recall the definition of Wasserstein-1 distance.

 \begin{definition}[Wasserstein-1 distance]
Let $(\Omega, d)$ be a Polish metric space and let $\mu,\nu$ be two probability measures on $\Omega$ with finite first moment.
The \emph{Wasserstein-1 distance} between $\mu$ and $\nu$ is
\[
    W_1(\mu,\nu)
    \;:=\;
    \inf_{\pi \in \Pi(\mu,\nu)}
    \int_{\Omega\times \Omega} d(x,y)\,\mathrm{d}\pi(x,y),
\]
where $\Pi(\mu,\nu)$ is the set of all couplings of $\mu$ and $\nu$,
i.e., all probability measures $\pi$ on $\Omega\times\Omega$
with marginals $\mu$ and $\nu$.
\end{definition}

By Kantorovich–Rubinstein  duality, we have:
\[
    W_1(\mu,\nu)
    \;=\;
    \sup_{\Lip(f) \le 1}
    \biggl(
        \int_{\Omega} f(x)\,\mathrm{d}\mu(x)
        -
        \int_{\Omega} f(x)\,\mathrm{d}\nu(x)
    \biggr),
\]
where the Lipschitz seminorm of $f:\Omega\to\mathbb{R}$ is
defined by
\[
    \Lip(f)
    \;:=\;
    \sup_{x \neq y}
    \frac{|f(x)-f(y)|}{d(x,y)}.
\]

We also generalise the definition of Lipschitzness to matrix-valued functions as follows.
\begin{definition}[Operator Lipschitz seminorm]\label{def:Lip_op}
For \(F:\Omega\to\mathbb{C}^{m \times m}\),
\[
\Lip(F)^{\op}:=\sup_{x\neq y}\frac{\|F(x)-F(y)\|_{\op}}{d(x,y)}.
\]
\end{definition}

We follow \citet{OllivierJFA} and define the curvature of a Markov chain as the worst case contraction in Wasserstein distance after one step.  Since we are dealing with time-inhomogeneous Markov chains, we specialise our notion for each time step.

\begin{definition}[Ollivier curvature]
\label[definition]{def:curvature}
For any integer $t \ge 1$, we define the curvature at time $t$ as
\[
    \kappa_t
    \;:=\;
    \inf_{x \ne y} 1 - \frac{W_1(P_t(x, \cdot), P_t(y, \cdot))}{d(x,y)}.
\]
\end{definition}

By Kantorovich–Rubinstein duality, a positive curvature implies that the averaging operator is contractive in the Lipschitz seminorm:
\[
\Lip(P_tf) \le (1-\kappa_t) \Lip(f).
\]

Since the curvature of the chain can vary greatly between steps, we only assume each step of the Markov chain of interest has only non-negative curvature and we assume strictly positive curvature ``on average''.

\begin{assumption}[Positive curvature]\label[assumption]{ass:curv}
For all \(t\ge 1\), we have \(\kappa_t \in [0,1]\). Furthermore, there exists \(\kappa \in (0,1]\) such that, for all \(t\ge 1\),
\begin{equation}\label{eq:effective_kappa}
1 + \sum_{k=1}^{t}\ \prod_{\ell=k}^{t}(1-\kappa_\ell)\ \le\ \frac{1}{\kappa}.
\end{equation}
\end{assumption}

Notice the average curvature is never worse than the worst-case curvature: $\kappa \ge \inf_t \kappa_t$.
In particular, a chain with occasional small curvature can still have geometric contraction on average.

In this section, we will also have to assume a finite diameter and Lipschitz observables.

\begin{assumption}[Finite diameter]\label[assumption]{ass:diam}
$\Omega$ has diameter \(D:=\sup_{x,y\in\Omega} d(x,y) < \infty\).
\end{assumption}

\begin{assumption}[Lipschitz observables]\label[assumption]{ass:lip_obs}
There exists \(L>0\) such that
\(
\Lip(F_t)^{\op}\le L \text{ for all }t.
\)
\end{assumption}

We remark it is possible to refine our results to allow for a time-varying Lipschitz parameter, but we choose to assume a uniform Lipschitz parameter for simplicity.

We are now ready to state the main result of this section.

\begin{restatable}{theorem}{ChernoffCurv}\label[theorem]{thm:curv}
Suppose Assumptions \ref{ass:curv}, \ref{ass:diam}, \ref{ass:lip_obs} are satisfied. Let \(v^2:=\frac{192}{\pi^2}\frac{L^2D^2}{\kappa}\).
Then, for all \(\varepsilon\ge 0\),
\[
\Pbbmu\Big(\lmax\Big(\sum_{j=1}^n  (F_j(X_j) - \E F_j(X_j)) \Big)\ge n\varepsilon\Big)
\le m^{\,2-\pi/4}\,
\exp\left(-\frac{n\varepsilon^2}{2v^2}\right).
\]
\end{restatable}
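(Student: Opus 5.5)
The plan is to verify the hypothesis \eqref{eq:beta_sum} of \cref{lem:close} with $C = c_0 L^2D^2/\kappa$ for an absolute constant $c_0$, valid for all $s$ in a range $0\le s\le c_1\kappa/(LD)$. Then \cref{lem:close}, \eqref{eq:laplace_maxeig} with the rescaling $\frac{\pi}{4}s$, and \cref{thm:GLSS} give
\[
\Pbbmu\Big(\lmax\Big(\sum_j \widetilde F_j(X_j)\Big)\ge n\varepsilon\Big)\le m^{2-\pi/4}\exp\big(-\tfrac{\pi}{4}sn\varepsilon+Cns^2\big).
\]
Optimising at $s=\pi\varepsilon/(8C)$ yields the stated Gaussian tail, and tracking constants should produce $v^2=\frac{192}{\pi^2}L^2D^2/\kappa$. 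The restriction on $s$ is harmless for the following reason. As noted below, $\|\widetilde F_j(x)\|_\op\le LD$, so the event is empty when $\varepsilon>LD$. For $\varepsilon\le LD$ the optimal $s\asymp \varepsilon\kappa/(L^2D^2)$ is at most of order $\kappa/(LD)$, so after fixing constants it lies in the admissible range.

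\textbf{Step 1: sup bounds from centring.} The key observation is that if $G$ has $\int G\,d\mu=0$, then $\|G(x)\|_\op=\|\int (G(x)-G(y))\mu(dy)\|_\op\le D\,\Lip(G)^\op$. Applied to $\widetilde F_t$ this gives $\|\widetilde F_t\|_\infty\le LD$, so $\|W_t\|_\infty\le e^{sLD/2}$ and $\Lip(W_t)^\op\le \frac{s}{2}Le^{sLD/2}$ (derivative of the exponential along segments). Next, $H_{n,i-1}$ is centred under $\mu_{n-i+2}$, so $P_{n-i+2}H_{n,i-1}$ is centred under $\mu_{n-i+1}$. Contraction of $P_t$ in the Lipschitz seminorm extends to operator-valued functions: test against $u^\ast(\cdot)v$ for unit vectors $u,v$ and apply Kantorovich–Rubinstein. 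Hence
\[
\|P_{n-i+2}H_{n,i-1}\|_\infty\le D(1-\kappa_{n-i+2})\,\ell_{i-1}, \qquad \ell_i:=\Lip(H_{n,i})^\op .
\]

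\textbf{Step 2: bounding $b_{n,i}$.} For $i=1$, write $A=s\gamma\widetilde F_n(X_n)$. Since $\E A=0$ and $e^A\preceq I+A+\frac12A^2e^{\|A\|}$, we get $b_{n,1}\le 1+\frac12 s^2L^2D^2e^{sLD}$. For $i\ge2$, expand $W=I+R$ with $\|R\|\le \frac{s}{2}LDe^{sLD/2}$. The zeroth-order term $\E[P H](X_{n-i+1})$ vanishes by centring. This leaves
\[
b_{n,i}\le \big(2\|R\|_\infty+\|R\|_\infty^2\big)\,D(1-\kappa_{n-i+2})\,\ell_{i-1}\lesssim sLD^2(1-\kappa_{n-i+2})\,\ell_{i-1}.
\]

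\textbf{Step 3: Lipschitz recursion (main obstacle).} By the product rule,
\[
\ell_i\le 2\Lip(W)\,\|W\|_\infty\,\|PH\|_\infty+\|W\|^2_\infty(1-\kappa)\,\ell_{i-1}\le e^{sLD}(1+sLD)(1-\kappa_{n-i+2})\,\ell_{i-1},
\]
with $\ell_1\le sLe^{sLD}$. The danger is the per-step amplification factor $e^{O(sLD)}$, which compounds over $i$ steps. Here the restriction $s\le c_1\kappa/(LD)$ enters. I would like to argue that the amplification $e^{O(i\kappa)}$ is dominated by the products $\prod(1-\kappa_\ell)$. Under only the averaged \cref{ass:curv} this is delicate. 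My first attempt is to absorb the factors in weighted sums rather than termwise: bound
\[
\sum_{i\ge2}\prod_{\ell}\big[(1-\kappa_\ell)e^{c sLD}\big]
\]
by exploiting \eqref{eq:effective_kappa} together with the small factor $c\,sLD\le c\,c_1\kappa$. If that fails, I would prove a sharper contraction of $W(\cdot)W^\ast$ directly.

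Granting this, $\ell_{i-1}\lesssim sL\prod_{\ell=n-i+3}^{n}(1-\kappa_\ell)$. Summing over $i$ and using \eqref{eq:effective_kappa} gives
\[
\sum_i b_{n,i}\le 1+c_0s^2L^2D^2\Big(1+\sum_{k}\prod_{\ell=k}^{n}(1-\kappa_\ell)\Big)\le 1+c_0 s^2L^2D^2/\kappa,
\]
which is \eqref{eq:beta_sum}.
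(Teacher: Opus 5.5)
Your Steps 1--2, the Lipschitz recursion in Step~3, and the Chernoff/optimisation wrap-up are sound and parallel the paper. Expanding $W_\tau=\Id+R$ instead of centring $W_\tau$ at $\mu_\tau W_\tau$ is a valid variant, and it yields the same shape of bound as \cref{lem:coeff_inhomW}, namely $b_{n,i}\lesssim s^2L^2D^2\prod_{\ell=n-i+2}^{n}e^{2sLD}(1-\kappa_\ell)$.

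The gap is the absorption of the tilt in Step~3. This is the crux of the theorem, and you only ``grant'' it. The termwise form you grant, $\ell_{i-1}\lesssim sL\prod_{\ell=n-i+3}^{n}(1-\kappa_\ell)$, does not follow from your recursion. The recursion carries an extra factor $\bigl(e^{sLD}(1+sLD)\bigr)^{i-2}$, which is unbounded in $i$. Even when all $\kappa_\ell=\kappa$, termwise you only get a product of factors $1-c\kappa$ with $c<1$, not $1-\kappa_\ell$. Under the averaged \cref{ass:curv} the tilt cannot be absorbed step by step at all: runs of up to about $1/\kappa$ consecutive steps with $\kappa_\ell=0$ are allowed, and on those steps $e^{2sLD}(1-\kappa_\ell)>1$. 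Only the sum over $i$ can be controlled.

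What is missing is the tilted renewal estimate of \cref{lem:weighted_renewal}: for $1\le r\le e^{\pi\kappa/24}$,
\[
\sum_{i=1}^n r^{i-1}\prod_{\ell=n-i+2}^{n}(1-\kappa_\ell)\le \frac{3}{\kappa}.
\]
Your instinct to absorb the tilt at the level of weighted sums is the right one. However, the argument must use \eqref{eq:effective_kappa} at \emph{every} time $t$, not only at $t=n$. Set $u_\ell=1-\kappa_\ell$, and let $a_t=\sum_{k=1}^{t}\prod_{\ell=k}^{t}u_\ell$ and $b_t=\sum_{i=1}^{t}r^{i-1}\prod_{\ell=t-i+2}^{t}u_\ell$ be the untilted and tilted partial sums. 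They satisfy $a_t=u_t(1+a_{t-1})$ and $b_t=1+ru_tb_{t-1}$. The induction hypothesis $b_{t-1}\le\frac32(1+a_{t-1})$ then gives $b_t\le 1+\frac32 ra_t$. This is at most $\frac32(1+a_t)$ iff $(r-1)a_t\le\frac13$, which holds because $a_t\le 1/\kappa$ for every $t$ and $r-1\le e^{\pi\kappa/24}-1<\kappa/3$.

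This also fixes the admissible window $2sLD\le\pi\kappa/24$, i.e.\ $s\le\pi\kappa/(48LD)$. That window is exactly what makes your choice $s=\pi\varepsilon/(8C)$ with $C=6L^2D^2/\kappa$ admissible for all $\varepsilon\le LD$, and it produces $v^2=32C/\pi^2=\frac{192}{\pi^2}\frac{L^2D^2}{\kappa}$. Without this lemma, or an equivalent sum-level argument, your proof only covers a uniform lower bound $\kappa_\ell\ge\kappa$, not the averaged curvature assumption of the statement.
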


If we have additional control on the  observables' oscillation, we can improve the dependency on the diameter in the variance proxy to logarithmic. For technical reasons, we will  require a uniform lower bound on the curvature, rather than working with the effective curvature of \cref{ass:curv}.

For $F \colon \Omega \to \mathbb{C}^{m \times m}$ we define the
\emph{oscillation} of $F$ as
\(
\osc(F) \coloneqq \sup_{x,y\in\Omega}\|F(x)-F(y)\|_{\op}.
\)
We denote with $\Delta_\op$ the largest oscillation: $\Delta_\op \coloneqq \sup_t \osc(F_t)$. Notice $\Delta_\op \in (0,LD]$.

\begin{assumption}[Uniform curvature] \label[assumption]{ass:unif_kappa}
There exists $\kappa > 0$ such that, for all $t$, $\kappa_t \ge \kappa$.
\end{assumption}

\begin{restatable}{theorem}{ChernoffCurvDiam}\label[theorem]{thm:curv_diam}
Suppose Assumptions \ref{ass:diam}, \ref{ass:lip_obs}, and \ref{ass:unif_kappa}  are satisfied. 
Let $
\overline v^2:=\frac{3200}{\pi^2}\,
\Delta_{\op}^2\kappa^{-1}\left(1+\log\frac{LD}{\Delta_{\op}}\right).
$
Then, for every \(\varepsilon>0\),
\[
\Pbbmu\Big(\lmax\Big(\sum_{j=1}^n (F_j(X_j) - \E F_j(X_j))\Big)\ge n\varepsilon\Big)
\le m^{\,2-\pi/4}\,
\exp\left(-\frac{n\varepsilon^2}{2\overline v^2}\right).
\]
\end{restatable}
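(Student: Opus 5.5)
Following the route of \cref{thm:curv}, we combine \cref{thm:GLSS}, \cref{prop:renewal} and \cref{lem:close} with the Chernoff step \eqref{eq:laplace_maxeig}. The task is then to verify \eqref{eq:beta_sum} with $C$ of order $\Delta_\op^2\kappa^{-1}(1+\log\frac{LD}{\Delta_\op})$. Once $\sum_i b_{n,i}(\phi)\le 1+Cs^2$ holds for $s$ in a suitable range, \eqref{eq:mgf_generic} gives $\Pbb(\lmax(\sum_j\widetilde F_j(X_j))\ge n\varepsilon)\le m^{2-\pi/4}\exp(-\frac{\pi}{4}sn\varepsilon+Cns^2)$. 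Optimising over $s$ then produces the stated sub-Gaussian bound, provided the optimal $s$ lies in the admissible range; for $\varepsilon$ beyond that range the bound is trivial, since $\lmax(\sum_j \widetilde F_j)\le n\Delta_\op$ almost surely.

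To bound the coefficients, expand $\Theta_{n,i}$. Each $W_t=\exp(\frac12 e^{i\phi}s\widetilde F_t)$ satisfies $\|W_t-\Id\|_\op\lesssim s\Delta_\op$, because $\|\widetilde F_t\|_\infty\le\Delta_\op$. It also satisfies $\Lip(W_t)^\op\lesssim sL$ when $s\Delta_\op\le 1$. First, $b_{n,1}=\|\E e^{s\gamma\widetilde F_n}\|_\op\le 1+O(s^2\Delta_\op^2)$, by a second-order expansion that uses $\E\widetilde F_n(X_n)=0$. For $i\ge2$, write $G_i:=P_{n-i+2}H_{n,i-1}$, so that $\Theta_{n,i}=W G_i W^\ast$ and $\E G_i(X_{n-i+1})=0$ since $H_{n,i-1}$ is centred under $\mu_{n-i+2}$. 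Expanding $W=\Id+\frac12 e^{i\phi}s\widetilde F+O(s^2\Delta_\op^2)$ kills the zeroth-order term in expectation. This leaves
\[
b_{n,i}\lesssim s\Delta_\op\,\|G_i\|_\infty .
\]
The first-order term is $\E[\widetilde F G_i+G_i\widetilde F]$ and the second-order terms are bounded similarly. Two bounds on $\|G_i\|_\infty$ are then available. The \emph{oscillation bound} is $\|H_{n,i}\|_\infty\le\osc(\Theta_{n,i})$, and controlling this requires tracking the sup norm. The \emph{Lipschitz bound} is $\|G_i\|_\infty\le \osc(G_i)\le D\,\Lip(G_i)^\op\le D(1-\kappa)\Lip(H_{n,i-1})^\op$, using centring and curvature; by induction $\Lip(H_{n,i})^\op\lesssim sL\sum_k(1-\kappa)^k\cdot(\text{size})$ decays geometrically.

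The key step, and the main obstacle, is balancing these two bounds. I would first establish that the sup norms decay. Taking $\|H_{n,i}\|_\infty\lesssim s\Delta_\op(1-\kappa)^{i-1}$ times something is too optimistic, since curvature does not contract sup norms directly. Instead I would prove a joint induction on the pair $(\|H_{n,i}\|_\infty,\Lip(H_{n,i})^\op)$. Conjugation by $W$ adds $O(s\Delta_\op\|G\|_\infty)$ to the sup norm and $O(sL\|G\|_\infty)$ to the Lipschitz constant. The terms with $i\le k:=\lceil\kappa^{-1}\log\frac{LD}{\Delta_\op}\rceil$ are handled crudely: each $b_{n,i}\lesssim s^2\Delta_\op^2$, since $\|G_i\|_\infty\lesssim s\Delta_\op$ up to constants that must be kept from compounding (this is where $s\Delta_\op$ small is needed). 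These terms sum to $s^2\Delta_\op^2 k$, which gives the factor $\kappa^{-1}\log\frac{LD}{\Delta_\op}$. For $i>k$, the Lipschitz bound gives $\|G_i\|_\infty\lesssim DsL(1-\kappa)^{i}\le s\Delta_\op(1-\kappa)^{i-k}$, and the geometric tail sums to $s^2\Delta_\op^2\kappa^{-1}$. Uniform curvature (\cref{ass:unif_kappa}) is what makes the cutoff $k$ independent of the time index. Adding the two pieces yields $C\approx c\,\Delta_\op^2\kappa^{-1}(1+\log\frac{LD}{\Delta_\op})$, and tracking constants should produce the factor $3200/\pi^2$.
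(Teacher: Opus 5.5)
Your reduction is sound: \cref{thm:GLSS}, \cref{prop:renewal}, \cref{lem:close}, the estimate $b_{n,i}\lesssim s\Delta_{\op}\|G_i\|_\infty$, and the crude treatment of $i\le k$ under $s\Delta_{\op}k\lesssim 1$ all work. The gap is the tail claim ``for $i>k$, $\|G_i\|_\infty\lesssim DsL(1-\kappa)^i$.'' It does not follow from a joint induction on $(\|H_{n,i}\|_\infty,\Lip(H_{n,i})^{\op})$ in the range of $s$ the theorem needs.

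Write $\epsilon:=s\Delta_{\op}$ and $R:=LD/\Delta_{\op}$. Your recursion is $\Lip(H_{n,i})^{\op}\le e^{\epsilon}\bigl[(1-\kappa)\Lip(H_{n,i-1})^{\op}+sL\|G_i\|_\infty\bigr]$, and the injected term is not contracted fast enough. If you use $\|G_i\|_\infty\approx\epsilon$, each step adds about $sL\epsilon$, so $D\,\Lip(H_{n,k})^{\op}\approx\epsilon+\epsilon^2R/\kappa$ rather than $\lesssim\epsilon$. If you use $\|G_i\|_\infty\le(1-\kappa)D\,\Lip(H_{n,i-1})^{\op}$, the per-step factor becomes $e^{\epsilon}(1-\kappa)(1+\epsilon R)$, which exceeds $1$ as soon as $\epsilon R>\kappa/(1-\kappa)$. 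So the Lipschitz route closes only when $sLD\lesssim\kappa$, the window of \cref{thm:curv}.

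That is not enough. The bound is trivial only for $\varepsilon>\Delta_{\op}$, so the optimiser $s^\ast\asymp\varepsilon/\overline v^2$ must be admissible for every $\varepsilon\le\Delta_{\op}$. This means $s$ up to order $\kappa/(\Delta_{\op}(1+\log R))$, a factor $R/(1+\log R)$ beyond $\kappa/(LD)$. In that range $\epsilon R\gg\kappa$, the bound your pair gives on $\|G_i\|_\infty$ never drops below order $\epsilon$, and the resulting bound on $\sum_i b_{n,i}$ grows with $n$. If you restrict to $s\lesssim\kappa/(LD)$, you get the right variance proxy only for $\varepsilon\lesssim\Delta_{\op}(1+\log R)/R$, and only a sub-exponential tail beyond. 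This is exactly the small sub-Gaussian window that a naive argument produces.

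The missing idea is this. One pair for all of $H_{n,i}$ gives you a min of sums, $\min\{\text{oscillation},\,D\cdot\text{Lipschitz}\}$. But the pieces of $H_{n,i}$ injected by the conjugations at different earlier steps each need their own $\min\{\text{oscillation},\ \text{distance}\times\text{Lipschitz}\}$, i.e.\ a sum of mins.

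The paper obtains this by tracking an increasing concave modulus $\psi_j$ with $\|H_{n,j}(x)-H_{n,j}(y)\|_{\op}\le\psi_j(d(x,y))$. Jensen on an optimal $W_1$-coupling shows that $P_t$ turns $\psi(r)$ into $\psi((1-\kappa)r)$ (\cref{lem:concave_profile_contract}); this is where uniform curvature is used. The conjugation is controlled via $\|W_\tau(x)-W_\tau(y)\|_{\op}\le\frac{\epsilon}{2}e^{\epsilon/2}\min\{1,Ld(x,y)/\Delta_{\op}\}$, which keeps each injected piece capped at size $\epsilon$ on large scales.

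The quantities $q_{j,1}:=\psi_j((1-\kappa)D)\ge\|G_{j+1}\|_\infty$ then satisfy a renewal identity with kernel $\epsilon e^{\epsilon\ell}\min\{1,R(1-\kappa)^\ell\}$. Its total mass is $\epsilon S$ with $S\le 10\kappa^{-1}(1+\log R)$. This gives $\sum_j q_{j,1}\le 2\epsilon S$ whenever $\epsilon\le\kappa/(20(1+\log R))$, which is precisely the window required.
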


The variance proxy in the sub-Gaussian regime is of the correct order, as shown in \cref{app:tight}.
The idea is to construct a (homogeneous) chain that has uniform stationary distribution on the interval $[0,D]$, $\Omega(1)$ curvature, and is supported on a finite set after a finite number of steps. This will make the spectral gap of the chain zero. Furthermore, the chain will forget its initial state in $\Omega(\log D)$ steps, making a dependency on the diameter unavoidable.

%% file: spectral.tex
\subsection{Regime (B): Inhomogeneous spectral gap and bounded observables}
\label{sec:spectral}

In this section we consider a general, potentially unbounded state space $\Omega$.
We do not require $\Omega$ to be Polish nor metric. We consider
 time-inhomogeneous Markov chains that have a positive spectral gap,
  where the spectral gap has a particular definition tailored to the inhomogeneous setting
  and previously studied by \citet{SCZ} (see \cref{def:sigma}).

 Besides this spectral condition, we require the observables to be bounded in the following sense.

\begin{assumption}[Bounded oscillation]\label[assumption]{ass:Delta}
Assume there exist constants $\Delta_{\op},\Delta_{\mathrm F}\in (0,\infty)$ such that, for all $t\ge 1$,
\(
\Delta_{\op} \coloneqq \sup_{t\ge 0,\, x,y\in\Omega}\|F_t(x)-F_t(y)\|_{\op}
\text{ and }
\Delta_{\mathrm F} \coloneqq \sup_{t\ge 0,\, x,y\in\Omega}\|F_t(x)-F_t(y)\|_{\mathrm F}.
\)
\end{assumption}
Notice that $\Delta_{\mathrm F} \le \sqrt{2 \sup_{t,x} \rank F_t(x)} \Delta_{\op}$.
In particular, if $F_t$ is a matrix dilation of some vector, $\rank F_t(x) = 2$ and $\Delta_{\mathrm F} \asymp \Delta_{\op}$.

Given a probability measure $\nu$ on $\Omega$, we define the $L^2(\nu)$ norm as
\(
\|f\|_{2,\nu}:=\Big(\int |f|^2\,d\nu\Big)^{1/2}.
\)
We say $f \in L^2(\nu)$ if $\|f\|_{2,\nu} < \infty$.
We denote with $L^2_0(\nu):=\{f\in L^2(\nu):\nu(f)=0\}$
the restriction of $L^2(\nu)$ to zero-mean functions.

\begin{definition}\label[definition]{def:sigma}
Let \(t\ge 1\). Since \(\mu_t=\mu_{t-1}P_t\), the Markov operator $P_t$
maps \(L^2(\mu_t)\) to \(L^2(\mu_{t-1})\). Define
the \emph{one-step (second) singular value}
\[
\sigma_t
:=\|P_t\|_{L^2_0(\mu_t)\to L^2_0(\mu_{t-1})}
=\sup\Big\{\|P_t f\|_{2,\mu_{t-1}}:\ \|f\|_{2,\mu_t}=1,\ \mu_t(f)=0\Big\}.
\]
Equivalently, for all \(f\in L^2(\mu_t)\) with \(\mu_t(f)=0\),
\begin{equation}\label{eq:L2_contract_scalar}
\|P_t f\|_{2,\mu_{t-1}}\le \sigma_t\,\|f\|_{2,\mu_t}.
\end{equation}
\end{definition}

It can be checked that $\sigma_t \in [0,1]$ (see \cref{prop:L2_contraction_basic}).
Essentially, $1-\sigma_t$ will play the role of the spectral gap of the operator $P_t$.
As done for curvature, we want this spectral gap to be strictly positive ``on average''.

\begin{assumption}[Positive spectral gap]\label[assumption]{ass:lambda}
We assume there exists \(\lambda \in (0,1]\) such that, for every \(t\ge 1\),
\(
\sum_{k=1}^{t+1}\ \prod_{\ell=k}^{t}\sigma_\ell\ \le\ \frac{1}{\lambda}.
\)
\end{assumption}

The main result of this section is as follows.
\begin{restatable}{theorem}{ChernoffSpec}\label{thm:spec}
Suppose Assumptions \ref{ass:Delta} and \ref{ass:lambda} are satisfied.
Let \(v_B^2:=\frac{768}{\pi^2}\,\frac{\Delta_{\op}\Delta_{\mathrm F}}{\lambda}\).
Then, for every \(\varepsilon \ge 0\),
\[
\Pbbmu\Big(\lmax\Big(\sum_{j=1}^n(F_j(X_j) - \E F_j(X_j))\Big)\ge n\varepsilon\Big)
\le m^{\,2-\pi/4}\,
\exp\left(-\frac{n\varepsilon^2}{2v_B^2}\right).
\]
\end{restatable}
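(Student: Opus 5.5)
The plan is to verify the hypothesis \eqref{eq:beta_sum} of \cref{lem:close} with $C$ of order $\Delta_\op\Delta_{\mathrm F}/\lambda$. Once that is done, the result follows from \eqref{eq:mgf_generic}, \eqref{eq:laplace_maxeig} applied to $Z=\frac{\pi}{4}s\sum_j\widetilde F_j(X_j)$, and optimisation over $s$. Concretely, taking $C=\frac{\pi^2}{16}\cdot\frac{v_B^2}{2}$ gives tail $m^{2-\pi/4}\exp(-\frac{\pi}{4}sn\varepsilon+Cns^2)$. Choosing $s=\pi\varepsilon/(8C)$ then yields $\exp(-n\varepsilon^2/(2v_B^2))$. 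So the whole task is to bound $\sum_i b_{n,i}(\phi)$ via the recursion in \cref{prop:renewal}.

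\textbf{First term.} Since $\mu_n(\widetilde F_n)=0$, expanding $e^{s\gamma\widetilde F_n}$ gives $b_{n,1}\le 1+O(s^2\Delta_\op^2)$, using $\|\widetilde F_n\|_\infty\le\Delta_\op$.

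\textbf{Terms with $i\ge2$.} Write $G_i:=P_{n-i+2}H_{n,i-1}$, a function of $x$ with values in $\Hm$. Expand $W(x)G_iW(x)^\ast=G_i+(W-I)G_i+G_i(W^\ast-I)+(W-I)G_i(W^\ast-I)$ with $W=W_{n-i+1}$. The zeroth-order term has expectation zero, because $H_{n,i-1}$ is $\mu_{n-i+2}$-centred and $\mu_{n-i+1}P_{n-i+2}=\mu_{n-i+2}$. Each remaining term contains a factor $W-I$ of operator norm $O(s\Delta_\op)$ (for $s\Delta_\op\lesssim1$). Hence $b_{n,i}\lesssim s\Delta_\op\,\E\|G_i(X_{n-i+1})\|_{\op}$. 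Bound $\|\cdot\|_\op$ by $\|\cdot\|_{\mathrm F}$ and apply the scalar $L^2$ contraction \eqref{eq:L2_contract_scalar} entrywise; summing squared entries, contraction holds for the Frobenius $L^2$ norm of centred matrix functions. This gives
\[
\E\|G_i\|_{\mathrm F}\le \|G_i\|_{L^2(\mu_{n-i+1};\mathrm F)}\le\sigma_{n-i+2}\|H_{n,i-1}\|_{L^2(\mu_{n-i+2};\mathrm F)}.
\]
Next, set $h_i:=\|H_{n,i}\|_{L^2(\mu_{n-i+1};\mathrm F)}$. Since $H_{n,i}$ is a centring of $\Theta_{n,i}$, and $\Theta_{n,i}$ equals $G_i$ plus terms of size $O(s\Delta_\op)$ times $G_i$, we get $h_i\le\sigma_{n-i+2}(1+O(s\Delta_\op))h_{i-1}$. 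For the base case, $h_1\lesssim s\Delta_{\mathrm F}$, because $\Theta_{n,1}-I$ has Frobenius norm $O(s\|\widetilde F_n\|_{\mathrm F})\le O(s\Delta_{\mathrm F})$. Iterating,
\[
b_{n,i}\lesssim s^2\Delta_\op\Delta_{\mathrm F}\prod_{\ell}\sigma_\ell\,(1+O(s\Delta_\op))^{i}.
\]
Summing over $i$ and using \cref{ass:lambda} gives $\sum_i b_{n,i}\le 1+O(s^2\Delta_\op\Delta_{\mathrm F}/\lambda)$.

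\textbf{Main obstacle.} The geometric factor $(1+O(s\Delta_\op))^{i}$ must not destroy the sum. I would first try to avoid multiplicative growth by keeping exact conjugation. Since $\|W\|_\op\le e^{s\gamma\Delta_\op/2}$ is not $1$ for $\gamma>0$, I would restrict to $s\lesssim\lambda/\Delta_\op$. In that range $\sigma_\ell e^{s\Delta_\op}$ still sums to $O(1/\lambda)$ by a perturbation of \cref{ass:lambda}. For large $s$, the bound $\exp(Cns^2)$ only needs to hold up to the optimal $s$ once $\varepsilon\le\Delta_\op$; for $\varepsilon$ larger than the range of $\widetilde F$ the probability vanishes trivially. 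Tracking constants to reach exactly $768/\pi^2$ is then routine bookkeeping.
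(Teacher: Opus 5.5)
Your proposal is correct and follows essentially the same route as the paper: it uses the renewal inequality with \cref{lem:close}, a bound $b_{n,i}\lesssim s\Delta_{\op}\,\mu_\tau(\|G\|_{\op})$ from $\mu_\tau(G)=0$, the entrywise lift of the $L^2(\mu_t)\to L^2(\mu_{t-1})$ contraction to the Frobenius norm, the recursion $\|H_{n,i}\|_{2,\mu_\tau}\le e^{s\Delta_{\op}}\sigma_{\tau+1}\|H_{n,i-1}\|_{2,\mu_{\tau+1}}$ with an $O(s\Delta_{\mathrm F})$ base case, a restriction $s\Delta_{\op}\lesssim\lambda$, and the trivial case $\varepsilon>\Delta_{\op}$ (your centring of $W_\tau$ at $\Id$ instead of at $\mu_\tau W_\tau$ only changes constants, and in fact slightly improves them). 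The ``perturbation of \cref{ass:lambda}'' you use to absorb the tilt $e^{s\Delta_{\op}}$ is exactly \cref{lem:weighted_renewal} with $\kappa_\ell=1-\sigma_\ell$ and $\kappa=\lambda$; it is a short induction showing the tilted sum is at most $\tfrac32(1+a_t)$, but it is not automatic when the $\sigma_\ell$ vary, so you should cite or reprove it rather than leave it as an assertion.
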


Notice \cref{thm:spec} does not require bounded diameter or Lipschitz observables. However,  it is typically hard to obtain a good estimate of $\sigma_t$, since it requires a good control on $\mu_t$ (unless each Markov kernel has a common stationary distribution). See \citep{SCZ} for a discussion on related issues and the connection with the \emph{merging time} of a Markov chain. See also \citep{SCZwaves,SCZsurvey} for examples of inhomogeneous chains for which estimates on $\sigma_t$ are obtainable.

%% file: proofs.tex
\section{Proofs}
\label{sec:proofs_main}

\subsection{Preliminaries results}
This section is devoted to proving the results in \cref{sec:preliminaries}.

\Renewal*
\begin{proof}
First, observe that
\[
 W_n(X_n)^\ast W_n(X_n)  = \exp\!\Big(\frac12 e^{-i\phi}\,s\,\widetilde F_n(X_n)\Big) \exp\!\Big(\frac12 e^{i\phi}\,s\,\widetilde F_n(X_n)\Big)
= \exp\!\Big( \gamma\,s\,\widetilde F_n(X_n)\Big) = \Theta_{n,1}(X_n).
\]
Since
\[
M_nM_n^\ast
=
W_1(X_1)\cdots W_n(X_n)\,W_n(X_n)^\ast\cdots W_1(X_1)^\ast,
\]
we have that
\[
a_n(\phi)=\E\tr\big(M_{n-1}\,\Theta_{n,1}(X_n)\,M_{n-1}^\ast\big).
\]
Using \(\Theta_{n,1}(X_n)=B_{n,1}+H_{n,1}(X_n)\),
\[
a_n(\phi)=\E\tr(M_{n-1}B_{n,1}M_{n-1}^\ast)+\E\tr(M_{n-1}H_{n,1}(X_n)M_{n-1}^\ast).
\]
Conditioning on \(X_{n-1}, \dots, X_1\) we have:
\[
\E\!\left[H_{n,1}(X_n)\mid X_{n-1}, \dots, X_1\right]=(P_nH_{n,1})(X_{n-1}).
\]
Therefore, by the definition of  \(\Theta_{n,2}\),
\begin{align*}
&\E\!\left[M_{n-1}H_{n,1}(X_n)M_{n-1}^\ast\mid X_{n-1},\dots,X_1\right] \\
&\qquad= M_{n-2}\,W_{n-1}\,(X_{n-1}) \, \E\!\left[\,(P_nH_{n,1})(X_{n-1})\,\mid X_{n-1},\dots,X_1\right] \,W_{n-1}(X_{n-1})^\ast\,M_{n-2}^\ast \\
&\qquad= M_{n-2}\,\Theta_{n,2}(X_{n-1})\,M_{n-2}^\ast.
\end{align*}
Taking traces and expectations gives
\[
\E\tr(M_{n-1}H_{n,1}(X_n)M_{n-1}^\ast)=\E\tr(M_{n-2}\Theta_{n,2}(X_{n-1})M_{n-2}^\ast).
\]
Iterating this decomposition yields the identity
\[
a_n(\phi)=\sum_{i=1}^n \E\tr\!\big(M_{n-i}\,B_{n,i}\,M_{n-i}^\ast\big)\;+\;\E\tr\big(H_{n,n}(X_1)\big).
\]
The final remainder vanishes since \(H_{n,n}\) is centered under \(\mu_1\):
by construction \(B_{n,n}=\E[\Theta_{n,n}(X_1)]\), hence \(\E[H_{n,n}(X_1)]=0\) and so \(\E\tr(H_{n,n}(X_1))=0\).
Thus,
\(a_n(\phi)=\sum_{i=1}^n \E\tr(M_{n-i}B_{n,i}M_{n-i}^\ast)\).

Finally, for each \(i\), \(M_{n-i}^\ast M_{n-i}\succeq 0\) and \(B_{n,i}\in\Hm\), so
\[
\tr(M_{n-i}B_{n,i}M_{n-i}^\ast)=\tr(B_{n,i}M_{n-i}^\ast M_{n-i})
\le\|B_{n,i}\|_{\op} \tr(M_{n-i}^\ast M_{n-i})
\]
Taking expectations yields
\[
\E\tr(M_{n-i}B_{n,i}M_{n-i}^\ast)
\le \|B_{n,i}\|_{\op}\,\E\tr(M_{n-i}^\ast M_{n-i})
= \|B_{n,i}\|_{\op}\,a_{n-i}(\phi).
\]
Substituting this into the preceding sum yields \eqref{eq:renewal_ineq}.
\end{proof}

\TraceBound*
\begin{proof}
Fix \(n\ge 1\) and \(s\ge 0\), and condition on the trajectory \((X_1,\dots,X_n)\).
\cref{thm:GLSS} with \(H_j := s\,\widetilde F_j(X_j)\in\Hm\) yields
\[
\tr\exp\!\Bigl(\frac{\pi}{4}\sum_{j=1}^n s\,\widetilde F_j(X_j)\Bigr)
\le
m^{1-\pi/4}\int_{-\pi/2}^{\pi/2}
\tr\Bigl(\prod_{j=1}^n e^{\frac12 e^{i\phi}s\widetilde F_j(X_j)}\cdot
\prod_{j=n}^1 e^{\frac12 e^{-i\phi}s\widetilde F_j(X_j)}\Bigr)\,\nu(\phi).
\]
Taking expectations, 
\[
\E_{\mu_0}\tr\exp\!\Bigl(\frac{\pi}{4}\sum_{j=1}^n s\,\widetilde F_j(X_j)\Bigr)
\le
m^{1-\pi/4}\int_{-\pi/2}^{\pi/2} \E_{\mu_0}
\tr\Bigl(\prod_{j=1}^n e^{\frac12 e^{i\phi}s\widetilde F_j(X_j)}\cdot
\prod_{j=n}^1 e^{\frac12 e^{-i\phi}s\widetilde F_j(X_j)}\Bigr)\,\nu(\phi).
\]
Bounding the integral by the supremum, we obtain
\[
\E_{\mu_0}\tr\exp\!\Bigl(\frac{\pi}{4}\sum_{j=1}^n s\,\widetilde F_j(X_j)\Bigr)
\le
m^{1-\pi/4} \sup_{\phi\in[-\pi/2,\pi/2]} \E_{\mu_0}
\tr\Bigl(\prod_{j=1}^n e^{\frac12 e^{i\phi}s\widetilde F_j(X_j)}\cdot
\prod_{j=n}^1 e^{\frac12 e^{-i\phi}s\widetilde F_j(X_j)}\Bigr).
\]
Recall
\(M_n=\prod_{j=1}^n W_j(X_j)\) with \(W_j(x)=\exp(\frac12 e^{i\phi}s\widetilde F_j(x))\)
and \(W_j(x)^\ast=\exp(\frac12 e^{-i\phi}s\widetilde F_j(x))\).
Therefore,
\begin{equation} \label{eq:glss_reduce}
\E_{\mu_0}\tr\exp\!\Bigl(\frac{\pi}{4} s\sum_{j=1}^n \widetilde F_j(X_j)\Bigr)
\le
m^{1-\pi/4}\sup_{\phi\in[-\pi/2,\pi/2]} \E\tr(M_n M_n^\ast)
=
m^{1-\pi/4}\sup_{\phi\in[-\pi/2,\pi/2]} a_n(\phi).
\end{equation}

Now, from \eqref{eq:renewal_ineq},
\(a_n(\phi)\le (\sum_{i=1}^nb_{n,i}(\phi))\max_{0\le k\le n-1}a_k(\phi)\).
Assumption \eqref{eq:beta_sum} yields
\(a_n(\phi)\le (1+Cs^2)\max_{0\le k\le n-1}a_k(\phi)\).
Since \(a_0(\phi)=\E\tr(I)=m\), we obtain
\(a_n(\phi)\le m\,(1+Cs^2)^n\le m\,\exp(Cns^2)\).
Finally, \cref{eq:mgf_generic} follows by \cref{eq:glss_reduce}.
\end{proof}

\subsection{Regime (A): Proof of \cref{thm:curv}}

We start by showing that a Markov operator $P$ contractive in (scalar) Lipschitz seminorm is also contractive for the operator Lipschitz seminorm. This means we do not have to specialise the standard (scalar) notion of curvature to the high-dimensional setting.
\begin{restatable}{lemma}{ScalarToMatrixLip}\label[lemma]{lem:scalar_to_matrix_Lip}
Let \((\Omega,d)\) be a metric space and let \(P\) be a Markov operator acting on Lipschitz functions.
Assume that there exists \(a\ge 0\) such that, for every real-valued \(h:\Omega\to\mathbb R\),
\[
\Lip(Ph)\le a\,\Lip(h).
\]
Then, for every Hermitian-matrix-valued \(F:\Omega\to\Hm\),
\[
\Lip(PF)^{\op}\le a\,\Lip(F)^{\op}.
\]
\end{restatable}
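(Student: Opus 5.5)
The plan is to reduce the matrix statement to the scalar hypothesis through a variational formula for the operator norm of a Hermitian matrix. For $A\in\Hm$ we have
\[
\|A\|_{\op}=\sup_{\|u\|=1}|u^\ast A u|.
\]
Fix $x\neq y$ in $\Omega$ and set $A:=PF(x)-PF(y)$. Since $F$ is Hermitian-valued and $P$ integrates against a probability measure, $PF(x)$ is Hermitian, so $A\in\Hm$.

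Fix a unit vector $u\in\mathbb{C}^m$ and define the scalar function $h_u(z):=u^\ast F(z)u$. It is real-valued because $F(z)$ is Hermitian. By linearity of the integral,
\[
u^\ast (PF)(z)\,u=(Ph_u)(z),
\]
which only needs Fubini for the finite-dimensional integrand $F$; it is justified once $PF$ is well defined entrywise, as implicitly assumed. Moreover,
\[
|h_u(z)-h_u(w)|=|u^\ast(F(z)-F(w))u|\le \|F(z)-F(w)\|_{\op},
\]
so $\Lip(h_u)\le \Lip(F)^{\op}$.

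Applying the scalar hypothesis to $h_u$ gives
\[
|u^\ast A u|=|Ph_u(x)-Ph_u(y)|\le a\,\Lip(h_u)\,d(x,y)\le a\,\Lip(F)^{\op}\,d(x,y).
\]
Taking the supremum over unit $u$ yields $\|PF(x)-PF(y)\|_{\op}\le a\,\Lip(F)^{\op}d(x,y)$. Dividing by $d(x,y)$ and taking the supremum over $x\neq y$ concludes.

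There is no real obstacle. The only point needing care is that the reduction requires $A$ to be Hermitian, since for non-normal matrices the numerical radius differs from the operator norm. This is exactly why the statement restricts to $\Hm$-valued $F$.
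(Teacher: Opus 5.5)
Your proposal is correct and follows essentially the same argument as the paper: you use the variational formula $\|A\|_{\op}=\sup_{\|u\|=1}|u^\ast A u|$ for Hermitian $A$, apply the scalar hypothesis to $h_u(z)=u^\ast F(z)u$ via $\Lip(h_u)\le\Lip(F)^{\op}$, and then take the supremum over $u$ and over $x\neq y$. Your added remark that Hermitianity is what makes the numerical radius equal the operator norm is a correct and useful justification for the restriction to $\Hm$-valued $F$.
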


The proof uses the variational characterisation of the eigenvalues for Hermitian matrices and applies the scalar Lipschitz contractivity of $P$ to the function $h_u(z) \colon \Omega \to \mathbb{R}$, \(h_u(z):=\langle u,F(z)u\rangle\) for some $u \in \mathbb{C}^m$.

\begin{proof}
Fix \(x\neq y\). By the variational characterisation of the operator norm for Hermitian matrices,
\[
\|(PF)(x)-(PF)(y)\|_{\op}
=\sup_{\|u\|_2=1}\Big|\langle u,((PF)(x)-(PF)(y))u\rangle\Big|.
\]
For each unit vector \(u\in\mathbb C^m\), define the scalar function \(h_u(z):=\langle u,F(z)u\rangle\).
Then for all \(z,z'\),
\(
|h_u(z)-h_u(z')|\le \|F(z)-F(z')\|_{\op},
\)
hence \(\Lip(h_u)\le \Lip(F)^{\op}\).
Moreover, by linearity of \(P\),
\(
(Ph_u)(x)=\langle u,(PF)(x)u\rangle
\)
for all \(x\).
Therefore,
\begin{align*}
\Big|\langle u,((PF)(x)-(PF)(y))u\rangle\Big|
&=|(Ph_u)(x)-(Ph_u)(y)|\\
&\le \Lip(Ph_u)\,d(x,y)\\
&\le a\,\Lip(h_u)\,d(x,y)\\
&\le a\,\Lip(F)^{\op}\,d(x,y).
\end{align*}
Taking the supremum over \(\|u\|_2=1\) yields
\(
\|(PF)(x)-(PF)(y)\|_{\op}\le a\,\Lip(F)^{\op}\,d(x,y).
\)
Divide by \(d(x,y)\) and take the supremum over \(x\neq y\).
\end{proof}

Key to the results in this section is the simple observation that centred functions have bounded operator norm.

\begin{restatable}{lemma}{DiamCentre}\label[lemma]{lem:diam_center}
Let \((\Omega,d)\) be a metric space with finite diameter
\(
\diam(\Omega):=\sup_{x,y\in\Omega} d(x,y) \coloneqq D < \infty.
\)
Let \(\mu\) be a probability measure on \(\Omega\), and let \(H:\Omega\to\C^{m\times m}\) be a map
with \(\Lip(H)^{\op}<\infty\).
Let \(\widetilde H(x):=H(x)-\mu(H)\).
Then
\(
\Lip(\widetilde H)^{\op}=\Lip(H)^{\op}
\text{ and }
\|\widetilde H\|_{\infty}\le D\,\Lip(H)^{\op}.
\)
\end{restatable}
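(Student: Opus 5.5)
The first claim is immediate. Subtracting the constant matrix \(\mu(H)\) does not change increments: \(\widetilde H(x)-\widetilde H(y)=H(x)-H(y)\) for all \(x,y\). Taking the supremum of \(\|H(x)-H(y)\|_{\op}/d(x,y)\) over \(x\neq y\) then gives \(\Lip(\widetilde H)^{\op}=\Lip(H)^{\op}\).

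For the sup-norm bound, I would express \(\widetilde H(x)\) as an average of increments. First I need to check that \(\mu(H)=\int H\,d\mu\) is well defined. Finite diameter and finite Lipschitz constant give \(\|H(z)\|_{\op}\le \|H(x_0)\|_{\op}+D\,\Lip(H)^{\op}\) for any fixed \(x_0\). So \(H\) is bounded, hence Bochner integrable entrywise (assuming measurability, which holds since \(H\) is continuous). Then, for fixed \(x\), I would write
\[
\widetilde H(x)=H(x)-\int_\Omega H(y)\,\mu(dy)=\int_\Omega \big(H(x)-H(y)\big)\,\mu(dy),
\]
using that \(\mu\) is a probability measure.

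The key step is the triangle inequality for the Bochner integral in operator norm, \(\|\int G\,d\mu\|_{\op}\le\int\|G\|_{\op}\,d\mu\). It follows from convexity of the norm, or alternatively from duality: pair with unit vectors \(u,v\) and use \(|\langle u,Av\rangle|\le\|A\|_{\op}\). This yields
\[
\|\widetilde H(x)\|_{\op}\le\int_\Omega \|H(x)-H(y)\|_{\op}\,\mu(dy)\le \int_\Omega \Lip(H)^{\op}\,d(x,y)\,\mu(dy)\le D\,\Lip(H)^{\op}.
\]
Taking the supremum over \(x\) gives \(\|\widetilde H\|_\infty\le D\,\Lip(H)^{\op}\).

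The only point requiring care is the norm–integral interchange for matrix-valued integrands. I would handle it with the duality argument, \(|\langle u,\widetilde H(x)v\rangle|=|\int\langle u,(H(x)-H(y))v\rangle\,\mu(dy)|\), which reduces it to scalar integrals. This argument works for general complex matrices, not only Hermitian ones, consistent with the statement's \(H:\Omega\to\C^{m\times m}\).
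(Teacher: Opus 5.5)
Your proof is correct and follows essentially the same route as the paper: increments are invariant under subtracting the constant $\mu(H)$, and then $\widetilde H(x)=\int (H(x)-H(y))\,\mu(dy)$ is bounded via the triangle inequality, the Lipschitz bound, and $d(x,y)\le D$. Your remarks on the boundedness and measurability of $H$ and on the norm--integral interchange via duality are valid details that the paper leaves implicit.
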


\begin{proof}
The Lipschitz seminorm is invariant under addition of constants, so
\(\Lip(\widetilde H)^{\op}=\Lip(H)^{\op}\).

Fix \(x\in\Omega\). Since \(\mu(\widetilde H)=0\),
\[
\widetilde H(x)=\int\big(\widetilde H(x)-\widetilde H(y)\big)\,\mu(dy)
=\int\big(H(x)-H(y)\big)\,\mu(dy).
\]
Hence, by triangle inequality and the definition of \(\Lip(H)^{\op}\),
\[
\|\widetilde H(x)\|_{\op}
\le
\int \|H(x)-H(y)\|_{\op}\,\mu(dy)
\le
\Lip(H)^{\op}\int d(x,y)\,\mu(dy)
\le
D\,\Lip(H)^{\op}.
\]
Taking the supremum over \(x\) gives the claim.
\end{proof}

From the discussion in \cref{sec:preliminaries}, the proof of \cref{thm:curv} essentially reduces to bounding the coefficients $(b_{n,i})$ from \cref{prop:renewal}.
The main idea is to bound the norm of the centred function $H_i$ by iteratively applying the Lipschitz contractivity of the Markov operators. We will need an upper bound on the parameter $s$ to ensure the matrices $W_j(x)$ have small norm and we can therefore recursively bound their product.

To bound the coefficients $(b_{n,i})$ (which depend on product of matrices) we will need a bound on Lipschitzness and operator norm of the function of simple matrices $W_j$.

\begin{lemma}\label[lemma]{lem:W_bounds_curv}
Let \(s\ge 0\),  \(\phi \in [-\pi/2,\pi/2]\), and \(j\ge 1\). Then,
\begin{align*}\label{eq:W_bounds_curv}
\norminf{W_j}&\le \exp\!\Big(\frac{sLD}{2}\Big),
\qquad
\Lip(W_j)^\op\le \frac{s}{2}\exp\!\Big(\frac{sLD}{2}\Big)\,L,
\qquad
\Lip(W_j^\ast)^\op=\Lip(W_j)^\op,\\
\Lip(\Theta_{n,1})^\op &\le s\exp\!\Big(sLD\Big)\,L.
\end{align*}
\end{lemma}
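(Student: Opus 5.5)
The plan is to treat the centred observable $\widetilde F_j = F_j - \mu_j(F_j)$ as the basic object. By \cref{lem:diam_center} and \cref{ass:lip_obs}, $\Lip(\widetilde F_j)^\op = \Lip(F_j)^\op \le L$ and $\norminf{\widetilde F_j} \le LD$. Write $z := \tfrac12 e^{i\phi}s$, so that $|z| = s/2$ and $W_j(x) = \exp(z\widetilde F_j(x))$.

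\textbf{Sup-norm bound.} Since $\widetilde F_j(x)$ is Hermitian, $W_j(x)$ is a normal matrix with eigenvalues $e^{z\lambda}$, where $\lambda$ ranges over the eigenvalues of $\widetilde F_j(x)$. Therefore
\[
\|W_j(x)\|_\op = \max_\lambda |e^{z\lambda}| = \max_\lambda e^{\Re(z)\lambda} \le e^{(s/2)\,LD}.
\]
Alternatively, $\|e^{A}\|_\op \le e^{\|A\|_\op}$ gives the same bound directly.

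\textbf{Lipschitz bound for $W_j$.} Fix $x,y$ and set $A = z\widetilde F_j(x)$, $B = z\widetilde F_j(y)$. Use the Duhamel/Fréchet identity
\[
e^{A}-e^{B}=\int_0^1 e^{\tau A}(A-B)e^{(1-\tau)B}\,d\tau .
\]
Taking operator norms,
\[
\|e^A - e^B\|_\op \le \|A-B\|_\op \sup_\tau \|e^{\tau A}\|_\op \|e^{(1-\tau)B}\|_\op \le \tfrac{s}{2}\, L\, d(x,y)\, e^{\tau sLD/2} e^{(1-\tau)sLD/2}.
\]
The exponents sum to exactly $sLD/2$, which yields the claimed constant $\tfrac{s}{2}e^{sLD/2}L$.

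\textbf{Adjoint.} The identity $\Lip(W_j^\ast)^\op = \Lip(W_j)^\op$ holds because $\|M^\ast\|_\op = \|M\|_\op$, so $\|W_j(x)^\ast - W_j(y)^\ast\|_\op = \|W_j(x)-W_j(y)\|_\op$ for every pair $x,y$.

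\textbf{Bound for $\Theta_{n,1}$.} Write $\Theta_{n,1} = \exp(s\gamma \widetilde F_n)$ with $\gamma\in[0,1]$, and repeat the Duhamel argument with the real scalar $s\gamma$ in place of $z$. This gives
\[
\Lip(\Theta_{n,1})^\op \le s\gamma\, L\, e^{s\gamma LD} \le sLe^{sLD}.
\]
A second route is the factorisation $\Theta_{n,1} = W_n^\ast W_n$ together with the product rule
\[
\|W^\ast(x)W(x) - W^\ast(y)W(y)\|_\op \le \|W^\ast(x)-W^\ast(y)\|_\op\|W(x)\|_\op + \|W^\ast(y)\|_\op\|W(x)-W(y)\|_\op,
\]
which also gives $2\cdot\tfrac{s}{2}e^{sLD/2}L\cdot e^{sLD/2} = sLe^{sLD}$.

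The only mildly delicate step is justifying the Duhamel identity for non-commuting complex-scaled matrices. It follows by differentiating $\tau\mapsto e^{\tau A}e^{(1-\tau)B}$ and integrating over $[0,1]$, and it needs no Hermitian structure beyond the norm bounds above. Everything else is routine.
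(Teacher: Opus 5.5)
Your proposal is correct and follows essentially the same route as the paper: bound $\|\widetilde F_j\|_\infty\le LD$ via \cref{lem:diam_center}, then use $\|e^{H}\|_{\op}\le e^{\|H\|_{\op}}$ and the Duhamel representation of $e^{A}-e^{B}$. The paper packages these two facts as \cref{lem:exp_bounds}, whereas you re-derive them, and you do so for both $W_j$ and $\Theta_{n,1}$. Your alternative arguments are also valid and give the same constants: normality of $W_j(x)$ for the sup bound, and the factorisation $\Theta_{n,1}=W_n^\ast W_n$ combined with the product rule.
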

\begin{proof}
Let \(\alpha=\frac12 e^{i\phi}s\), so \(\mathrm{Re}(\alpha)=s\gamma/2\in[0,s/2]\) and \(|\alpha|=s/2\).
By Lemma~\ref{lem:diam_center} and Lemma~\ref{lem:exp_bounds},
\(\|W_j(x)\|_{\op}=\|e^{\alpha\widetilde F_j(x)}\|_{\op}\le e^{|\alpha|\|\widetilde F_j(x)\|_{\op}}\le e^{sLD/2}\).
For Lipschitzness, apply Lemma~\ref{lem:exp_bounds} with \(A=\alpha\widetilde F_j(x)\), \(B=\alpha\widetilde F_j(y)\):
\[
\|W_j(x)-W_j(y)\|_{\op}\le e^{sLD/2}\,|\alpha|\,\|\widetilde F_j(x)-\widetilde F_j(y)\|_{\op}
\le e^{sLD/2}\frac{s}{2}L\,d(x,y).
\]
Lipschitzness of $\Theta_{n,1}$ follows similarly.
\end{proof}

We can now bound the coefficients $(b_{n,i})$.

\begin{restatable}{lemma}{CoeffBoundsCurv}\label[lemma]{lem:coeff_inhomW}
Let $0<s \le 1/(LD)$ and \(\phi\in[-\pi/2,\pi/2]\) with \(\gamma=\cos\phi\).
Let \(b_{n,i}(\phi)\) be defined as in Proposition~\ref{prop:renewal}.
Then,
\begin{equation}\label{eq:beta_i_inhomW}
b_{n,i}(\phi)\ \le
\begin{cases}
\begin{aligned}
&\exp\!\Big(\frac{s^2L^2D^2}{2}\Big) &\quad i=1; \\[1ex]
&2s^2L^2D^2\; \prod_{\ell=n-i+2}^{n} e^{2sLD}\ (1-\kappa_\ell)& i\ge 2.
\end{aligned}
\end{cases}
\end{equation}
\end{restatable}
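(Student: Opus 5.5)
The plan is to treat the two cases separately. For $i=1$ I will use a matrix Hoeffding-type convexity bound. For $i\ge 2$ I will exploit that each $\Theta_{n,i}$ sandwiches a \emph{centred} function between matrices close to the identity. The Lipschitz constants of the $H_{n,i}$ will then be controlled recursively through Lemma~\ref{lem:scalar_to_matrix_Lip} and Lemma~\ref{lem:W_bounds_curv}.

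\emph{Case $i=1$.} Set $A(x)=s\gamma\widetilde F_n(x)$ and $a:=s\gamma LD\le sLD\le 1$. By Lemma~\ref{lem:diam_center}, $\|A(x)\|_\op\le a$, so the spectrum of $A(x)$ lies in $[-a,a]$. On $[-a,a]$, convexity of the exponential gives $e^{y}\le \cosh a+\frac{\sinh a}{a}\,y$. By the spectral calculus this yields $0\preceq e^{A(x)}\preceq \cosh(a)\Id+\frac{\sinh a}{a}A(x)$. Taking expectations under $X_n\sim\mu_n$ and using $\E A(X_n)=0$ (since $\widetilde F_n$ is centred under $\mu_n$), we get $0\preceq B_{n,1}\preceq\cosh(a)\Id$. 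Hence $b_{n,1}\le\cosh a\le e^{a^2/2}\le e^{s^2L^2D^2/2}$.

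\emph{Case $i\ge 2$.} Write $j=n-i+1$ and $G:=P_{j+1}H_{n,i-1}$. The first step is to note that $G$ is centred under $\mu_j$, because $\mu_j(G)=\mu_{j+1}(H_{n,i-1})=\E H_{n,i-1}(X_{j+1})=0$ by construction. Next, expand
\[
W_jGW_j^\ast=G+(W_j-\Id)G+G(W_j^\ast-\Id)+(W_j-\Id)G(W_j^\ast-\Id).
\]
The $G$ term has zero expectation. From $\|\widetilde F_j\|_\infty\le LD$ and the exponential bounds (Lemma~\ref{lem:exp_bounds}), we have $\|W_j-\Id\|_\infty\le \frac{sLD}{2}e^{sLD/2}$. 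Therefore
\[
b_{n,i}\le \|G\|_\infty\Big(sLD\,e^{sLD/2}+\tfrac{s^2L^2D^2}{4}e^{sLD}\Big).
\]
Since $G$ is centred, Lemma~\ref{lem:diam_center} together with Lemma~\ref{lem:scalar_to_matrix_Lip} (the curvature at time $j+1$ gives Lipschitz contraction with $a=1-\kappa_{j+1}$) yields
\[
\|G\|_\infty\le D(1-\kappa_{j+1})\,\ell_{i-1},\qquad \ell_k:=\Lip(H_{n,k})^\op=\Lip(\Theta_{n,k})^\op .
\]

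\emph{Recursion for $\ell_k$.} This is the main bookkeeping step. By the product rule for Lipschitz seminorms, together with Lemma~\ref{lem:W_bounds_curv} and the bounds on $G$ above,
\[
\ell_k\le 2\Lip(W)^\op\|W\|_\infty\|G\|_\infty+\|W\|_\infty^2\Lip(G)^\op\le e^{sLD}(1+sLD)(1-\kappa_{n-k+2})\,\ell_{k-1}\le e^{2sLD}(1-\kappa_{n-k+2})\,\ell_{k-1}.
\]
Lemma~\ref{lem:W_bounds_curv} also gives the base case $\ell_1\le sLe^{sLD}$. Unrolling the recursion,
\[
\ell_{i-1}\le sLe^{sLD}\prod_{\ell=n-i+3}^{n}e^{2sLD}(1-\kappa_\ell).
\]
Substituting into the bound for $b_{n,i}$ gives
\[
b_{n,i}\le s^2L^2D^2\,e^{\frac32 sLD}\Big(1+\tfrac{sLD}{4}e^{sLD/2}\Big)(1-\kappa_{n-i+2})\prod_{\ell=n-i+3}^n e^{2sLD}(1-\kappa_\ell).
\]
Write $x=sLD\le 1$. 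Then $e^{1.5x}(1+\tfrac{x}{4}e^{x/2})\le 2e^{2x}$, since $1+e^{1/2}/4<2$. This produces exactly $2s^2L^2D^2\prod_{\ell=n-i+2}^n e^{2sLD}(1-\kappa_\ell)$.

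The delicate points are confirming that $G$ is centred under the correct marginal $\mu_j$, so that the first-order term vanishes and Lemma~\ref{lem:diam_center} applies, and keeping the time indices of the $\kappa_\ell$ aligned through the recursion.
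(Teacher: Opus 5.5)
Your proof is correct and follows the paper's argument. For $i=1$ you use the Hoeffding-type chord bound. For $i\ge2$ you use the centring $\mu_j(G)=0$, \cref{lem:diam_center}, Lipschitz contraction via \cref{lem:scalar_to_matrix_Lip}, and the same product-rule recursion $\ell_k\le e^{2sLD}(1-\kappa_{n-k+2})\,\ell_{k-1}$ with base case $\ell_1\le sLe^{sLD}$. The only difference is cosmetic: you subtract $\Id$ from $W_j$, whereas the paper subtracts $\mu_j(W_j)$. This gives you a slightly different first-order prefactor, and you then need $sLD\le 1$ to absorb the constants into the factor $2$.
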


\begin{proof}
(i) For \(i=1\), we have \(B_{n,1}=\mu_n(\exp(s\gamma\,\widetilde F_n))\).
By Assumption~\ref{ass:lip_obs} and \cref{lem:diam_center},
\(\|\widetilde F_n(x)\|_{\op}\le LD\) for all \(x\), hence
\(-LD\,\Id\preceq \widetilde F_n(X_n)\preceq LD\,\Id\) almost surely under \(X_n\sim\mu_n\).
Apply Lemma~\ref{lem:matrix_hoeffding} with \(R=LD\) to obtain
\[
B_{n,1}=\mu_n\big(e^{s\gamma \widetilde F_n}\big)\ \preceq\ \exp\!\Big(\frac{s^2L^2D^2}{2}\Big)\Id,
\]
hence \(b_{n,1}(\phi)=\|B_{n,1}\|_{\op}\le \exp(s^2L^2D^2/2)\).

(ii) Fix \(i\ge 2\) and set \(\tau:=n-i+1\).
Let \(G:=P_{\tau+1}H_{n,i-1}\), and notice \(\mu_\tau(G)=0\). By definition,
\[
B_{n,i}=\mu_\tau\!\big(W_\tau\,G\,W_\tau^\ast\big).
\]
By writing \(W_\tau=\mu_\tau(W_\tau)+(W_\tau-\mu_\tau(W_\tau))\) and using \(\mu_\tau(\mu_\tau(W_\tau)\, G \, \mu_\tau(W_\tau)^\ast)=0\):
\[
B_{n,i}
=\mu_\tau\!\big((W_\tau-\mu_\tau W_\tau)\,G\,(\mu_\tau W_\tau)^\ast\big)
+\mu_\tau\!\big((\mu_\tau W_\tau)\,G\,(W_\tau^\ast-\mu_\tau W_\tau^\ast)\big)
+\mu_\tau\!\big((W_\tau-\mu_\tau W_\tau)\,G\,(W_\tau^\ast-\mu_\tau W_\tau^\ast)\big).
\]
By the submultiplicativity of the operator norm,
\begin{align}
\|B_{n,i}\|_{\op}
&\le
2\,\norminf{W_\tau}\,\norminf{W_\tau-\mu_\tau W_\tau}\,\mu_\tau(\|G\|_{\op})
+\norminf{W_\tau-\mu_\tau W_\tau}^2\,\mu_\tau(\|G\|_{\op}) \nonumber \\
&\le 4\,\norminf{W_\tau}\,\norminf{W_\tau-\mu_\tau W_\tau}\,\mu_\tau(\|G\|_{\op}),\label{eq:Bi_prefactor_inhomW}
\end{align}
where in the last inequality we have used $\norminf{W_\tau-\mu_\tau W_\tau} \le 2 \norminf{W_\tau}$.
By Lemma~\ref{lem:W_bounds_curv} and Lemma~\ref{lem:diam_center},
\[
\norminf{W_\tau}\le \exp\!\Big(\frac{sLD}{2}\Big)
\quad \text{and} \quad
\norminf{W_\tau-\mu_\tau W_\tau}
\le \sup_{x,y}\|W_\tau(x)-W_\tau(y)\|_{\op}
\le \frac{sLD}{2}\,\exp\!\Big(\frac{sLD}{2}\Big).
\]
Therefore,
\begin{equation}\label{eq:Bi_prefactor_inhomW2}
\|B_{n,i}\|_{\op}\le 2sLD\,e^{sLD} \,\mu_\tau(\|G\|_{\op}).
\end{equation}

Since \(\mu_\tau(G)=0\) and \(\diam(\Omega)=D\), we have that
\begin{equation}\label{eq:sup_by_lip_inhomW}
\norminf{G}\le D\,\Lip(G)^{\op}.
\end{equation}

Moreover, by \cref{ass:curv} and Lemma~\ref{lem:scalar_to_matrix_Lip},
\begin{equation}\label{eq:lipG}
\Lip(G)^{\op}=\Lip(P_{\tau+1}H_{n,i-1})^{\op}\le (1-\kappa_{\tau+1})\,\Lip(H_{n,i-1})^{\op}.
\end{equation}

We next bound \(\Lip(H_{n,i-1})^{\op}\).  We start with \(i=2\).  By
\cref{lem:W_bounds_curv},
\begin{equation} \label{eq:H1}
\Lip(H_{n,1})^{\op}\le \Lip(\Theta_{n,1})^{\op}\le sL\,e^{sLD}.
\end{equation}
%

For \(i\ge3\), since \(H_{n,i-1}=\Theta_{n,i-1}-B_{n,i-1}\) and \(B_{n,i-1}\)
is constant, \(\Lip(H_{n,i-1})^{\op}=\Lip(\Theta_{n,i-1})^{\op}\).  Recalling
the definition of \(\Theta_{n,i-1}\), and using the Lipschitz product rule
(\cref{lem:Lip_product_rule}), we have
\[
\Lip(\Theta_{n,i-1})^{\op}
\le \|W_{\tau+1}\|_\infty^{2}\,\Lip(P_{\tau+2}H_{n,i-2})^{\op}
+2\,\|W_{\tau+1}\|_\infty\,\Lip(W_{\tau+1})^{\op}\,\|P_{\tau+2}H_{n,i-2}\|_\infty.
\]
Moreover,
\(\mu_{\tau+1}(P_{\tau+2}H_{n,i-2})=\mu_{\tau+2}(H_{n,i-2})=0\), hence by
Lemma~\ref{lem:diam_center},
\[
\|P_{\tau+2}H_{n,i-2}\|_\infty
\le D\,\Lip(P_{\tau+2}H_{n,i-2})^{\op}.
\]
By Lemma~\ref{lem:W_bounds_curv}, \(\|W_{\tau+1}\|_\infty\le e^{sLD/2}\) and
\(\Lip(W_{\tau+1})^{\op}\le \frac{sL}{2}e^{sLD/2}\), and so
\[
\Lip(H_{n,i-1})^{\op}
\le e^{sLD}(1+sLD)\,\Lip(P_{\tau+2}H_{n,i-2})^{\op}
\le e^{2sLD}(1-\kappa_{\tau+2})\,\Lip(H_{n,i-2})^{\op}.
\]
Iterating this estimate for \(i\ge3\), and using \eqref{eq:H1} for the base
case, gives
\[
\Lip(H_{n,i-1})^{\op}\le
(e^{2sLD})^{i-2}\,\left(\prod_{\ell=\tau+2}^{n}(1-\kappa_{\ell})\right)\,\Lip(H_{n,1})^{\op},
\]
with the same bound for \(i=2\) with the convention that an empty product is equal to one.
Combining this with \cref{eq:Bi_prefactor_inhomW2,eq:sup_by_lip_inhomW,eq:lipG}  yields
\[
b_{n,i}(\phi)=\|B_{n,i}\|_{\op}
\le 2s^2L^2D^2\,(e^{2sLD})^{i-1}\,\prod_{\ell=\tau+1}^{n} (1-\kappa_{\ell}),
\]
which proves \eqref{eq:beta_i_inhomW}.
\end{proof}

To apply \cref{lem:close} we just need to sum the coefficients $(b_{n,i})$. In particular, we want to upper bound $\sum_{i=2}^n \prod_{\ell=n-i+2}^{n} e^{2sLD}\ (1-\kappa_\ell)$ using \cref{ass:curv}, but notice that each factor in the products is ``tilted'' by $e^{2sLD}$. The next lemma (which is trivial if $\kappa_\ell = \kappa$ for all $\ell$) ensures this tilting factor keeps the products summable.

\begin{restatable}{lemma}{Tilted}\label[lemma]{lem:weighted_renewal}
Assume $0\le \kappa_\ell\le 1$ for all $\ell\ge 1$ and that there exists $\kappa\in(0,1]$ such that
\begin{equation}\label{eq:effective_kappa_repeat}
1+\sum_{k=1}^{t}\ \prod_{\ell=k}^{t}(1-\kappa_\ell)\ \le\ \frac{1}{\kappa}  \qquad \text{ for all } t\ge1.
\end{equation}
Then, for every $n\ge 1$,
\[
\sum_{i=1}^n \Big(e^{\pi\kappa/24}\Big)^{\,i-1}\ \prod_{\ell=n-i+2}^{n}(1-\kappa_\ell)
\ \le\ \frac{3}{\kappa}.
\]
\end{restatable}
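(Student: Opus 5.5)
Write $r:=e^{\pi\kappa/24}$ and, for $0\le j\le n-1$, set $p_j:=\prod_{\ell=n-j+1}^{n}(1-\kappa_\ell)$, with $p_0=1$. Substituting $j=i-1$, the claim is $\sum_{j=0}^{n-1} r^{j}p_j\le 3/\kappa$. The plan is to convert the hypothesis \eqref{eq:effective_kappa_repeat} into geometric decay of the tail sums $T_k:=\sum_{j=k}^{n-1}p_j$, and then absorb the tilt $r^j$ by summation by parts.

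\emph{Step 1 (self-similarity of tails).} For $0\le k\le n-1$ and $j\ge k$, factor
\[
p_j=p_k\prod_{\ell=n-j+1}^{n-k}(1-\kappa_\ell),
\]
where the second factor is a product ending at time $t=n-k$. Summing over $j=k,\dots,n-1$ gives $1$ (from $j=k$) plus the sum $\sum_{k'}\prod_{\ell=k'}^{n-k}(1-\kappa_\ell)$ over starting points $k'=k+1,\dots,n-k$, which are some of the terms appearing in \eqref{eq:effective_kappa_repeat} with $t=n-k$. When $k=n-1$ this sum is empty and the claim below is trivial. All factors $1-\kappa_\ell$ are nonnegative, so dropping the remaining terms only decreases the sum. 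Hence $T_k\le p_k/\kappa$.

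\emph{Step 2 (geometric decay).} Since $p_k=T_k-T_{k+1}$, Step 1 gives $\kappa T_k\le T_k-T_{k+1}$, that is, $T_{k+1}\le(1-\kappa)T_k$. Together with $T_0\le 1/\kappa$ (Step 1 with $k=0$), this yields
\[
T_k\le \frac{(1-\kappa)^k}{\kappa}\qquad\text{for all }k,
\]
with the convention $T_n=0$.

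\emph{Step 3 (Abel summation).} Summation by parts gives
\[
\sum_{j=0}^{n-1}r^jp_j=\sum_{j=0}^{n-1} r^j(T_j-T_{j+1})=T_0+\sum_{j=1}^{n-1}(r^j-r^{j-1})T_j ,
\]
and the coefficients $r^j-r^{j-1}$ are nonnegative. Inserting the bound from Step 2,
\[
\sum_{j=0}^{n-1}r^jp_j\le\frac1\kappa\Big(1+(r-1)\sum_{j\ge1}r^{j-1}(1-\kappa)^j\Big)=\frac1\kappa\Big(1+\frac{(r-1)(1-\kappa)}{1-r(1-\kappa)}\Big).
\]
The geometric series converges because $r(1-\kappa)\le e^{\pi\kappa/24-\kappa}<1$.

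\emph{Step 4 (elementary estimate).} It remains to show $\frac{(r-1)(1-\kappa)}{1-r(1-\kappa)}\le 2$ for $\kappa\in(0,1]$. This is the only step that needs care, and it is purely numerical. Put $x=\pi\kappa/24\le 0.131$. On the numerator side, $r-1\le xe^{x}\le 1.14x$. On the denominator side, $1-r(1-\kappa)\ge 1-e^{-(1-\pi/24)\kappa}$, which is at least a constant multiple of $\kappa$ on $(0,1]$ by concavity of $u\mapsto 1-e^{-u}$: the denominator is bounded below by $(1-e^{-0.87})\kappa\approx 0.58\kappa$. The ratio is therefore at most about $1.14\cdot(\pi/24)/0.58\approx 0.26$, well below $2$. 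Hence the total is at most $3/\kappa$, as claimed.
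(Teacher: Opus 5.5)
Your proof is correct, but it takes a different route from the paper. The paper sets $u_\ell=1-\kappa_\ell$ and notes that the untilted and tilted sums satisfy $a_t=u_t(1+a_{t-1})$ and $b_t=1+ru_tb_{t-1}$. It then proves $b_t\le\frac32(1+a_t)$ by induction. The step closes because $(r-1)a_t\le (e^{\pi\kappa/24}-1)/\kappa\le e^{\pi/24}-1<\frac13$, and this gives $b_n\le\frac32(1+1/\kappa)\le 3/\kappa$.

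You instead first extract a structural consequence of the hypothesis. Applied at time $t=n-k$, it bounds each tail $T_k$ by $p_k/\kappa$, which forces the geometric decay $T_k\le(1-\kappa)^k/\kappa$. You then absorb the tilt exactly by Abel summation.

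\emph{What each route buys.} The paper's induction is shorter, and its only numerics is the one-line check $e^{\pi/24}-1<1/3$, but it settles for the constant $3$. Your route is more informative, because your intermediate bound simplifies to
\[
\frac1\kappa\Bigl(1+\frac{(r-1)(1-\kappa)}{1-r(1-\kappa)}\Bigr)=\frac{1}{1-r(1-\kappa)},
\]
which is exactly the limit of the tilted sum when $\kappa_\ell\equiv\kappa$. So it shows transparently that the averaged-curvature assumption costs nothing compared with uniform curvature, for any tilt with $r(1-\kappa)<1$. The paper's induction would reach the same bound if run with $a_t\le 1/\kappa-1$ and an optimised constant in place of $\frac32$, but as written it does not. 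In this simplified form your Step 4 reduces to checking $1-r(1-\kappa)\ge\kappa/3$, which your concavity estimate gives immediately.

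\emph{A minor slip.} In Step 1 the starting points of the products are $k'=n-j+1\in\{2,\dots,n-k\}$, not $\{k+1,\dots,n-k\}$. This is harmless: the correct range is still contained in $\{1,\dots,n-k\}$ and all terms are nonnegative, so $T_k\le p_k/\kappa$ follows as you claim.
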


\begin{proof}
Let \(u_\ell:=1-\kappa_\ell\in[0,1]\) and \(r:=e^{\pi\kappa/24}\). For \(t\ge 1\), define
\[
a_t:=\sum_{k=1}^{t}\prod_{\ell=k}^{t}u_\ell,
\qquad
b_t:=\sum_{i=1}^{t} r^{\,i-1}\prod_{\ell=t-i+2}^{t}u_\ell,
\]
with the convention that an empty product equals \(1\).
Note that \(b_t\) is exactly the quantity to be bounded.

Let \(t > 1\). Directly from the definitions,
\[
a_t=u_t(1+a_{t-1}),
\qquad
b_t=1+r u_t b_{t-1}.
\]

We prove by induction that for all \(t\ge 1\),
\begin{equation}\label{eq:St_induction}
b_t \le \frac32\,(1+a_t).
\end{equation}
The base case is immediate. Assume \eqref{eq:St_induction} holds at time \(t-1\). Then
\[
b_t=1+r u_t b_{t-1}
\le 1+\frac32 r u_t(1+a_{t-1})
=1+\frac32 r a_t.
\]
To close the induction, it suffices to show
\[
1+\frac32 r a_t\le \frac32(1+a_t),
\]
or equivalently \((r-1)a_t\le 1/3\). By \eqref{eq:effective_kappa_repeat}, \(a_t\le1/\kappa\). Moreover, since \(\kappa\in(0,1]\), the map \(x\mapsto (e^{\pi x/24}-1)/x\) is increasing on \((0,\infty)\), and \(\pi/24<\log(4/3)\), we have
\[
(r-1)a_t
\le \frac{e^{\pi\kappa/24}-1}{\kappa}
\le e^{\pi/24}-1
<\frac13.
\]
This proves \eqref{eq:St_induction}. Applying it with \(t=n\), and using again \eqref{eq:effective_kappa_repeat} and \(\kappa\le1\), gives
\[
b_n \le \frac32(1+a_n)\le \frac32\Big(1+\frac{1}{\kappa}\Big)\le \frac{3}{\kappa}.
\]
\end{proof}

\cref{thm:curv} is proved by applying the standard matrix Chernoff bound argument \eqref{eq:laplace_maxeig} together with \cref{thm:GLSS} and \cref{lem:close}, with the parameter $C$ in \cref{lem:close} derived from \cref{lem:coeff_inhomW,lem:weighted_renewal}.

\ChernoffCurv*
\begin{proof}
Set \(\overline r_{\max}:=\frac{\pi^2\kappa}{192LD}\). Fix \(r\in[0,\overline r_{\max}]\), \(\phi \in [-\pi/2,\pi/2]\), and let \(s \coloneqq \frac{4}{\pi}r\).
Then \(2sLD\le \pi\kappa/24\). By \cref{lem:coeff_inhomW,lem:weighted_renewal},
\begin{align*}
\sum_{i=1}^n b_{n,i}(\phi)
&\le e^{(1/2)s^2L^2D^2} + 2s^2L^2D^2 \sum_{i=2}^n \;\big(e^{2sLD}\big)^{\,i-1}\ \prod_{\ell=n-i+2}^{n}(1-\kappa_\ell) \\
&\le 1 + 2s^2L^2D^2 \sum_{i=1}^n \;\big(e^{2sLD}\big)^{\,i-1}\ \prod_{\ell=n-i+2}^{n}(1-\kappa_\ell) \\
&\le 1 + 2s^2L^2D^2 \sum_{i=1}^n \;\big(e^{\pi\kappa/24}\big)^{\,i-1}\ \prod_{\ell=n-i+2}^{n}(1-\kappa_\ell) \\
&\le 1 + 6s^2L^2D^2 \kappa^{-1}.
\end{align*}
In the second line we used \(e^{x/2}\le1+2x\) for \(0\le x\le1\), with \(x=s^2L^2D^2\le(\pi/48)^2<1\).

Apply Lemma~\ref{lem:close} with \(C = 6L^2D^2\kappa^{-1}\). Since \(s=(4/\pi)r\),
\[
\E_{\mu_0}\tr\exp\!\Bigl(r\sum_{j=1}^n \widetilde F_j(X_j)\Bigr)
\le m^{2-\pi/4}\exp\!\Bigl(\frac{16C}{\pi^2}n r^2\Bigr)
= m^{2-\pi/4}\exp\!\Bigl(\frac12 n v^2 r^2\Bigr),
\]
where \(v^2=\frac{32C}{\pi^2}=\frac{192}{\pi^2}\frac{L^2D^2}{\kappa}\).
Thus, for every \(r\in[0,\overline r_{\max}]\),
\[
\Pbbmu\!\bigl(\lmax(\sum_{j=1}^n \widetilde F_j(X_j))\ge n\varepsilon\bigr)
\le
m^{2-\pi/4}\exp\!\Bigl(-n\bigl(r\varepsilon-\tfrac12 v^2r^2\bigr)\Bigr).
\]

If \(0\le\varepsilon\le LD\), then \(r^\star:=\varepsilon/v^2\le \overline r_{\max}\), because \(v^2\overline r_{\max}=LD\). Choosing \(r=r^\star\) gives
\[
\Pbbmu\!\bigl(\lmax(\sum_{j=1}^n \widetilde F_j(X_j))\ge n\varepsilon\bigr)
\le m^{2-\pi/4}\exp\!\left(-\frac{n\varepsilon^2}{2v^2}\right).
\]
If \(\varepsilon>LD\), then the bound is obvious: by Lemma~\ref{lem:diam_center},
\[
\left\|F_j(X_j)-\E F_j(X_j)\right\|_{\op}\le LD
\]
for each \(j\), and hence
\[
\lmax\left(\sum_{j=1}^n \widetilde F_j(X_j)\right)
\le \sum_{j=1}^n \left\|\widetilde F_j(X_j)\right\|_{\op}
\le nLD<n\varepsilon.
\]
This proves the stated bound for all \(\varepsilon\ge0\).
\end{proof}

\subsection{Regime (A): Improved dependence on the diameter}
Before proving \cref{thm:curv_diam}, we show how the logarithmic dependency arises in a simpler scalar setting.
Let \(P\) be a time-homogeneous Markov kernel with invariant distribution \(\pi\), and assume that
\[
W_1(P(x,\cdot),P(y,\cdot))\le (1-\kappa)d(x,y)
\qquad\text{for all }x,y\in\Omega
\]
for some \(0<\kappa\le1\).
Let \(f:\Omega\to\R\) satisfy \(\pi(f)=0\),
\[
\Lip(f)\le L,\qquad \osc(f)\le \Delta\le LD,
\]
and let the chain be started from stationarity.
We claim that its asymptotic variance is bounded at the same scale as the variance proxy in \cref{thm:curv_diam}:
\[
\sigma_{\mathrm{as}}^2(f)
\coloneqq
\lim_{n\to\infty}\frac1n\operatorname{Var}_{\pi}\!\left(\sum_{t=1}^n f(X_t)\right)
\le
C\,\Delta^2\kappa^{-1}\left(1+\log\frac{LD}{\Delta}\right)
\]
for some constant \(C > 0\).

For every \(n\), stationarity implies the covariance decomposition
\[
\frac1n\operatorname{Var}_{\pi}\!\left(\sum_{t=1}^n f(X_t)\right)
=
\operatorname{Var}_{\pi}(f)
+2\sum_{i=1}^{n-1}\left(1-\frac{i}{n}\right)
\operatorname{Cov}_{\pi}\big(f(X_0),f(X_i)\big).
\]
It is therefore enough to control the covariance between $f(X_0)$ and $f(X_i)$. The key observation is that, for small $i$, it is more convenient to bound this covariance by the oscillation of $f$ rather than curvature.
To this end, notice that since the chain is stationary and \(\pi(f)=0\),
\[
\operatorname{Cov}_{\pi}\big(f(X_0),f(X_i)\big)
=\int f(x)P^if(x)\,\pi(dx).
\]
Since \(\pi(f)=0\), we have \(\norminf{f}\le \osc(f)\le\Delta\).
Moreover \(\pi(P^if)=0\) for every \(i\), and hence
\[
\norminf{P^if}\le \osc(P^if)\le \osc(f)\le \Delta.
\]
On the other hand, by curvature,
\[
\Lip(P^if)\le (1-\kappa)^i\Lip(f)\le L(1-\kappa)^i,
\]
and Lemma~\ref{lem:diam_center} yields
\[
\norminf{P^if}\le D\,\Lip(P^if)\le LD(1-\kappa)^i.
\]
Combining the oscillation bound with the Lipschitz-contraction bound,
\[
\norminf{P^if}\le \Delta\min\left\{1,\frac{LD}{\Delta}(1-\kappa)^i\right\}.
\]
Consequently,
\[
\left|\int f(x)P^if(x)\,\pi(dx)\right|
\le
\Delta^2\min\left\{1,\frac{LD}{\Delta}(1-\kappa)^i\right\}.
\]

It remains to sum this profile for all $i$. Since \(LD/\Delta\ge1\),
when \(\kappa=1\), the next estimate is immediate, so assume \(0<\kappa<1\) and define
\[
i_\star:=1+\left\lceil\frac{\log(LD/\Delta)}{-\log(1-\kappa)}\right\rceil.
\]
Then \(i_\star\le 2+\kappa^{-1}\log(LD/\Delta)\), since \(-\log(1-\kappa)\ge\kappa\), and
\[
\sum_{i\ge1}\min\left\{1,\frac{LD}{\Delta}(1-\kappa)^i\right\}
\le
i_\star+\frac{LD}{\Delta}\sum_{i\ge i_\star}(1-\kappa)^i
\le
2+\kappa^{-1}\log(LD/\Delta)+\kappa^{-1}.
\]
Therefore
\[
\frac1n\operatorname{Var}_{\pi}\!\left(\sum_{t=1}^n f(X_t)\right)
\le
\Delta^2\left(1+2\sum_{i\ge1}\min\left\{1,\frac{LD}{\Delta}(1-\kappa)^i\right\}\right)
\le
7\Delta^2\kappa^{-1}(1+\log(LD/\Delta)).
\]
Letting \(n\to\infty\) yields the displayed bound on \(\sigma_{\mathrm{as}}^2(f)\).

We exploit the intuition outlined above to improve the variance proxy of \cref{thm:curv}. However, a crude application would achieve the correct logarithmic dependency on the diameter in the variance proxy only over a small sub-Gaussian window. To enlarge this window, we must keep track of how curvature contracts the relevant distances below the diameter scale.
In particular, we will need the following way of recording increments at different distance scales.
For a matrix-valued function \(H:\Omega\to\mathbb C^{m\times m}\), we say that an increasing concave function
\(\psi:[0,D]\to[0,\infty)\) controls the increments of \(H\) if
\[
\|H(x)-H(y)\|_{\op}\le \psi(d(x,y))\qquad\text{for all }x,y\in\Omega.
\]

The next lemma shows how applying a positively curved operator improves the control on the increments.

\begin{lemma}\label[lemma]{lem:concave_profile_contract}
Let \(P_t\) have curvature at least \(\kappa\).
If an increasing concave function \(\psi:[0,D]\to[0,\infty)\) controls the increments of \(H\), then
\[
\|P_tH(x)-P_tH(y)\|_{\op}\le \psi((1-\kappa)d(x,y))
\qquad\text{for all }x,y\in\Omega.
\]
\end{lemma}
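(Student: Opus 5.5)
We use a coupling argument combined with Jensen's inequality for the concave profile. Fix \(x\neq y\). Since \(\Omega\) is Polish and \(d\) is lower semicontinuous, an optimal coupling \(\pi_{xy}\) of \(P_t(x,\cdot)\) and \(P_t(y,\cdot)\) exists. By \cref{def:curvature} and \(\kappa_t\ge\kappa\), it satisfies
\[
\int d(x',y')\,\pi_{xy}(dx',dy') = W_1(P_t(x,\cdot),P_t(y,\cdot)) \le (1-\kappa)\,d(x,y).
\]
If one prefers to avoid existence of optimal couplings, we can take an \(\eta\)-optimal coupling and let \(\eta\to0\) at the end, using continuity of \(\psi\) on \([0,D)\). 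Concavity on a closed interval allows a jump only at \(0\), where monotonicity handles it, so the limit still gives the bound.

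The key step is to write the difference as an integral against the coupling:
\[
P_tH(x)-P_tH(y)=\int \big(H(x')-H(y')\big)\,\pi_{xy}(dx',dy').
\]
This holds because the two marginals reproduce the integrals against \(P_t(x,\cdot)\) and \(P_t(y,\cdot)\). We then apply the triangle inequality for the Bochner integral in operator norm, and afterwards use the hypothesis that \(\psi\) controls the increments of \(H\):
\[
\|P_tH(x)-P_tH(y)\|_{\op}\le \int \|H(x')-H(y')\|_{\op}\,d\pi_{xy}\le \int \psi(d(x',y'))\,d\pi_{xy}.
\]

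Next we apply Jensen's inequality to the concave function \(\psi\), with the random variable \(d(x',y')\) taking values in \([0,D]\). This gives
\[
\int \psi(d(x',y'))\,d\pi_{xy}\le \psi\Big(\int d(x',y')\,d\pi_{xy}\Big)\le \psi\big((1-\kappa)d(x,y)\big).
\]
The final inequality uses that \(\psi\) is increasing. For \(x=y\) the claim is trivial, since the left-hand side vanishes.

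The only delicate points are measurability and integrability, which are routine given that \(H\) is bounded on a bounded space, and the existence or approximation of the optimal coupling. We expect the latter to be the main (mild) obstacle. We resolve it by the \(\eta\)-approximation described above, which uses continuity of \(\psi\) away from \(0\) together with monotonicity.
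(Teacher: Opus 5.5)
Your proof is correct and is essentially the paper's argument: take an optimal $W_1$ coupling $(U,V)$ of $P_t(x,\cdot)$ and $P_t(y,\cdot)$, bound $\|P_tH(x)-P_tH(y)\|_{\op}\le\E\|H(U)-H(V)\|_{\op}\le\E\psi(d(U,V))$, apply Jensen using concavity of $\psi$, and finish with the curvature bound and monotonicity of $\psi$. One small correction to your optional fallback via $\eta$-optimal couplings, which is unnecessary on a Polish space anyway: when $(1-\kappa)d(x,y)=0$, a possible jump of $\psi$ at $0$ is not handled by monotonicity, but by the fact that $W_1=0$ forces $P_t(x,\cdot)=P_t(y,\cdot)$, so the left-hand side vanishes.
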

\begin{proof}
Fix \(x,y\in\Omega\), and let \((U,V)\) be an optimal coupling of \(P_t(x,\cdot)\) and \(P_t(y,\cdot)\) for the \(W_1\) distance.
Then, by Jensen's inequality and the concavity of \(\psi\),
\begin{align*}
\|P_tH(x)-P_tH(y)\|_{\op}
&=\left\|\E\big[H(U)-H(V)\big]\right\|_{\op}\\
&\le \E\|H(U)-H(V)\|_{\op}\\
&\le \E\psi(d(U,V))\\
&\le \psi\big(\E d(U,V)\big).
\end{align*}
By the definition of curvature in \cref{def:curvature} and Assumption~\ref{ass:unif_kappa},
\[
\E d(U,V)=W_1(P_t(x,\cdot),P_t(y,\cdot))\le (1-\kappa)d(x,y).
\]
Since \(\psi\) is increasing, the claim follows.
\end{proof}

The next lemma show we can control the increments of \(H_{n,j}\), as defined in \cref{prop:renewal}, by a certain recursively defined family of functions.

\begin{lemma}\label[lemma]{lem:scale_profile_remainders}
Fix \(n\ge1\), \(s\ge0\), and \(\phi\in[-\pi/2,\pi/2]\).
Let \(H_{n,j}\) be the remainders defined in \cref{prop:renewal}. Define \(\psi_1,\psi_2,\dots\) recursively by
\[
\psi_1(r):=s\Delta_{\op} e^{s\Delta_{\op}}\min\left\{1,\frac{Lr}{\Delta_{\op}}\right\},
\]
and, for \(j\ge2\),
\[
\psi_j(r):=e^{s\Delta_{\op}}\psi_{j-1}((1-\kappa)r)
+s\Delta_{\op} e^{s\Delta_{\op}}\min\left\{1,\frac{Lr}{\Delta_{\op}}\right\}\psi_{j-1}((1-\kappa)D).
\]
Then, \(\psi_j\) $(1\le j \le n)$ is increasing and concave, and \(\psi_j\) controls the increments of \(H_{n,j}\).
\end{lemma}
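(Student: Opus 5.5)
We argue by induction on \(j\), proving simultaneously that \(\psi_j\) is increasing and concave on \([0,D]\) and that it controls the increments of \(H_{n,j}\). Structural properties are easy. The map \(r\mapsto\min\{1,Lr/\Delta_{\op}\}\) is increasing and concave, being a minimum of two affine increasing functions. If \(\psi_{j-1}\) is increasing and concave, so is \(r\mapsto\psi_{j-1}((1-\kappa)r)\), since precomposition with a nonnegative linear map preserves both properties and \((1-\kappa)r\in[0,D]\). The constant \(\psi_{j-1}((1-\kappa)D)\) is nonnegative, and nonnegative combinations of increasing concave functions are increasing concave. Hence \(\psi_j\) inherits both properties.

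\textbf{Base case \(j=1\).} We have \(H_{n,1}(x)-H_{n,1}(y)=\Theta_{n,1}(x)-\Theta_{n,1}(y)=e^{s\gamma\widetilde F_n(x)}-e^{s\gamma\widetilde F_n(y)}\). First, \(\|\widetilde F_n(x)\|_{\op}\le\Delta_{\op}\), since \(\widetilde F_n(x)=\int(F_n(x)-F_n(y))\mu_n(dy)\) as in \cref{lem:diam_center}. Applying \cref{lem:exp_bounds} with exponents of norm at most \(s\Delta_{\op}\) gives
\[
\|\Theta_{n,1}(x)-\Theta_{n,1}(y)\|_{\op}\le s\,e^{s\Delta_{\op}}\|F_n(x)-F_n(y)\|_{\op}.
\]
Finally, \(\|F_n(x)-F_n(y)\|_{\op}\le\min\{\Delta_{\op},L\,d(x,y)\}=\Delta_{\op}\min\{1,Ld(x,y)/\Delta_{\op}\}\). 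Together these give control by \(\psi_1\).

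\textbf{Inductive step.} Let \(\tau=n-j+1\) and \(G=P_{\tau+1}H_{n,j-1}\). Since \(\psi_{j-1}\) is increasing and concave and controls \(H_{n,j-1}\), \cref{lem:concave_profile_contract} shows that \(\|G(x)-G(y)\|_{\op}\le\psi_{j-1}((1-\kappa)d(x,y))\). Because \(\mu_\tau(G)=\mu_{\tau+1}(H_{n,j-1})=0\), the centring argument of \cref{lem:diam_center} gives
\[
\norminf{G}\le\sup_{x,y}\|G(x)-G(y)\|_{\op}\le\psi_{j-1}((1-\kappa)D).
\]
We then expand the product:
\[
\Theta_{n,j}(x)-\Theta_{n,j}(y)=W(x)\bigl(G(x)-G(y)\bigr)W(x)^\ast+\bigl(W(x)-W(y)\bigr)G(y)W(x)^\ast+W(y)G(y)\bigl(W(x)^\ast-W(y)^\ast\bigr),
\]
where \(W=W_\tau\). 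The same exponential bounds as in \cref{lem:W_bounds_curv}, with \(LD\) replaced by \(\Delta_{\op}\), give \(\norminf{W}\le e^{s\Delta_{\op}/2}\) and \(\|W(x)-W(y)\|_{\op}\le\frac s2e^{s\Delta_{\op}/2}\Delta_{\op}\min\{1,Ld(x,y)/\Delta_{\op}\}\). The first term is then at most \(e^{s\Delta_{\op}}\psi_{j-1}((1-\kappa)d(x,y))\). The two remaining terms together are at most \(s\Delta_{\op}e^{s\Delta_{\op}}\min\{1,Ld(x,y)/\Delta_{\op}\}\,\psi_{j-1}((1-\kappa)D)\). Subtracting the constant \(B_{n,j}\) does not change increments, so \(\psi_j\) controls \(H_{n,j}\).

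The main obstacle is the Lipschitz-type bound for the matrix exponential in terms of the increment of \(\widetilde F\) rather than \(L\,d\). It needs \cref{lem:exp_bounds} in the form \(\|e^A-e^B\|_{\op}\le e^{\max(\|A\|,\|B\|)}\|A-B\|_{\op}\), used with the sharper norm bound \(\Delta_{\op}\) instead of \(LD\). Everything else is bookkeeping.
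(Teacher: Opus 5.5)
Your proof is correct and follows the paper's argument essentially step for step. It uses the same simultaneous induction on shape and control, the same base case via \cref{lem:exp_bounds} with $\|\widetilde F_n\|_\infty\le\Delta_{\op}$ and $\|F_n(x)-F_n(y)\|_{\op}\le\Delta_{\op}\min\{1,Ld(x,y)/\Delta_{\op}\}$, and the same inductive step combining \cref{lem:concave_profile_contract}, the centring bound $\norminf{G}\le\psi_{j-1}((1-\kappa)D)$, and the product estimate for $W_\tau G W_\tau^\ast$; your three-term expansion of $\Theta_{n,j}(x)-\Theta_{n,j}(y)$ is just the explicit justification of the inequality the paper states in one line.
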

\begin{proof}
The function \(r\mapsto \min\{1,Lr/\Delta_{\op}\}\) is increasing and concave on \([0,D]\), and it vanishes at \(0\).
By induction, if \(\psi_{j-1}\) is increasing and concave, then
\(r\mapsto e^{s\Delta_{\op}}\psi_{j-1}((1-\kappa)r)\) is increasing and concave.
The second summand in the definition of \(\psi_j\) is a nonnegative constant,
\(s\Delta_{\op} e^{s\Delta_{\op}}\psi_{j-1}((1-\kappa)D)\), times
\(r\mapsto \min\{1,Lr/\Delta_{\op}\}\), and is therefore also increasing and concave.
Since sums of increasing concave functions are increasing and concave, the same is true of each \(\psi_j\).

For later use, we note that, for every \(t\) and \(x\),
\[
\|\widetilde F_t(x)\|_{\op}
=\left\|\int\big(F_t(x)-F_t(y)\big)\,\mu_t(dy)\right\|_{\op}
\le \Delta_{\op}.
\]

We first prove the claim for \(j=1\).
Recall from \cref{prop:renewal} that
\[
H_{n,1}(x)=\exp\big(s\gamma \widetilde F_n(x)\big)-B_{n,1},
\qquad \gamma=\cos\phi\in[0,1].
\]
The constant \(B_{n,1}\) does not affect increments. Moreover, by Assumption~\ref{ass:lip_obs},
\[
\|\widetilde F_n(x)-\widetilde F_n(y)\|_{\op}
=\|F_n(x)-F_n(y)\|_{\op}
\le \Delta_{\op}\min\left\{1,\frac{Ld(x,y)}{\Delta_{\op}}\right\}.
\]
Lemma~\ref{lem:exp_bounds} therefore gives
\[
\|H_{n,1}(x)-H_{n,1}(y)\|_{\op}
\le s e^{s\Delta_{\op}} \|\widetilde F_n(x)-\widetilde F_n(y)\|_{\op}
\le s\Delta_{\op}e^{s\Delta_{\op}}\min\left\{1,\frac{Ld(x,y)}{\Delta_{\op}}\right\}.
\]
This proves the base case.

Assume now that \(\psi_{j-1}\) controls the increments of \(H_{n,j-1}\), and let \(\tau:=n-j+1\).
Set
\[
G:=P_{\tau+1}H_{n,j-1}.
\]
By \cref{lem:concave_profile_contract}, the increments of \(G\) are controlled by \(r\mapsto \psi_{j-1}((1-\kappa)r)\).
Moreover,
\[
\mu_\tau(G)=\mu_\tau(P_{\tau+1}H_{n,j-1})=\mu_{\tau+1}(H_{n,j-1})=0,
\]
where the last equality follows from the centering in \cref{prop:renewal}. Hence, for all \(x\in\Omega\),
\[
\|G(x)\|_{\op}
=\left\|\int \big(G(x)-G(y)\big)\,\mu_\tau(dy)\right\|_{\op}
\le \psi_{j-1}((1-\kappa)D).
\]

Since \(\|\widetilde F_\tau\|_\infty\le\Delta_{\op}\), Lemma~\ref{lem:exp_bounds} gives
\[
\|W_\tau\|_\infty\le e^{s\Delta_{\op}/2},
\]
and, using again Assumption~\ref{ass:lip_obs},
\[
\|W_\tau(x)-W_\tau(y)\|_{\op}
\le \frac{s}{2}e^{s\Delta_{\op}/2}\|\widetilde F_\tau(x)-\widetilde F_\tau(y)\|_{\op}
\le \frac{s\Delta_{\op}}{2}e^{s\Delta_{\op}/2}\min\left\{1,\frac{Ld(x,y)}{\Delta_{\op}}\right\}.
\]
The same bound holds for \(W_\tau^\ast\). Recalling from \cref{prop:renewal} that
\[
\Theta_{n,j}(x)=W_\tau(x)G(x)W_\tau(x)^\ast,\qquad H_{n,j}(x)=\Theta_{n,j}(x)-B_{n,j},
\]
we obtain, for all \(x,y\in\Omega\),
\begin{align*}
\|H_{n,j}(x)-H_{n,j}(y)\|_{\op}
&=\|\Theta_{n,j}(x)-\Theta_{n,j}(y)\|_{\op}\\
&\le \|W_\tau\|_\infty^2\|G(x)-G(y)\|_{\op}
 +2\|W_\tau\|_\infty\|W_\tau(x)-W_\tau(y)\|_{\op}\|G\|_\infty\\
&\le e^{s\Delta_{\op}}\psi_{j-1}((1-\kappa)d(x,y))
+s\Delta_{\op}e^{s\Delta_{\op}}\min\left\{1,\frac{Ld(x,y)}{\Delta_{\op}}\right\}\psi_{j-1}((1-\kappa)D)\\
&=\psi_j(d(x,y)).
\end{align*}
This closes the induction.
\end{proof}

We can now exploit the control of the increments provided by the previous lemma to bound the sum of the coefficients
$\{b_{n,i}\}$.

\begin{lemma}\label[lemma]{lem:sharp_window_sum_bi}
Let $0\le s\le \frac{\kappa}{20\Delta_{\op}(1+\log (LD/\Delta_{\op}))}$.
Then, for all \(n\ge1\) and all \(\phi\in[-\pi/2,\pi/2]\),
\[
\sum_{i=1}^n b_{n,i}(\phi)
\le
1+100\,s^2\Delta_{\op}^2\,\kappa^{-1}(1+\log (LD/\Delta_{\op})).
\]
\end{lemma}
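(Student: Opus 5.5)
We handle the $i=1$ term exactly as in \cref{lem:coeff_inhomW}, using $\|\widetilde F_n\|_\infty\le\Delta_{\op}$ in place of $LD$. \cref{lem:matrix_hoeffding} with $R=\Delta_{\op}$ then gives $b_{n,1}\le e^{s^2\Delta_{\op}^2/2}\le 1+s^2\Delta_{\op}^2$, since $s\Delta_{\op}\le 1/20$.

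For $i\ge2$ we set $\tau=n-i+1$ and $G=P_{\tau+1}H_{n,i-1}$, and reuse the decomposition \eqref{eq:Bi_prefactor_inhomW}. Now $\|W_\tau\|_\infty\le e^{s\Delta_{\op}/2}$ and $\|W_\tau-\mu_\tau W_\tau\|_\infty\le \frac{s\Delta_{\op}}2e^{s\Delta_{\op}/2}$. Moreover, $G$ is $\mu_\tau$-centred with increments controlled by $\psi_{i-1}((1-\kappa)\cdot)$ (by \cref{lem:scale_profile_remainders,lem:concave_profile_contract}), so $\|G\|_\infty\le\psi_{i-1}((1-\kappa)D)$. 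Hence
\[
b_{n,i}\le 2s\Delta_{\op}e^{s\Delta_{\op}}\,c_{i-1},\qquad c_j:=\psi_j((1-\kappa)D).
\]
The whole task therefore reduces to showing
\[
\sum_{j\ge1}c_j\lesssim s\Delta_{\op}\kappa^{-1}\bigl(1+\log(LD/\Delta_{\op})\bigr)
\]
uniformly in $n$.

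To bound the $c_j$, we write $g(r)=\min\{1,Lr/\Delta_{\op}\}$, $q=1-\kappa$ and $\beta=e^{s\Delta_{\op}}$, and unroll the recursion of \cref{lem:scale_profile_remainders}:
\[
\psi_j(r)=s\Delta_{\op}\beta^{j}g(q^{j-1}r)+\sum_{k=1}^{j-1}\beta^{k}\,s\Delta_{\op}\,g(q^{k-1}r)\,c_{j-k},
\]
where the $k$-th term comes from the inhomogeneous term at level $j-k+1$, propagated $k-1$ further contractions. Evaluating at $r=qD$ and setting $h_k:=s\Delta_{\op}\beta^k g(q^kD)$ gives the renewal-type inequality
\[
c_j\le h_j+\sum_{k=1}^{j-1}h_k\,c_{j-k}.
\]
Summing over $j$ yields $S\le H+HS$, where $S=\sum_j c_j$ and $H=\sum_k h_k$. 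So it suffices to show $H\le 1/2$ with $H\lesssim s\Delta_{\op}\kappa^{-1}(1+\log(LD/\Delta_{\op}))$, which gives $S\le 2H$.

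Bounding $H$ is where the logarithm appears and is the main obstacle. The term $g(q^kD)$ equals $1$ for $k\le k_\star\approx\kappa^{-1}\log(LD/\Delta_{\op})$ and equals $(LD/\Delta_{\op})q^k$ afterwards. The tilt $\beta^k$ must not destroy the geometric decay. For $k\le k_\star$ we have $\beta^k\le e^{s\Delta_{\op}k_\star}\le e^{1/20}$ by the choice of $s$. For $k>k_\star$, the ratio satisfies $\beta q\le e^{s\Delta_{\op}-\kappa}\le e^{-19\kappa/20}$, so the tail sums to $O(\kappa^{-1})$ times $\beta^{k_\star}$. Hence
\[
H\le e^{1/20}\,s\Delta_{\op}\bigl(k_\star+1+O(\kappa^{-1})\bigr)\le C's\Delta_{\op}\kappa^{-1}\bigl(1+\log(LD/\Delta_{\op})\bigr),
\]
and the constraint on $s$ makes this at most $C'/20$, below $1/2$ if the constants cooperate. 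This is the step where the upper bound on $s$ (the ``window'') is genuinely used. It is exactly the crude-versus-refined issue flagged before \cref{lem:concave_profile_contract}: without tracking the scale $q^kD$, one would lose the logarithm or shrink the window.

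Collecting the pieces,
\[
\sum_i b_{n,i}\le 1+s^2\Delta_{\op}^2+2e^{1/20}s\Delta_{\op}S\le 1+s^2\Delta_{\op}^2\bigl(1+4e^{1/20}C'\kappa^{-1}(1+\log(LD/\Delta_{\op}))\bigr).
\]
Checking that the numerical constants close to $100$ is routine bookkeeping.
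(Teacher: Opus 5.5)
Your proposal is correct and follows essentially the same route as the paper. Your $c_j$ are the paper's $q_{j,1}=\psi_j((1-\kappa)D)$, your unrolled renewal inequality $c_j\le h_j+\sum_{k<j}h_kc_{j-k}$ is exactly the paper's identity for $q_{j,1}$, and your $H$ is $s\Delta_{\op}$ times the paper's tilted sum, which the paper splits at $J=\lceil\kappa^{-1}\log(LD/\Delta_{\op})\rceil$ just as you do. The constants do close: the paper bounds the tilted sum by $10(1+\log(LD/\Delta_{\op}))/\kappa$, so $H\le 1/2$ even at the edge of the window. One small fix is needed in the step $S\le H+HS$: sum only up to a finite $N$, since finiteness of the full series is not known a priori.
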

\begin{proof}
Let \(\psi_j\) be as in \cref{lem:scale_profile_remainders}, and, for \(j,k\ge1\), define
\[
q_{j,k}:=\psi_j((1-\kappa)^kD).
\]
From the recursive definition of \(\psi_j\),
\[
q_{j,k}=e^{s\Delta_{\op}} q_{j-1,k+1}
+s\Delta_{\op}e^{s\Delta_{\op}}\min\left\{1,\frac{LD}{\Delta_{\op}}(1-\kappa)^k\right\}q_{j-1,1}
\qquad (j\ge2,\ k\ge1),
\]
while
\[
q_{1,k}=s\Delta_{\op}e^{s\Delta_{\op}}\min\left\{1,\frac{LD}{\Delta_{\op}}(1-\kappa)^k\right\}.
\]
Iterating this identity gives, for every \(j\ge1\),
\begin{equation}\label{eq:qj1_renewal}
q_{j,1}
=
s\Delta_{\op} e^{s\Delta_{\op}j}\min\left\{1,\frac{LD}{\Delta_{\op}}(1-\kappa)^j\right\}
+s\Delta_{\op}\sum_{\ell=1}^{j-1}e^{s\Delta_{\op}\ell}\min\left\{1,\frac{LD}{\Delta_{\op}}(1-\kappa)^\ell\right\}q_{j-\ell,1}.
\end{equation}

We next bound
\[
S:=\sum_{\ell\ge1}e^{s\Delta_{\op}\ell}\min\left\{1,\frac{LD}{\Delta_{\op}}(1-\kappa)^\ell\right\}.
\]
Let \(J:=\left\lceil \kappa^{-1}\log (LD/\Delta_{\op})\right\rceil\). Since \(1-\kappa\le e^{-\kappa}\),
\[
\min\left\{1,\frac{LD}{\Delta_{\op}}(1-\kappa)^\ell\right\}
\le \min\left\{1,\frac{LD}{\Delta_{\op}}e^{-\kappa\ell}\right\}.
\]
Moreover, using \(s\Delta_{\op}\le \kappa/(20(1+\log (LD/\Delta_{\op})))\), \(\kappa\le1\), and \(J\le \kappa^{-1}\log (LD/\Delta_{\op})+1\), we have \(s\Delta_{\op}J\le 1/20\). Therefore,
\[
\sum_{\ell=1}^{J}e^{s\Delta_{\op}\ell}\min\left\{1,\frac{LD}{\Delta_{\op}}(1-\kappa)^\ell\right\}
\le J e^{s\Delta_{\op}J}
\le 2\,\frac{1+\log (LD/\Delta_{\op})}{\kappa}.
\]
For the tail, since \(s\Delta_{\op}\le \kappa/2\),
\[
\sum_{\ell>J}e^{s\Delta_{\op}\ell}\min\left\{1,\frac{LD}{\Delta_{\op}}(1-\kappa)^\ell\right\}
\le \frac{LD}{\Delta_{\op}}\sum_{\ell>J}e^{-(\kappa-s\Delta_{\op})\ell}
\le \frac{(LD/\Delta_{\op}) e^{-(\kappa-s\Delta_{\op})J}}{1-e^{-(\kappa-s\Delta_{\op})}}
\le \frac{2}{1-e^{-(\kappa-s\Delta_{\op})}}.
\]
Since \(0<\kappa-s\Delta_{\op}\le1\), we have \(1-e^{-(\kappa-s\Delta_{\op})}\ge(\kappa-s\Delta_{\op})/2\ge\kappa/4\). Hence,
\[
\sum_{\ell>J}e^{s\Delta_{\op}\ell}\min\left\{1,\frac{LD}{\Delta_{\op}}(1-\kappa)^\ell\right\}\le \frac{8}{\kappa}.
\]
Combining the two estimates,
\begin{equation}\label{eq:S_bound}
S\le 10\,\frac{1+\log (LD/\Delta_{\op})}{\kappa}.
\end{equation}
In particular \(s\Delta_{\op}S\le1/2\).

Summing \eqref{eq:qj1_renewal} over \(1\le j\le N\), we obtain
\[
\sum_{j=1}^N q_{j,1}
\le s\Delta_{\op}S+s\Delta_{\op}S\sum_{j=1}^N q_{j,1}.
\]
Since \(s\Delta_{\op}S\le1/2\),
\[
\left(1-s\Delta_{\op}S\right)\sum_{j=1}^N q_{j,1}
\le s\Delta_{\op}S,
\]
and hence
\begin{equation}\label{eq:sum_qj1_bound}
\sum_{j=1}^N q_{j,1}
\le 2s\Delta_{\op}S
\le 20s\Delta_{\op}\,\frac{1+\log (LD/\Delta_{\op})}{\kappa}.
\end{equation}

We now return to the coefficients \(b_{n,i}(\phi)\). For \(i=1\), since
\(\E_{\mu_n}\widetilde F_n(X_n)=0\) and \(\|\widetilde F_n\|_\infty\le\Delta_{\op}\), Lemma~\ref{lem:matrix_hoeffding} gives
\[
b_{n,1}(\phi)\le \exp(s^2\Delta_{\op}^2/2)\le 1+s^2\Delta_{\op}^2,
\]
where the last inequality uses \(s\Delta_{\op}\le1\).
For \(i\ge2\), set \(\tau:=n-i+1\) and \(G:=P_{\tau+1}H_{n,i-1}\).
By \cref{lem:scale_profile_remainders} and \cref{lem:concave_profile_contract}, for all \(x,y\in\Omega\),
\[
\|G(x)-G(y)\|_{\op}
\le \psi_{i-1}((1-\kappa)d(x,y)).
\]
Moreover,
\[
\mu_\tau(G)=\mu_\tau(P_{\tau+1}H_{n,i-1})=\mu_{\tau+1}(H_{n,i-1})=0,
\]
where the last equality follows from the centering in \cref{prop:renewal}. Hence, for every \(x\in\Omega\),
\[
\|G(x)\|_{\op}
=\left\|\int \big(G(x)-G(y)\big)\,\mu_\tau(dy)\right\|_{\op}
\le \psi_{i-1}((1-\kappa)D)=q_{i-1,1}.
\]
Therefore
\[
\|G\|_\infty\le q_{i-1,1}.
\]
Using \eqref{eq:Bi_prefactor_inhomW}, together with
\[
\|W_\tau\|_\infty\le e^{s\Delta_{\op}/2},
\qquad
\|W_\tau-\mu_\tau W_\tau\|_\infty
\le \sup_{x,y}\|W_\tau(x)-W_\tau(y)\|_{\op}
\le \frac{s\Delta_{\op}}{2}e^{s\Delta_{\op}/2},
\]
we obtain
\[
b_{n,i}(\phi)\le 2s\Delta_{\op}e^{s\Delta_{\op}} q_{i-1,1}.
\]
Since \(s\Delta_{\op}\le1/20\), \(e^{s\Delta_{\op}}\le2\), and \eqref{eq:sum_qj1_bound} holds with \(N=n-1\),
\[
\sum_{i=2}^n b_{n,i}(\phi)
\le 4s\Delta_{\op}\sum_{j=1}^{n-1}q_{j,1}
\le 80s^2\Delta_{\op}^2\,\frac{1+\log (LD/\Delta_{\op})}{\kappa}.
\]
Finally, since \((1+\log (LD/\Delta_{\op}))/\kappa\ge1\),
\[
\sum_{i=1}^n b_{n,i}(\phi)
\le 1+81s^2\Delta_{\op}^2\,\frac{1+\log (LD/\Delta_{\op})}{\kappa}
\le 1+100s^2\Delta_{\op}^2\,\frac{1+\log (LD/\Delta_{\op})}{\kappa}.
\]
\end{proof}

We now have all the ingredients to prove the main result of this section.
\ChernoffCurvDiam*
\begin{proof}
Set
\[
\overline r_{\max}:=\frac{\pi\kappa}{80\,\Delta_{\op}\,(1+\log (LD/\Delta_{\op}))}.
\]
Fix \(r\in[0,\overline r_{\max}]\), \(\phi\in[-\pi/2,\pi/2]\), and set \(s:=4r/\pi\).
Then
\[
s\Delta_{\op}\le \frac{\kappa}{20(1+\log (LD/\Delta_{\op}))},
\]
so \cref{lem:sharp_window_sum_bi} applies. By \cref{lem:close},
\[
\E_{\mu_0}\tr\exp\!\Bigl(r\sum_{j=1}^n \widetilde F_j(X_j)\Bigr)
\le
m^{2-\pi/4}
\exp\!\left(100n s^2\Delta_{\op}^2\kappa^{-1}(1+\log (LD/\Delta_{\op}))\right).
\]
Since \(s=4r/\pi\), this is
\[
\E_{\mu_0}\tr\exp\!\Bigl(r\sum_{j=1}^n \widetilde F_j(X_j)\Bigr)
\le
m^{2-\pi/4}
\exp\!\left(\frac12 n\overline v^2 r^2\right),
\]
with \(\overline v^2\) as in the statement. Let \(\widetilde S_n:=\sum_{j=1}^n \widetilde F_j(X_j)\). By \cref{eq:laplace_maxeig}, for every \(r\in[0,\overline r_{\max}]\),
\[
\Pbbmu\bigl(\lmax(\widetilde S_n)\ge n\varepsilon\bigr)
\le
m^{2-\pi/4}\exp\!\left(-n\bigl(r\varepsilon-\tfrac12\overline v^2r^2\bigr)\right).
\]

For \(0\le\varepsilon\le\Delta_{\op}\), the unconstrained optimizer
\[
r^\star:=\frac{\varepsilon}{\overline v^2}
\]
belongs to the admissible interval. Indeed,
\[
\overline v^2\,\overline r_{\max}
=\frac{40}{\pi}\,\Delta_{\op}
\ge \Delta_{\op}.
\]
Choosing \(r=r^\star\) yields
\[
\Pbbmu\bigl(\lmax(\widetilde S_n)\ge n\varepsilon\bigr)
\le m^{2-\pi/4}\exp\!\left(-\frac{n\varepsilon^2}{2\overline v^2}\right).
\]

Finally, for every \(j\),
\[
\|\widetilde F_j(x)\|_{\op}
=\|F_j(x)-\mu_j(F_j)\|_{\op}
\le \int \|F_j(x)-F_j(y)\|_{\op}\,\mu_j(dy)
\le \Delta_{\op}.
\]
Therefore
\[
\lmax(\widetilde S_n)
\le \sum_{j=1}^n \|\widetilde F_j(X_j)\|_{\op}
\le n\Delta_{\op},
\]
so the event is empty when \(\varepsilon>\Delta_{\op}\).
\end{proof}

\subsubsection{Tightness of the sub-Gaussian regime in \cref{thm:curv_diam}}
\label{app:tight}
We construct an example of a Markov chain with a variance proxy that matches the one of \cref{thm:curv_diam} up to constants.
This chain will be homogeneous but we will consider time-inhomogeneous observables (this will simplify the construction of such an example).

Let $\Omega=[0,D]$ with metric $d(x,y)=|x-y|$ (so $\mathrm{diam}(\Omega)=D$).
Consider the following chain:
\begin{equation}\label{eq:chain}
X_{t+1} \;=\; \frac{X_t}{2} \;+\; \frac{D}{2}\,B_{t+1},
\qquad
 B_{t+1}\sim \mathrm{Bernoulli}\!\left(\tfrac12\right)\ \text{i.i.d.}
\end{equation}
Let $P$ be the Markov operator associated with the chain.

It is straightforward to show this chain has $\Omega(1)$ curvature.
Indeed, for any $x,y\in\Omega$, couple one step of the evolution of the chain by using the same Bernoulli r.v. in \eqref{eq:chain}.
Then,
\[
W_1(P(x,\cdot),P(y,\cdot)) \le \frac{|x-y|}{2},
\]
and therefore $\kappa \ge 1/2$.

Let $\ell\ge 1$ be the largest integer such that
\begin{equation}\label{eq:ell_def}
\pi\,\Delta\,2^{\ell} \le LD.
\end{equation}
For $t=1,2,\dots,\ell$, define the (scalar) observable
\begin{equation}\label{eq:Ft}
F_t(x) \;:=\; \frac{\Delta}{2}\cos\!\Bigl(2\pi\,2^{t}\,\frac{x}{D}\Bigr).
\end{equation}
Then $\osc(F_t) \le \Delta$, and
\[
\Lip(F_t)
\le \sup_x \left|\frac{d}{dx}F_t(x)\right|
= \frac{\Delta}{2}\cdot 2\pi\,2^{t}\cdot \frac{1}{D}
 \le \frac{\pi\Delta\,2^{\ell}}{D}
\le L,
\]
where the last inequality is exactly \eqref{eq:ell_def}. Thus, the hypotheses $\Lip(F_t)\le L$ and $\osc(F_t)\le \Delta$ for all $t\le \ell$ are satisfied.

Let the chain start from the uniform distribution on $\Omega$, which is the stationary distribution of the chain:
\begin{equation}\label{eq:init}
X_0 \sim \mu_0 \coloneqq \mathrm{Unif}([0,D]).
\end{equation}
Unrolling \eqref{eq:chain} gives
\[
X_t=\frac{X_0}{2^t}+\frac{D}{2}\sum_{i=1}^t \frac{B_i}{2^{t-i}},
\]
and, hence,
\[
\frac{2^tX_t}{D}=\frac{X_0}{D}+ z,
\]
for some integer $z$.
Therefore, by \eqref{eq:Ft}, for every $t\le \ell$,
\[
F_t(X_t)
= \frac{\Delta}{2}\cos\!\Bigl(2\pi\,\frac{X_0}{D}\Bigr).
\]
In other words, $F_t(X_t)$ depends only on the starting point.
Moreover, $\E[F_t(X_t)]=0$ for all $t\le \ell$ since $X_0/D$ is uniform on $[0,1]$.

Define
\[
S_\ell := \sum_{t=1}^\ell \bigl(F_t(X_t)-\E[F_t(X_t)]\bigr)=\sum_{t=1}^\ell F_t(X_t).
\]
By the computation above,
\[
S_\ell= \frac{\Delta}{2}\,\ell\,\cos\!\Bigl(2\pi\,\frac{X_0}{D}\Bigr).
\]

Now suppose we want a sub-Gaussian bound of the form
\begin{equation}\label{eq:subg}
\Pbb(S_\ell \ge \ell\varepsilon)\ \le\ \exp\!\left(-\frac{\ell\varepsilon^2}{2v^2}\right)
\qquad\text{for all }\varepsilon\ge 0.
\end{equation}
Choose $\varepsilon = \Delta/4$. Since $X_0/D$ is uniform on $[0,1]$, we have that
\[
\Pbb\left(S_\ell >  \frac{\Delta}{4} \ell\right)=\Pbb\left(\cos\left( 2 \pi X_0/D \right) \ge \frac{1}{2}\right) = \frac{1}{3}.
\]
Therefore,
\[
\frac{1}{3} \le \exp\!\left(-\frac{\ell\varepsilon^2}{2v^2}\right)
\qquad\Rightarrow\qquad
v^2 \ge \frac{\ell\varepsilon^2}{2\log 3}.
\]
By \eqref{eq:ell_def}, $\ell=\Theta(\log(LD/\Delta))$,
which implies
\[
v^2 = \Omega\!\left(\Delta^2 \log\frac{LD}{\Delta}\right)
= \Omega\!\left(\frac{\Delta^2}{\kappa}\log\frac{LD}{\Delta}\right).
\]
This matches (up to constants) the $\Delta^2\kappa^{-1}\log(LD/\Delta)$ variance proxy  of \cref{thm:curv_diam}.

\subsection{Regime (B)}
\label{app:proofs_spec}
This section is devoted to proving \cref{thm:spec}. The proof follows a similar outline as the proof of \cref{thm:curv}, with the inhomogeneous spectral gap $\lambda$ replacing the role of the average curvature $\kappa$.

We first show that $P_t$ is non-expansive in $L^2(\mu_t)\to L^2(\mu_{t-1})$.

\begin{proposition}\label[proposition]{prop:L2_contraction_basic}
For each \(t\ge 1\), \(\|P_t\|_{L^2(\mu_t)\to L^2(\mu_{t-1})}\le 1\). In particular, \(\sigma_t\le 1\).
\end{proposition}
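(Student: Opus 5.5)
The plan is to use Jensen's inequality for the conditional expectation defining $P_t f$, and then the identity $\mu_t=\mu_{t-1}P_t$. Fix $f\in L^2(\mu_t)$. For each $x\in\Omega$, the measure $P_t(x,\cdot)$ is a probability measure. Hence, by Cauchy--Schwarz (equivalently Jensen),
\[
|P_tf(x)|^2=\Big|\int f(y)\,P_t(x,dy)\Big|^2\le \int |f(y)|^2\,P_t(x,dy)=P_t(|f|^2)(x).
\]
Before integrating, I will apply this pointwise bound to $|f|$ and note that $P_t|f|$ is measurable and nonnegative. This makes $P_t(|f|^2)$ well defined in $[0,\infty]$, and Tonelli's theorem can be used freely. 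The same argument also shows that $P_tf(x)$ is finite for $\mu_{t-1}$-a.e. $x$, so that $P_t$ is well defined on $L^2(\mu_t)$, and that $P_tf=P_tg$ $\mu_{t-1}$-a.e. whenever $f=g$ $\mu_t$-a.e.

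Integrating the pointwise bound against $\mu_{t-1}$ and using Tonelli together with the recursive definition $\mu_t(dy)=\int P_t(x,dy)\mu_{t-1}(dx)$ gives
\[
\|P_tf\|_{2,\mu_{t-1}}^2\le \int\!\!\int |f(y)|^2\,P_t(x,dy)\,\mu_{t-1}(dx)=\int |f|^2\,d\mu_t=\|f\|_{2,\mu_t}^2 .
\]
This shows $\|P_t\|_{L^2(\mu_t)\to L^2(\mu_{t-1})}\le 1$.

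For the claim about $\sigma_t$, I will first check that $P_t$ maps $L^2_0(\mu_t)$ into $L^2_0(\mu_{t-1})$. Indeed, $\mu_{t-1}(P_tf)=\mu_t(f)=0$ by Fubini, which is justified because $f\in L^1(\mu_t)$ (as $\mu_t$ is a probability measure). Then $\sigma_t$ is the norm of the restriction of $P_t$ to a subspace. The norm of a restriction is at most the full operator norm, so $\sigma_t\le 1$, and nonnegativity of $\sigma_t$ is immediate.

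There is no real obstacle here. The only points needing care are the measure-theoretic well-definedness issues above, namely finiteness almost everywhere and independence of representatives, and these are handled by the same Jensen bound.
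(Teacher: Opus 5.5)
Your proof is correct and follows the paper's argument essentially verbatim. Both use the pointwise Jensen bound $|P_tf|^2\le P_t(|f|^2)$, integrate it against $\mu_{t-1}$ using $\mu_t=\mu_{t-1}P_t$, and then restrict to mean-zero functions; your extra measure-theoretic remarks (a.e.\ finiteness, independence of representatives, and the check $\mu_{t-1}(P_tf)=\mu_t(f)$ showing $P_t$ maps $L^2_0(\mu_t)$ into $L^2_0(\mu_{t-1})$) are valid details that the paper leaves implicit.
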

\begin{proof}
Fix \(f\in L^2(\mu_t)\). By Jensen's inequality,
\[
|P_t f(x)|^2=\Big|\int f(y)\,P_t(x,dy)\Big|^2\le \int |f(y)|^2\,P_t(x,dy)=P_t(|f|^2)(x).
\]
Integrate both sides against \(\mu_{t-1}\) and use \(\mu_t=\mu_{t-1}P_t\):
\[
\|P_t f\|_{2,\mu_{t-1}}^2 \le \int P_t(|f|^2)\,d\mu_{t-1} = \int |f|^2\,d(\mu_{t-1}P_t)=\int |f|^2\,d\mu_t=\|f\|_{2,\mu_t}^2.
\]
Hence \(\|P_t\|_{L^2(\mu_t)\to L^2(\mu_{t-1})}\le 1\). Restricting to mean-zero subspaces yields \(\sigma_t\le 1\).
\end{proof}

We now show how contractivity of scalar $L^2_0(\mu_t)$ functions translates to matrix-valued functions. To do this, we need to introduce the equivalent of the $L^2(\nu)$ norm for such functions.
Given $F \colon \Omega \to \mathbb{C}^{m \times m}$, we define  \vspace{-0.3cm}
\[
\|F\|_{2,\nu} \coloneqq  \left(\int \tr F^\ast F \,d\nu\right)^{1/2} = \left(\int \|F\|_{\mathrm F}^2 \,d\nu\right)^{1/2}.
\]
It can be checked this is indeed a norm.

\begin{restatable}{lemma}{SpectralLift}\label[lemma]{lem:scalar_to_matrix_spec}
Let $F \colon \Omega \to \Hm$ such that $\mu_t(F) = 0$ (the zero matrix). Then, for all $t\ge 1$,
\[
\|P_t F\|_{2,\mu_{t-1}}\le \sigma_t\,\|F\|_{2,\mu_t}.
\]
\end{restatable}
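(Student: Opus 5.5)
The plan is to reduce the matrix statement to the scalar contraction \eqref{eq:L2_contract_scalar} entry by entry. The matrix norm \(\|F\|_{2,\nu}^2=\int \|F\|_{\mathrm F}^2\,d\nu\) splits as a sum of scalar \(L^2(\nu)\) norms of the entries: writing \(F_{jk}(x)\) for the \((j,k)\) entry of \(F(x)\), we have
\[
\|F\|_{2,\nu}^2=\sum_{j,k=1}^m \int |F_{jk}|^2\,d\nu=\sum_{j,k=1}^m \|F_{jk}\|_{2,\nu}^2 .
\]
Since \(P_t\) acts on \(F\) entrywise, \((P_tF)_{jk}=P_t(F_{jk})\). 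So the claim follows once each entry satisfies the scalar inequality with the same constant \(\sigma_t\).

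The one point needing care is that the entries \(F_{jk}\) are complex-valued, whereas \cref{def:sigma} is phrased for (implicitly real) functions \(f\) with \(\mu_t(f)=0\).
\begin{enumerate}[1.]
\item The hypothesis \(\mu_t(F)=0\) gives \(\mu_t(F_{jk})=0\) for every \(j,k\).
\item Split \(F_{jk}=g+ih\) with \(g,h\) real. Then \(\mu_t(g)=\mu_t(h)=0\), and since \(P_t\) is a real (positive) operator, \(P_tF_{jk}=P_tg+iP_th\).
\item Apply \eqref{eq:L2_contract_scalar} to \(g\) and to \(h\) separately. This gives
\[
\|P_tF_{jk}\|_{2,\mu_{t-1}}^2=\|P_tg\|_{2,\mu_{t-1}}^2+\|P_th\|_{2,\mu_{t-1}}^2\le \sigma_t^2\bigl(\|g\|_{2,\mu_t}^2+\|h\|_{2,\mu_t}^2\bigr)=\sigma_t^2\|F_{jk}\|_{2,\mu_t}^2 .
\]
(If \cref{def:sigma} is read as already covering complex \(f\), this step is immediate.)
\end{enumerate}

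Summing the entrywise bound over \(j,k\) yields
\[
\|P_tF\|_{2,\mu_{t-1}}^2\le\sigma_t^2\|F\|_{2,\mu_t}^2 ,
\]
and taking square roots gives the lemma. If \(\|F\|_{2,\mu_t}=\infty\) there is nothing to prove; otherwise each entry lies in \(L^2(\mu_t)\), so the scalar bound applies.

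The Hermitian assumption is not actually needed for this argument. I expect no real obstacle: the only subtlety is the real-versus-complex issue handled in step 2, and linearity and realness of \(P_t\) take care of it.
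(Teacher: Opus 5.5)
Your proposal is correct and follows essentially the same route as the paper: expand $\|F\|_{2,\nu}^2$ as the sum of the scalar $L^2(\nu)$ norms of the entries, apply the scalar contraction \eqref{eq:L2_contract_scalar} to each entry, and sum. Your extra step of splitting complex entries into real and imaginary parts is a harmless refinement that the paper leaves implicit; its definition of $\|f\|_{2,\nu}$ via $|f|^2$ already suggests that complex-valued $f$ are allowed.
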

Notice that $\|F\|_{2,\nu}$ is defined with respect to the Frobenius norm of $F(x)$: this is why the Frobenius norm of the observables' oscillation $\Delta_{\mathrm F}$ appears in \cref{thm:spec}.

\begin{proof}
Write $F=(f_{ab})_{a,b=1}^m$ entrywise. Since $\mu_t(F)=0$, each scalar entry satisfies $\mu_t(f_{ab})=0$.
Moreover,
\[
\|F\|_{2,\mu_t}^2=\int \|F(x)\|_{\mathrm F}^2\,d\mu_t(x)=\sum_{a,b=1}^m \int |f_{ab}(x)|^2\,d\mu_t(x)
=\sum_{a,b=1}^m \|f_{ab}\|_{2,\mu_t}^2,
\]
and similarly for $\|P_tF\|_{2,\mu_{t-1}}^2$.
Applying the scalar contraction \eqref{eq:L2_contract_scalar} to each entry gives
\[
\|P_t f_{ab}\|_{2,\mu_{t-1}}\le \sigma_t\,\|f_{ab}\|_{2,\mu_t},
\qquad a,b=1,\dots,m.
\]
Squaring and summing over $(a,b)$ yields
\[
\|P_tF\|_{2,\mu_{t-1}}^2=\sum_{a,b}\|P_t f_{ab}\|_{2,\mu_{t-1}}^2
\le \sigma_t^2\sum_{a,b}\|f_{ab}\|_{2,\mu_t}^2
=\sigma_t^2\|F\|_{2,\mu_t}^2
\]
Taking square roots concludes the proof.
\end{proof}

We then obtain some estimates on  \(W_t\).
\begin{lemma}\label[lemma]{lem:W_bounds_spec}
Under Assumption~\ref{ass:Delta}, for all \(t,x,\phi\),
\[
\|W_t(x)\|_{\op}\le e^{\frac12 s\Delta_{\op}},
\qquad
\sup_{x,y \in \Omega} \|W_t(x)-W_t(y)\|_{\op} \le \tfrac12 s\Delta_{\op}\, e^{\frac12 s\Delta_{\op}},
\qquad
\|H_{n,1}\|_{\infty}\le s\Delta_{\op}\, e^{s\Delta_{\op}}.
\]
\end{lemma}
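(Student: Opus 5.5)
The plan is to reduce all three bounds to the single observation that each centred observable is uniformly bounded by the oscillation. Since $\mu_t(\widetilde F_t)=0$, we can write
\[
\widetilde F_t(x)=\int\big(F_t(x)-F_t(y)\big)\,\mu_t(dy),
\]
so the triangle inequality gives $\|\widetilde F_t(x)\|_{\op}\le\Delta_{\op}$ for every $x$, by Assumption~\ref{ass:Delta}. This is the argument of \cref{lem:diam_center}, with the bound $D\,\Lip$ replaced by $\Delta_{\op}$.

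\emph{First bound.} Set $\alpha=\frac12 e^{i\phi}s$, so that $|\alpha|=s/2$. Lemma~\ref{lem:exp_bounds}, as used in \cref{lem:W_bounds_curv}, gives
\[
\|W_t(x)\|_{\op}=\|e^{\alpha\widetilde F_t(x)}\|_{\op}\le e^{|\alpha|\,\|\widetilde F_t(x)\|_{\op}}\le e^{s\Delta_{\op}/2}.
\]

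\emph{Second bound.} Apply the perturbation estimate of Lemma~\ref{lem:exp_bounds} to $A=\alpha\widetilde F_t(x)$ and $B=\alpha\widetilde F_t(y)$. Both matrices have norm at most $s\Delta_{\op}/2$, so
\[
\|W_t(x)-W_t(y)\|_{\op}\le e^{s\Delta_{\op}/2}\,\tfrac{s}{2}\,\|\widetilde F_t(x)-\widetilde F_t(y)\|_{\op}.
\]
Centring by a constant does not change differences, so $\|\widetilde F_t(x)-\widetilde F_t(y)\|_{\op}=\|F_t(x)-F_t(y)\|_{\op}\le\Delta_{\op}$. Taking the supremum over $x,y$ gives the second claim.

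\emph{Third bound.} This is the only step needing care. Recall
\[
H_{n,1}=\Theta_{n,1}-\mu_n(\Theta_{n,1}),\qquad \Theta_{n,1}(x)=e^{s\gamma\widetilde F_n(x)},
\]
with $\gamma\in[0,1]$. Because $H_{n,1}$ is centred under $\mu_n$, we can again write
\[
H_{n,1}(x)=\int\big(\Theta_{n,1}(x)-\Theta_{n,1}(y)\big)\,\mu_n(dy),
\]
so $\|H_{n,1}\|_\infty\le\sup_{x,y}\|\Theta_{n,1}(x)-\Theta_{n,1}(y)\|_{\op}$. Applying Lemma~\ref{lem:exp_bounds} with $A=s\gamma\widetilde F_n(x)$ and $B=s\gamma\widetilde F_n(y)$, both of norm at most $s\Delta_{\op}$, yields
\[
\|\Theta_{n,1}(x)-\Theta_{n,1}(y)\|_{\op}\le e^{s\Delta_{\op}}\,s\gamma\,\Delta_{\op}\le s\Delta_{\op}e^{s\Delta_{\op}}.
\]

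The only potential subtlety is the exact form of Lemma~\ref{lem:exp_bounds}. The proof needs it in the form $\|e^A-e^B\|_{\op}\le e^{\max(\|A\|,\|B\|)}\|A-B\|_{\op}$, which is how the paper already applies it in \cref{lem:W_bounds_curv} and \cref{lem:scale_profile_remainders}. If instead only $e^{(\|A\|+\|B\|)/2}$ is available, the constants stay the same, since that quantity is bounded by $e^{\max(\|A\|,\|B\|)}$.
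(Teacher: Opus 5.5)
Your proposal is correct and follows essentially the same route as the paper. Both bound $\|\widetilde F_t\|_\infty\le\Delta_{\op}$ by averaging increments against $\mu_t$, apply \cref{lem:exp_bounds} to get the norm and increment bounds for $W_t$ and $\Theta_{n,1}$, and pass from increments of $\Theta_{n,1}$ to $\|H_{n,1}\|_\infty$ via centring under $\mu_n$, which you spell out more explicitly than the paper's one-line remark (your closing caveat is unnecessary, since \cref{lem:exp_bounds} is already stated with $e^{\max\{\|A\|_{\op},\|B\|_{\op}\}}$).
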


\begin{proof}
Let \(C_t:=\mu_t(F_t)=\E[F_t(X_t)]\). Since \(C_t\) is a convex combination of \(\{F_t(y):y\in\Omega\}\),
for every \(x\in\Omega\),
\[
\|\widetilde F_t(x)\|_{\op}=\|F_t(x)-C_t\|_{\op}
\le \int \|F_t(x)-F_t(y)\|_{\op}\,\mu_t(dy)\le \Delta_{\op}.
\]
Hence
\(
\|W_t(x)\|_{\op}=\big\|e^{\frac12 e^{i\phi}s\widetilde F_t(x)}\big\|_{\op}\le e^{\frac12 s\Delta_{\op}}.
\)

Next, since \(\widetilde F_t\) differs from \(F_t\) by a constant matrix, it has the same oscillation:
\(\sup_{x,y}\|\widetilde F_t(x)-\widetilde F_t(y)\|_{\op}\le \Delta_{\op}\).
By Lemma~\ref{lem:exp_bounds}, for all \(x,y\in\Omega\),
\[
\|W_t(x)-W_t(y)\|_{\op}
\le e^{\|\frac12 e^{i\phi}s\widetilde F_t\|_{\infty}}\cdot \Big|\tfrac12 e^{i\phi}s\Big| \cdot \|\widetilde F_t(x)-\widetilde F_t(y)\|_{\op}
\le \tfrac12 s\Delta_{\op}\, e^{\frac12 s\Delta_{\op}}.
\]
Taking \(y\sim \mu_t\) and using Jensen gives \(\|W_t(x)-\E W_t(X_t)\|_{\op}\le \tfrac12 s\Delta_{\op} e^{\frac12 s\Delta_{\op}}\).

Finally, similarly for \(\Theta_1(x)=e^{\gamma s\widetilde F_n(x)}\) with \(\gamma=\cos\phi\),
\[
\|\Theta_1(x)-\Theta_1(y)\|_{\op}
\le e^{\|\gamma s \widetilde F_n\|_{\infty}}\cdot |\gamma|s\,\|\widetilde F_n(x)-\widetilde F_n(y)\|_{\op}
\le s\Delta_{\op}\,e^{s\Delta_{\op}}.
\]
The last inequality in the statement follows by the definition \(H_{n,1}=\Theta_1-\mu_n(\Theta_1)\).
\end{proof}

We can now bound the coefficients $\{b_{n,i}\}$.

\begin{lemma}\label[lemma]{lem:coeff_spec}
Let \(\phi \in [-\pi/2,\pi/2]\) and \(0 \le s \le 1/(8\Delta_{\op})\). Then,
\begin{enumerate}
\item[(i)] \(b_{n,1}(\phi)\le \exp(2s^2\Delta_{\op}\Delta_{\mathrm F})\).
\item[(ii)] For each \(i\ge 2\),
\begin{equation}\label{eq:beta_i_spec}
b_{n,i}(\phi)
\le
4\, s^2\,\Delta_{\op}\Delta_{\mathrm F}\, e^{s\Delta_{\op}} \prod_{\ell=n-i+2}^{n} (e^{s\Delta_{\op}}\sigma_{\ell}).
\end{equation}
\end{enumerate}
\end{lemma}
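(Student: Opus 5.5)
The plan is to follow the template of \cref{lem:coeff_inhomW}. The only change is that the sup-norm control of the centred remainders, which there came from diameter times Lipschitz constant, is replaced by $L^2(\mu_t)$ control, propagated backwards in time with \cref{lem:scalar_to_matrix_spec}. Throughout I use the bound $\|\widetilde F_t\|_\infty\le\Delta_{\op}$ from the proof of \cref{lem:W_bounds_spec}.

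\emph{Part (i).} Here $B_{n,1}=\mu_n(e^{s\gamma\widetilde F_n})$ with $\mu_n(\widetilde F_n)=0$ and $-\Delta_{\op}\Id\preceq\widetilde F_n\preceq\Delta_{\op}\Id$. Applying Lemma~\ref{lem:matrix_hoeffding} with $R=\Delta_{\op}$, as in the proof of \cref{lem:coeff_inhomW}, gives $b_{n,1}\le\exp(s^2\Delta_{\op}^2/2)$. This is at most $\exp(2s^2\Delta_{\op}\Delta_{\mathrm F})$, because $\Delta_{\op}\le\Delta_{\mathrm F}$ (the operator norm is dominated by the Frobenius norm).

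\emph{Part (ii).} Fix $i\ge2$, set $\tau=n-i+1$ and $G=P_{\tau+1}H_{n,i-1}$, so that $\mu_\tau(G)=0$. The same three-term decomposition as in \eqref{eq:Bi_prefactor_inhomW} gives
\[
\|B_{n,i}\|_{\op}\le 4\norminf{W_\tau}\,\norminf{W_\tau-\mu_\tau W_\tau}\,\mu_\tau(\|G\|_{\op}).
\]
By \cref{lem:W_bounds_spec}, $\norminf{W_\tau}\le e^{s\Delta_{\op}/2}$ and $\norminf{W_\tau-\mu_\tau W_\tau}\le\tfrac12 s\Delta_{\op}e^{s\Delta_{\op}/2}$. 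Hence $\|B_{n,i}\|_{\op}\le 2s\Delta_{\op}e^{s\Delta_{\op}}\,\mu_\tau(\|G\|_{\op})$.

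Next I pass to the matrix $L^2$ norm. Since $\|\cdot\|_{\op}\le\|\cdot\|_{\mathrm F}$, Cauchy--Schwarz gives $\mu_\tau(\|G\|_{\op})\le\|G\|_{2,\mu_\tau}$. The function $H_{n,i-1}$ is centred under $\mu_{\tau+1}$, so \cref{lem:scalar_to_matrix_spec} gives $\|G\|_{2,\mu_\tau}\le\sigma_{\tau+1}\|H_{n,i-1}\|_{2,\mu_{\tau+1}}$.

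The recursion for the remainders then runs as follows. For $j\ge2$, centring does not increase the $L^2$ norm: entrywise, variance is at most the second moment, so $\|H_{n,j}\|_{2,\mu}\le\|\Theta_{n,j}\|_{2,\mu}$. Using $\|AXB\|_{\mathrm F}\le\|A\|_{\op}\|X\|_{\mathrm F}\|B\|_{\op}$ pointwise,
\[
\|\Theta_{n,j}\|_{2,\mu_{n-j+1}}\le e^{s\Delta_{\op}}\|P_{n-j+2}H_{n,j-1}\|_{2,\mu_{n-j+1}}\le e^{s\Delta_{\op}}\sigma_{n-j+2}\|H_{n,j-1}\|_{2,\mu_{n-j+2}},
\]
where the second step is \cref{lem:scalar_to_matrix_spec} again.

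For the base case I write $H_{n,1}(x)=\int(\Theta_{n,1}(x)-\Theta_{n,1}(y))\,\mu_n(dy)$ and need a Frobenius-norm Lipschitz bound for the matrix exponential,
\[
\|e^{A}-e^{B}\|_{\mathrm F}\le e^{\max(\|A\|_{\op},\|B\|_{\op})}\|A-B\|_{\mathrm F}
\]
for Hermitian $A,B$. This follows from the Duhamel formula $e^A-e^B=\int_0^1 e^{uA}(A-B)e^{(1-u)B}\,du$ together with the same Frobenius submultiplicativity. It yields $\|H_{n,1}\|_{2,\mu_n}\le s\Delta_{\mathrm F}e^{s\Delta_{\op}}$. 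I expect this to be the one genuinely new ingredient: Lemma~\ref{lem:exp_bounds} is stated in operator norm, so I must check that the Duhamel argument goes through in Frobenius norm. It does, because both norms are unitarily invariant and submultiplicative against the operator norm. This step is precisely where $\Delta_{\mathrm F}$ enters.

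Chaining these bounds gives
\[
\|H_{n,i-1}\|_{2,\mu_{\tau+1}}\le s\Delta_{\mathrm F}e^{s\Delta_{\op}}\prod_{\ell=\tau+2}^{n}\bigl(e^{s\Delta_{\op}}\sigma_\ell\bigr).
\]
Combining this with the prefactor and the factor $\sigma_{\tau+1}$ gives
\[
b_{n,i}\le 2s^2\Delta_{\op}\Delta_{\mathrm F}\,e^{s\Delta_{\op}}\prod_{\ell=n-i+2}^{n}\bigl(e^{s\Delta_{\op}}\sigma_\ell\bigr),
\]
which is within the stated constant $4$. The hypothesis $s\le 1/(8\Delta_{\op})$ is not needed for these estimates themselves; it only keeps the exponential factors harmless for the later summation.
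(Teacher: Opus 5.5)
Your proposal is correct and follows the paper's proof essentially step for step. For (i) you use Hoeffding with $R=\Delta_{\op}$. For (ii) you use the three-term decomposition \eqref{eq:Bi_prefactor_inhomW}, Cauchy--Schwarz to pass to $\|\cdot\|_{2,\mu_\tau}$, and the backward $L^2$ recursion through \cref{lem:scalar_to_matrix_spec}. The differences are minor. The paper bounds $\|H_{n,1}\|_{2,\mu_n}$ with the symmetrisation identity $\E\|Y-\E Y\|_{\mathrm F}^2=\tfrac12\E\|Y-Y'\|_{\mathrm F}^2$, gaining a factor $1/\sqrt2$, rather than your pointwise Jensen bound. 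The Frobenius-norm Duhamel estimate you flag as the new ingredient is already the last statement of \cref{lem:exp_bounds}. Your remarks that the constant $2$ suffices and that $s\le 1/(8\Delta_{\op})$ is not needed for these estimates are both right.
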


\begin{proof}
The first statement follows from \cref{lem:matrix_hoeffding} and $\|\widetilde F_n\|_{\infty} \le \Delta_{\op} \le  \Delta_{\mathrm F}$.

Fix \(i\ge 2\) and \(\tau=n-i+1\). Let \(G:=P_{\tau+1}H_{n,i-1}\) so that \(\mu_\tau(G)=0\).
Then, by \cref{eq:Bi_prefactor_inhomW}
\[
\|B_{n,i}\|_{\op}
\le
4\,\norminf{W_\tau}\,\norminf{W_\tau-\mu_\tau W_\tau}\,\mu_\tau(\|G\|_{\op}).
\]
By Lemma~\ref{lem:W_bounds_spec}, \(\norminf{W_\tau}\le e^{\frac12 s\Delta_{\op}}\) and \(\norminf{W_\tau-\mu_\tau W_\tau}\le \tfrac12 s\Delta_{\op}e^{\frac12 s\Delta_{\op}}\), hence
\begin{equation}\label{eq:Bi_prefactor_spec}
\|B_{n,i}\|_{\op}\le 2 s\Delta_{\op} \,e^{s\Delta_{\op}}  \mu_\tau(\|G\|_{\op}).
\end{equation}

To bound \(\mu_\tau(\|G\|_{\op})\), use \(\|A\|_{\op}\le \|A\|_{\mathrm{F}}\), Jensen's inequality, and the linearity of the trace:
\[
\mu_\tau(\|G\|_{\op})\le \mu_\tau(\|G\|_{\mathrm{F}})\le \mu_\tau(\|G\|_{\mathrm{F}}^2)^{1/2} = \normtwo{G}{\mu_\tau}.
\]

By \cref{lem:scalar_to_matrix_spec},
\[
\normtwo{G}{\mu_\tau} = \normtwo{P_{\tau+1}H_{n,i-1}}{\mu_\tau} \le \sigma_{\tau+1}\normtwo{H_{n,i-1}}{\mu_{\tau+1}}.
\]

We now bound $\normtwo{H_{n,i}}{\mu_{\tau}}$ recursively as follows:
 \begin{equation}
 \label{eq:rec_Hbound}
\normtwo{H_{n,i}}{\mu_{\tau}}
\le e^{s\Delta_{\op}}\,\sigma_{\tau+1}\,\normtwo{H_{n,i-1}}{\mu_{\tau+1}}.
\end{equation}
To prove \cref{eq:rec_Hbound}, notice that, by definition, \(H_{n,i}=\Theta_i-B_{n,i}\) with \(B_{n,i}=\E[\Theta_i(X_\tau)]\).
Thus \(\normtwo{H_{n,i}}{\mu_\tau}\le \normtwo{\Theta_i}{\mu_\tau}\).
Using \(\Theta_i(x)=W_\tau(x)(P_{\tau+1}H_{n,i-1})(x)W_\tau(x)^\ast\) and \cref{lem:W_bounds_spec},
\[
\normtwo{\Theta_i}{\mu_\tau} \le \|W_\tau\|_{\infty}^2 \normtwo{P_{\tau+1}H_{n,i-1}}{\mu_\tau} \le e^{s\Delta_{\op}}\,\sigma_{\tau + 1}\,\normtwo{H_{n,i-1}}{\mu_{\tau+1}}.
\]

Iteratively applying \cref{eq:rec_Hbound} $i-2$ times yields
\[
\normtwo{G}{\mu_\tau}\le (e^{s\Delta_{\op}})^{i-2}\,\left(\prod_{\ell=\tau+1}^{n}\sigma_{\ell}\right)\,\normtwo{H_{n,1}}{\mu_{n}}.
\]
We next bound $\|H_{n,1}\|_{2,\mu_n}$ using Frobenius oscillation.
Let $X,X'\stackrel{iid}{\sim}\mu_n$. Since $\|\cdot\|_{2,\mu_n}$ is the $L^2(\mu_n)$ norm induced by $\|\cdot\|_{\mathrm F}$,
\[
\|H_{n,1}\|_{2,\mu_n}^2=\E\|\Theta_1(X)-\E\Theta_1(X)\|_{\mathrm F}^2=\tfrac12\,\E\|\Theta_1(X)-\Theta_1(X')\|_{\mathrm F}^2
\le \tfrac12\Big(\sup_{x,y}\|\Theta_1(x)-\Theta_1(y)\|_{\mathrm F}\Big)^2.
\]
By Lemma~\ref{lem:exp_bounds} and Assumption~\ref{ass:Delta},
\[
\sup_{x,y}\|\Theta_1(x)-\Theta_1(y)\|_{\mathrm F}
\le e^{\|\gamma s\widetilde F_n\|_{\infty}}\cdot |\gamma|s\cdot \sup_{x,y}\|\widetilde F_n(x)-\widetilde F_n(y)\|_{\mathrm F}
\le s e^{s\Delta_{\op}}\Delta_{\mathrm F}.
\]
Therefore,
\[
\|H_{n,1}\|_{2,\mu_n}\le \tfrac{s}{\sqrt2}e^{s\Delta_{\op}}\Delta_{\mathrm F}.
\]

Combining this with \eqref{eq:Bi_prefactor_spec} and our assumption $e^{2s\Delta_{\op}} \le \sqrt{2}$ yields
\begin{align*}
\|B_{n,i}\|_{\op}\le 2 s\Delta_{\op} \,e^{s\Delta_{\op}}  \mu_\tau(\|G\|_{\op})
&\le 2\sqrt2\, s^2\Delta_{\op}\Delta_{\mathrm F}\, e^{s\Delta_{\op}} e^{s\Delta_{\op}}  (e^{s\Delta_{\op}})^{i-2}\,\prod_{\ell=\tau+1}^{n}\sigma_{\ell} \\
&\le 4 s^2\Delta_{\op}\Delta_{\mathrm F} \, (e^{s\Delta_{\op}})^i \prod_{\ell=\tau+1}^{n}\sigma_{\ell}.
\end{align*}
Taking $(e^{s\Delta_{\op}})^{i-2}$ inside the product yields \cref{eq:beta_i_spec}.
\end{proof}

Equipped with the previous estimates, \cref{thm:spec} can be proved in a similar way as \cref{thm:curv}.

\ChernoffSpec*

\begin{proof}
If \(\Delta_{\op}\Delta_{\mathrm F}=0\), then \(\Delta_{\op}=0\), since \(\|\cdot\|_{\op}\le \|\cdot\|_{\mathrm F}\). Hence each \(F_j\) is constant on \(\Omega\), so every centered summand vanishes and the result is trivial. We therefore assume \(\Delta_{\op}\Delta_{\mathrm F}>0\).

Set
\[
r_{\max}:=\frac{\pi\lambda}{64\Delta_{\mathrm F}},
\qquad
C:=24\Delta_{\op}\Delta_{\mathrm F}\,\lambda^{-1}.
\]
Fix \(r\in[0,r_{\max}]\), \(\phi \in [-\pi/2,\pi/2]\), and let \(s:=4r/\pi\). Since \(\Delta_{\op}\le \Delta_{\mathrm F}\), we have \(s\Delta_{\op}\le \lambda/16\) and \(s\le 1/(8\Delta_{\op})\). Thus \cref{lem:coeff_spec} applies, and
\begin{align*}
\sum_{i=1}^n b_{n,i}(\phi)
&\le e^{2s^2\Delta_{\op}\Delta_{\mathrm F}}
   +4s^2\Delta_{\op}\Delta_{\mathrm F}e^{2s\Delta_{\op}}
     \sum_{i=2}^n \prod_{\ell=n-i+2}^{n}(e^{s\Delta_{\op}}\sigma_\ell)  \\
&\le 1+8s^2\Delta_{\op}\Delta_{\mathrm F}
     \sum_{i=1}^n \prod_{\ell=n-i+2}^{n}(e^{\lambda/8}\sigma_\ell) \\
&\le 1+C s^2.
\end{align*}
In the last step we apply \cref{lem:weighted_renewal} with \(\kappa_\ell=1-\sigma_\ell\) and \(\kappa=\lambda\), using \cref{ass:lambda}; the tilt \(e^{\lambda/8}\) is bounded by the allowed tilt \(e^{\pi\lambda/24}\).

By \cref{lem:close},
\[
\E_{\mu_0}\tr\exp\!\Bigl(r\sum_{j=1}^n \widetilde F_j(X_j)\Bigr)
\le m^{2-\pi/4}\exp\!\Bigl(\frac{16C}{\pi^2}n r^2\Bigr)
= m^{2-\pi/4}\exp\!\Bigl(\frac12 v_B^2 n r^2\Bigr),
\]
where \(v_B^2=\frac{768}{\pi^2}\frac{\Delta_{\op}\Delta_{\mathrm F}}{\lambda}\). Hence, for every \(r\in[0,r_{\max}]\),
\[
\Pbbmu\!\Bigl(\lmax\Bigl(\sum_{j=1}^n \widetilde F_j(X_j)\Bigr)\ge n\varepsilon\Bigr)
\le
m^{2-\pi/4}\exp\!\Bigl(-n\bigl(r\varepsilon-\tfrac12 v_B^2r^2\bigr)\Bigr).
\]

If \(0\le \varepsilon\le \Delta_{\op}\), then
\[
r^\star:=\frac{\varepsilon}{v_B^2}
\le \frac{\Delta_{\op}}{v_B^2}
=\frac{\pi^2\lambda}{768\Delta_{\mathrm F}}
\le \frac{\pi\lambda}{64\Delta_{\mathrm F}}
=r_{\max},
\]
because \(\pi\le 12\). Choosing \(r=r^\star\) gives
\[
\Pbbmu\!\Bigl(\lmax\Bigl(\sum_{j=1}^n \widetilde F_j(X_j)\Bigr)\ge n\varepsilon\Bigr)
\le m^{2-\pi/4}\exp\left(-\frac{n\varepsilon^2}{2v_B^2}\right).
\]

Finally, for every \(j\),
\[
\|\widetilde F_j(x)\|_{\op}
=\|F_j(x)-\mu_j(F_j)\|_{\op}
\le \int \|F_j(x)-F_j(y)\|_{\op}\,\mu_j(dy)
\le \Delta_{\op}.
\]
Therefore
\[
\lmax\Bigl(\sum_{j=1}^n \widetilde F_j(X_j)\Bigr)
\le \sum_{j=1}^n \|\widetilde F_j(X_j)\|_{\op}
\le n\Delta_{\op},
\]
so the bound is trivial when \(\varepsilon>\Delta_{\op}\).
\end{proof}

%% file: elo.tex
\section{An analysis of the Elo rating system with evolving skills}
\label{sec:elo}

The Elo rating system is a popular method for estimating the relative skill of
players in sports analytics, particularly chess and tennis. It was introduced by
Arpad Elo in the 1950s to rank chess players by assigning them a numerical
score, which is updated according to a simple formula: after a game between two
players \(i\) and \(j\), if the former wins, their rating is increased by an
amount proportional to the probability estimated by the model that \(i\) would
have lost to \(j\); the rating of \(j\) is instead decreased by the same amount.
Unexpected outcomes result in larger changes in ratings.

The effectiveness, simplicity, and interpretability of Elo ratings have made
them, arguably, the most popular ranking system in sports analytics.  More
recently, Elo ratings have also been applied to evaluate and rank large language
models~\citep{boubdir2024elo}.  Despite their popularity, however, not much
attention has been devoted to studying Elo ratings from a theoretical point of
view.  Recently, \citet{EloOurs} have studied Elo ratings under the
Bradley-Terry-Luce (BTL) model, a popular statistical model of match outcomes.
Using techniques from Markov chain theory, they show Elo ratings
well-approximate the true ratings of the players with a sample complexity that
is competitive against the state of the art of BTL estimation.

While these results offer a compelling explanation of the effectiveness of Elo
ratings, they suffer from an important limitation: they require the true skills
of players to remain fixed over time.  Therefore, they fail to investigate one
of the most important properties of an online ranking system: the capability to
detect and track changes in the skills of the players.  To address this gap, we
consider an extension of the BTL model in which the players' true ratings change
over time.  In the setup below, the true ratings and matchup distributions
evolve as an external time-inhomogeneous Markov chain, independently of the
randomness used to select pairs and generate match outcomes.  We give a
sufficient condition under which the joint process containing the Elo
ratings, true ratings, and matchup distributions is itself positively curved,
and then apply the concentration results to show that Elo ratings track the
changing true ratings over time.

\subsection{Related work}
 There is a plethora of work on the Bradley-Terry-Luce model and the problem of learning from pairwise comparison in general. Here we discuss only the most relevant literature. \citet{A:elo-rats,A:elo-unpublished} study Elo rating systems (in the homogeneous setting)  from an applied probability perspective; in particular, the existence and convergence to a stationary distribution of the Elo process is established. \citet{EloOurs} prove Elo recovers the (static) BTL parameters with a sample complexity close to the state-of-the-art. This is done by studying Elo as a Markov chain and bounding the rate of convergence of this (time-homogeneous) chain in a conceptually similar way as  the present work. Since Elo does not converge in total variation, this is done by relating the Elo process to an auxiliary \emph{noisy} version of Elo, so that Chernoff bounds for Markov chains~\citep{L:chernoff-markov-disc,P:conc-psgap} can be applied. This is because the results by \citet{JoulinOllivier} have a suboptimal dependency on the curvature of the chain. Our concentration results, besides working in the more general inhomogeneous setting, have an optimal dependency on the curvature, allowing us to study Elo directly.

Compared to the static setup, theoretical results for BTL estimation in the dynamic setting, where model parameters change over time,  are limited.  \citet{DynamicBTL} adapt the classical Rank Centrality algorithm  for static BTL~\citep{RankCentrality} to the dynamic setting essentially by maintaining the average win-rates between any two players over a sliding window. While it is difficult to present a like-for-like comparison with our results, we note that their algorithm has the following drawbacks compared to ours: it does not maintain an online estimator of the ratings; it requires a very strong Lipschitz condition on how fast the ratings change that needs to be satisfied uniformly for \emph{each pair} of players.

\citet{pmlr-v108-bong20a} develop an algorithm for a similar setup as ours that works by performing kernel smoothing to pre-process the pairwise comparisons over time and then solve a maximum log-likelihood estimation problem for each time step. This makes estimation much more computationally expensive than the intrinsically online Elo rating system.

\subsection{Elo ratings and the dynamic BTL model}

We denote the number of players by \(n\ge2\).  Let \(M\ge0\).  For any time
\(t\in\mathbb N\), we assume each player \(i\) has a \emph{true} rating
\(\rho_i^t\), and denote by \(\rho^t\) the vector of true ratings at time \(t\).
The goal of Elo is to approximate these ratings.  At each time \(t\), a pair of
players is selected according to a probability distribution \(q^t\) over the
\(\binom n2\) unordered pairs of players.
The sequence \((\rho^t,q^t)_{t\in\mathbb N}\) can evolve as a time-inhomogeneous Markov chain, as
long as their evolution is independent of the randomness of matchup selection
and match outcomes.

We assume there exists \(M>1\) such that \(\rho_i^t\in[-M,M]\) for all players
\(i\) and steps \(t\).  This is reasonable since it just means that ratings
cannot diverge.  As typical in the literature, we also assume \(M\) is known.
We further assume that, for all \(t\), \(\sum_{i=1}^n\rho_i^t=0\).  This is
without loss of generality and is needed to be able to approximate the true
rating since it is impossible to distinguish between a vector of true ratings
\(\rho\) and the shifted vector \(\rho+c\vec 1\).

We will denote by \(X_i^t\) the Elo rating of player \(i\) after the first
\(t\) matches have been played.  For simplicity, we initialise \(X_i^0=0\).

We can now formally define the Elo process according to our version of the
dynamic Bradley Terry Luce model.  For any \(t\ge1\), \vskip 0.2cm

1. Select a pair of players \(\{I,J\}\) at random according to \(q^t\):
\(\Pbb(\{I,J\}=\{i,j\}\mid q^t)=q^t_{\{i,j\}}\). \vskip 0.2cm

2. The outcome of the match between \(I\) and \(J\) is a Bernoulli random
variable, conditionally independent of past outcomes given the current
environment, where the probability that \(I\) beats \(J\) is
\(\sigma(\rho_I^t-\rho_J^t)=1/(1+\exp(\rho_J^t-\rho_I^t))\). \vskip 0.2cm

3. Suppose \(I\) beats \(J\).  We first update the Elo ratings of \(I\) and
\(J\) as follows (if instead \(J\) beats \(I\), we swap the roles of \(I\) and
\(J\)):
\begin{align*}
\widehat X_I^t
&\leftarrow X_I^{t-1}+\eta\sigma(X_J^{t-1}-X_I^{t-1}), \quad \text{and}\\
\widehat X_J^t
&\leftarrow X_J^{t-1}-\eta\sigma(X_J^{t-1}-X_I^{t-1}),
\end{align*}
where \(\eta\in(0,1/2)\) is a step-size chosen in advance. All other ratings
remain unchanged: \\
\(
\widehat X_k^t\leftarrow X_k^{t-1} \text{ for all } k\not\in\{I,J\}.
\) \vskip 0.2cm

4. Orthogonally project the vector of ratings \(\widehat X^t\) to
\(\mathcal X_M \coloneqq [-M,M]^n\cap\{x\in\mathbb R^n\colon x \perp \mathbf 1\}\), and call the resulting
vector \(X^t\).\footnote{This is needed to guarantee that Elo ratings do not
diverge and the chain remains in a set with good curvature. We refer to
\citep{EloOurs} for further discussions about this projection step.} \vskip 0.2cm

We next formalise the evolution of the environment \((\rho^t, q^t)_{t\in\mathbb N}\).

\subsection{The Markov chain model and curvature assumptions}

Let \(\mathcal Q\) be a closed
subset of the simplex of probability distributions on unordered pairs \(\{i,j\}\),
\(1\le i<j\le n\), which represents the set of possible matchup distributions.  We denote a state of the environment as
\(
e=(\rho,q)\in \mathcal E\coloneqq \mathcal X_M\times \mathcal Q.
\)
We equip \(\mathcal E\) with the metric
\[
d_{\mathcal E}\bigl((\rho,q),(\widetilde \rho,\widetilde q)\bigr)
\coloneqq
\|\rho-\widetilde \rho\|_2
+2\sqrt 2\,\|q-\widetilde q\|_{\mathrm{TV}},
\]
where
\[
\|q-\widetilde q\|_{\mathrm{TV}}
\coloneqq
\frac12\sum_{1\le i<j\le n}
\left|q_{\{i,j\}}-\widetilde q_{\{i,j\}}\right|.
\]
We denote the joint state of the Elo ratings together with an environment as \(z=(x,e)\in\mathcal Z\coloneqq \mathcal X_M\times
\mathcal E\). We equip $\mathcal Z$ with the metric
\[
d_{\mathcal Z}\bigl((x,e),(\widetilde x,\widetilde e)\bigr)
\coloneqq
\|x-\widetilde x\|_2+d_{\mathcal E}(e,\widetilde e).
\]
We write \(D_{\mathcal Z}\) for the diameter of \((\mathcal Z,d_{\mathcal Z})\).
In particular, \(D_{\mathcal Z}\le 4M\sqrt n+2\sqrt 2\).

The environment at time \(t\) is the pair
\(
E^t\coloneqq(\rho^t,q^t).
\)
We assume that \((E^t)_{t\ge1}\) is a time-inhomogeneous Markov chain on
\(\mathcal E\), with transition kernels \(\Gamma_t\), so that
\[
\Pbb(E^{t+1}\in A\mid E^1,\ldots,E^t)=\Gamma_t(E^t,A).
\]
We formalise the independence between the environment and the Elo randomness through the following
filtrations.  Let
\(
\mathcal G_t\coloneqq \sigma(E^1,\ldots,E^t)
\)
be the information generated by the environment up to time \(t\).  In the
formal construction, after a pair \(A_s\) is selected, we write
\(
A_s=\{I_s,J_s\}
\text{ with } I_s<J_s,
\)
and let \(Y_s=1\) denote the event that player \(I_s\) beats player \(J_s\).
Let \(\mathcal F_t\) be the information available just before the match played
at time \(t\):
\[
\mathcal F_t\coloneqq
\sigma\bigl(E^1,\ldots,E^t,X^0,\{A_s,Y_s:1\le s<t\}\bigr).
\]
Notice \(X^{t-1}\) is
\(\mathcal F_t\)-measurable.  The
conditional probabilities in the numbered description above are to be
understood as
\[
\Pbb(A_t=\{i,j\}\mid \mathcal F_t)=q^t_{\{i,j\}},
\]
and, after writing the selected pair as \(A_t=\{I_t,J_t\}\) with \(I_t<J_t\),
\[
\Pbb(Y_t=1\mid \mathcal F_t,A_t=\{I_t,J_t\})
=\sigma(\rho_{I_t}^t-\rho_{J_t}^t).
\]
Finally, \(E^{t+1}\) is sampled from \(\Gamma_t(E^t,\cdot)\), conditionally
independently of \((A_t,Y_t)\) given \(\mathcal F_t\).  Equivalently, the
joint Elo and environment process
\(
Z^t\coloneqq (X^t,E^t)
\)
is a time-inhomogeneous Markov chain on \(\mathcal Z\).

For a fixed environment \(e=(\rho,q)\), let \(K_e(x,\cdot)\) denote the law
of an Elo update from current Elo vector \(x\).
Therefore, the joint transition kernel from \(Z^t\) to \(Z^{t+1}\) is
\[
P_t\bigl((x,e),d x' d e'\bigr)
=\Gamma_t(e,d e')\,K_{e'}(x,d x').
\]

\begin{remark}
In the construction above, the environment process \((\rho^t,q^t)_{t\ge1}\)
evolves independently of the Elo ratings.  This is for simplicity and can potentially be relaxed.
For example, suppose the matchup
distribution is chosen as a function of the current Elo ratings and true
ratings, say
\(
q^t=Q_t(X^{t-1},\rho^t).
\)
 Our techniques
can be adapted as long as perturbing the current Elo vector cannot cause a
large perturbation of the matchup distribution, i.e., if the maps \(Q_t\) are sufficiently Lipschitz.
\end{remark}

Our results will rely on several assumptions on the curvature of the process. To state these assumptions formally, we need to introduce the following
quantities.  For an environment \(e=(\rho,q)\), define
\[
\lambda(e)\coloneqq
\min_{\substack{v\in\R^n\setminus\{\underline{0}\}\\v\perp \mathbf 1}}
\frac{\sum_{\{i,j\}}q_{\{i,j\}}(v_i-v_j)^2}{\sum_i v_i^2}.
\]
Equivalently, \(\lambda(e)\) is the second smallest eigenvalue of the Laplacian
of the continuous-time random walk on \([n]\) with transition rates
\((q_{\{i,j\}})\).  This essentially quantifies how fast information about results between two
players propagates to the rest of the players, and is a standard quantity in
BTL estimation~\citep{pmlr-v38-shah15,RankCentrality,LSR:btl-mle}.
Notice it holds $\lambda(e) = O(1/n)$ since $\sum_{\{i,j\}} q_{\{i,j\}} = 1$,

We assume there is a deterministic constant \(\lambda>0\)
such that
\[
\lambda(e)\ge \lambda
\qquad\text{for every }e\in\mathcal E.
\]

The following quantity is a lower bound on the curvature of Elo when the
environment is held fixed, that is, it quantifies the rate of convergence in
Wasserstein distance after one step of the Elo update:
\[
\kappa\coloneqq \frac{1}{8}\,\eta e^{-4M}\lambda.
\]

We assume that the environment also contracts in Wasserstein distance.  More
precisely, we assume the environment chain has one-step
curvature at least \(\nu\in(0,1]\) with respect to \(d_{\mathcal E}\):
\[
W_1^{d_{\mathcal E}}\bigl(\Gamma_t(e,\cdot),\Gamma_t(\widetilde e,\cdot)\bigr)
\le
(1-\nu)d_{\mathcal E}(e,\widetilde e)
\qquad
\text{for all }t\ge1\text{ and }e,\widetilde e\in\mathcal E.
\]
Here \(W_1^{d_{\mathcal E}}\) denotes Wasserstein distance computed using the
metric \(d_{\mathcal E}\).  Informally, positive curvature means that the
environment dynamics contract differences between possible realisations of the
true ratings and matchup distributions over time, which we believe is a reasonable assumption.
Finally, we choose the Elo step size so
that
\[
\eta\le \frac{\nu}{2}.
\]
We will show below that the same parameter \(\kappa\) controls both how quickly
Elo forgets its initial ratings and how strongly the joint process
\((X^t,\rho^t,q^t)\) concentrates around its mean.

We start by recalling the curvature of the Elo chain when the environment is fixed.

\begin{lemma}[\citet{EloOurs}]
\label[lemma]{lem:elo_fixed_environment_curvature}
For every \(t\ge1\), every environment \(e \in \mathcal E\), and
every \(x,\widetilde x\in\mathcal X_M\),
\[
W_1^{\|\cdot\|_2}
\bigl(K_e(x,\cdot),K_e(\widetilde x,\cdot)\bigr)
\le
(1-\kappa)\|x-\widetilde x\|_2.
\]
\end{lemma}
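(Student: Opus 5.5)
The plan is to construct a \emph{synchronous coupling} of one Elo step from $x$ and from $\widetilde x$ under the same environment $e=(\rho,q)$. Both copies use the same selected pair $A=\{I,J\}$ and the same match outcome $Y$. This is legitimate because the pair selection depends only on $q$, and the outcome probability $\sigma(\rho_I-\rho_J)$ depends only on $\rho$, not on the current Elo vector. The expected distance under this coupling then bounds $W_1^{\|\cdot\|_2}(K_e(x,\cdot),K_e(\widetilde x,\cdot))$.

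\textbf{Step 1: rewrite the update.} Using $\sigma(x_J-x_I)=1-\sigma(x_I-x_J)$, a match on $\{I,J\}$ with $I<J$ gives
\[
\widehat X=x+\eta\bigl(Y-\sigma(x_I-x_J)\bigr)(e_I-e_J),
\]
with $Y\in\{0,1\}$ indicating that $I$ wins. Set $v=x-\widetilde x$, which satisfies $v\perp\mathbf 1$. With the same $(A,Y)$ in both copies, the $Y$ terms cancel. By the mean value theorem,
\[
\widehat v=v-\eta c_{IJ}(v_I-v_J)(e_I-e_J),\qquad c_{IJ}=\sigma'(\xi).
\]
Here $\xi$ lies between $x_I-x_J$ and $\widetilde x_I-\widetilde x_J$, so $|\xi|\le 2M$. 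Hence $c_{IJ}\in[\tfrac14 e^{-2M},\tfrac14]$, and in particular $c_{IJ}\ge \tfrac14 e^{-4M}$.

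\textbf{Step 2: one-step contraction of the squared norm.} Expanding, $\|e_I-e_J\|^2=2$ gives
\[
\|\widehat v\|^2=\|v\|^2-\bigl(2\eta c_{IJ}-2\eta^2c_{IJ}^2\bigr)(v_I-v_J)^2\le \|v\|^2-\eta c_{IJ}(v_I-v_J)^2,
\]
since $\eta c_{IJ}\le 1/8\le 1/2$. Averaging over $A\sim q$ and using the definition of $\lambda(e)\ge\lambda$ (valid because $v\perp\mathbf 1$) yields
\[
\E\|\widehat v\|^2\le\Bigl(1-\tfrac14\eta e^{-4M}\lambda\Bigr)\|v\|^2.
\]

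\textbf{Step 3: projection and passage to $W_1$.} The updates preserve the coordinate sum, so both $\widehat X$ and its tilde counterpart lie in $\{x\perp\mathbf 1\}$. Orthogonal projection onto the closed convex set $\mathcal X_M$ is $1$-Lipschitz, so $\|X'-\widetilde X'\|_2\le\|\widehat v\|$. By Jensen's inequality and $\sqrt{1-a}\le 1-a/2$,
\[
\E\|X'-\widetilde X'\|_2\le\sqrt{1-a}\,\|v\|\le\Bigl(1-\tfrac18\eta e^{-4M}\lambda\Bigr)\|x-\widetilde x\|_2=(1-\kappa)\|x-\widetilde x\|_2,
\]
where $a=\tfrac14\eta e^{-4M}\lambda$.

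\textbf{Main obstacle.} The delicate point is the uniform lower bound on the local slope $c_{IJ}$. This needs Elo ratings to stay bounded, which is exactly what the projection onto $[-M,M]^n$ guarantees, and it needs care that projecting after the update does not undo the contraction. Nonexpansiveness of projection onto a convex set handles the latter. The remaining steps are routine.
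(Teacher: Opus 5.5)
Your proof is correct and follows the standard synchronous-coupling argument from \citet{EloOurs}, which the paper cites rather than reproves. The steps match that argument: same pair and same outcome in both copies, the mean value theorem for $\sigma$, contraction of the squared distance via $\lambda(e)\ge\lambda$, non-expansiveness of the projection onto the convex set $\mathcal X_M$, and Jensen with $\sqrt{1-a}\le 1-a/2$. Your bound $\sigma'(\xi)\ge\frac14 e^{-2M}$ actually yields the contraction with $e^{-2M}$ in place of $e^{-4M}$, so the stated $\kappa$ follows with room to spare.
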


We then bound how sensitive the Elo update is to changes in the environment.

\begin{lemma}
\label[lemma]{lem:elo_environment_sensitivity}
For every \(t\ge1\), every \(x\in\mathcal X_M\), and every
\(e,\widetilde e\in\mathcal E\),
\[
W_1^{\|\cdot\|_2}\bigl(K_e(x,\cdot),K_{\widetilde e}(x,\cdot)\bigr)
\le
\eta\,d_{\mathcal E}(e,\widetilde e).
\]
\end{lemma}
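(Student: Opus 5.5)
We prove \cref{lem:elo_environment_sensitivity} with an explicit synchronous coupling of the two one-step Elo updates, followed by the $1$-Lipschitz property of orthogonal projection onto the closed convex set $\mathcal X_M$. Write $e=(\rho,q)$ and $\widetilde e=(\widetilde\rho,\widetilde q)$.

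\emph{Step 1: couple the pair selection.} Take a maximal coupling of $q$ and $\widetilde q$, so the selected pairs $A$ and $\widetilde A$ differ with probability $\|q-\widetilde q\|_{\mathrm{TV}}$.

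\emph{Step 2: couple the outcome on $\{A=\widetilde A=\{i,j\}\}$.} Use a common uniform variable $U$: player $i$ wins under $e$ iff $U\le\sigma(\rho_i-\rho_j)$, and under $\widetilde e$ iff $U\le\sigma(\widetilde\rho_i-\widetilde\rho_j)$. The outcomes then differ with probability $|\sigma(\rho_i-\rho_j)-\sigma(\widetilde\rho_i-\widetilde\rho_j)|\le\frac14|(\rho_i-\widetilde\rho_i)-(\rho_j-\widetilde\rho_j)|$, since $\sigma$ is $\tfrac14$-Lipschitz.

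\emph{Step 3: bound the distance between the updated vectors.} Both updates start from the same $x$.
\begin{itemize}[label={}]
\item When pair and outcome agree, the pre-projection updates coincide, so $\widehat X=\widehat{\widetilde X}$.
\item When the pair agrees but the outcome differs, the two updates are $x\pm\eta a(e_i-e_j)$ with $a\in(0,1)$ the corresponding sigmoid, and the winner's increment is $\eta\sigma(x_j-x_i)$ or $\eta\sigma(x_i-x_j)$. The difference is therefore $\eta(\sigma(x_j-x_i)+\sigma(x_i-x_j))(e_i-e_j)=\eta(e_i-e_j)$, of norm $\sqrt2\,\eta$.
\item When the pairs differ, each update moves at most two coordinates by less than $\eta$ each, so each displacement has norm at most $\sqrt2\,\eta$. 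The difference is at most $2\sqrt2\,\eta$.
\end{itemize}
Projection onto $\mathcal X_M$ is nonexpansive, so the same bounds hold after step 4 of the update.

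\emph{Step 4: average and sum.} The pairs-differ case contributes at most $2\sqrt2\,\eta\,\|q-\widetilde q\|_{\mathrm{TV}}$, which matches the environment term in $d_{\mathcal E}$. The outcomes-differ case contributes
\[
\sqrt2\,\eta\sum_{\{i,j\}}q_{\{i,j\}}\tfrac14|\delta_i-\delta_j|,
\qquad\text{where }\delta=\rho-\widetilde\rho.
\]
Since $|\delta_i-\delta_j|\le\sqrt2\|\delta\|_2$ and $\sum q=1$, this is at most $\tfrac{\sqrt2\cdot\sqrt2}{4}\eta\|\delta\|_2=\tfrac{\eta}{2}\|\rho-\widetilde\rho\|_2$. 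Both terms together give $\eta\,d_{\mathcal E}(e,\widetilde e)$, and the coupling bounds $W_1$.

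The step needing the most care is Step 3: tracking the magnitudes of the Elo increments correctly, and in particular noticing the identity $\sigma(u)+\sigma(-u)=1$ in the outcomes-differ case, which makes the constants fit within the factor $2\sqrt2$ built into $d_{\mathcal E}$.
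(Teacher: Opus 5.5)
Your proof is correct and uses the same ingredients as the paper's: a maximal coupling of the selected pair, a common uniform for the outcome, the $\tfrac14$-Lipschitz bound on $\sigma$, and nonexpansiveness of the projection. The only difference is that you build one joint coupling, whereas the paper treats the $\rho$- and $q$-perturbations separately and combines them by the triangle inequality for $W_1$ through the intermediate environment $e^\star=(\widetilde\rho,q)$; your identity $\sigma(u)+\sigma(-u)=1$ also justifies the $\sqrt2\,\eta$ bound that the paper simply asserts (note that under your joint coupling $\{A=\widetilde A=\{i,j\}\}$ has probability $\min(q_{\{i,j\}},\widetilde q_{\{i,j\}})\le q_{\{i,j\}}$, so your displayed sum is an upper bound on that contribution, which suffices).
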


\begin{proof}
Write \(e=(\rho,q)\) and \(\widetilde e=(\widetilde \rho,\widetilde q)\).
First suppose \(q=\widetilde q\).  Couple the selected pair of players to be the same and
use the same uniform random variable to generate the two match outcomes.  For a
fixed pair \(\{i,j\}\), the probability that the two outcomes disagree is at
most
\[
\left|\sigma(\rho_i-\rho_j)-\sigma(\widetilde\rho_i-\widetilde\rho_j)\right|
\le
\frac14\left|(\rho_i-\rho_j)-(\widetilde\rho_i-\widetilde\rho_j)\right|
\le
\frac{1}{2\sqrt2}\|\rho-\widetilde\rho\|_2.
\]
If the outcomes agree, the two updates from the same \(x\) coincide.  If they
disagree, the two unprojected updates differ by at most \(\sqrt2\,\eta\) in
Euclidean norm, and the projection onto \(\mathcal X_M\) is non-expansive.
Hence this part contributes at most
\(
\frac{\eta}{2}\|\rho-\widetilde\rho\|_2.
\)

Now suppose \(\rho=\widetilde\rho\).  Couple the selected pairs maximally, so
that the probability of selecting different pairs is
\(\|q-\widetilde q\|_{\mathrm{TV}}\).  If the selected pair is the same, use the
same outcome.  If the selected pairs differ, each one-step Elo update
lies within distance \(\sqrt2\,\eta\) of \(x\), and therefore the two updated
vectors are within distance \(2\sqrt2\,\eta\) of each other.  This contributes
at most
\(
2\sqrt2\,\eta\|q-\widetilde q\|_{\mathrm{TV}}.
\)
For general \(e=(\rho,q)\) and
\(\widetilde e=(\widetilde\rho,\widetilde q)\), introduce the intermediate
environment
\(
e^\star\coloneqq(\widetilde\rho,q).
\)
By the triangle inequality for Wasserstein distance,
\begin{align*}
W_1^{\|\cdot\|_2}
\bigl(K_{(\rho,q)}(x,\cdot),
K_{(\widetilde\rho,\widetilde q)}(x,\cdot)\bigr)
&\le
W_1^{\|\cdot\|_2}
\bigl(K_{(\rho,q)}(x,\cdot),
K_{(\widetilde\rho,q)}(x,\cdot)\bigr)
+W_1^{\|\cdot\|_2}
\bigl(K_{(\widetilde\rho,q)}(x,\cdot),
K_{(\widetilde\rho,\widetilde q)}(x,\cdot)\bigr) \\
&\le
\frac{\eta}{2}\|\rho-\widetilde\rho\|_2
+2\sqrt2\,\eta\|q-\widetilde q\|_{\mathrm{TV}} \\
&\le \eta d_{\mathcal E}\bigl((\rho,q),(\widetilde\rho,\widetilde q)\bigr).
\end{align*}
\end{proof}

We can now establish a bound on the curvature of the joint Elo and environment chain.

\begin{lemma}
\label[lemma]{lem:elo_augmented_curvature}
For every \(t\ge1\) and every \(z,\widetilde z\in\mathcal Z\),
\[
W_1^{d_{\mathcal Z}}\bigl(P_t(z,\cdot),P_t(\widetilde z,\cdot)\bigr)
\le
(1-\kappa)d_{\mathcal Z}(z,\widetilde z).
\]
\end{lemma}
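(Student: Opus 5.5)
We construct an explicit coupling of $P_t(z,\cdot)$ and $P_t(\widetilde z,\cdot)$ in two stages, following the product structure $P_t((x,e),dx'\,de')=\Gamma_t(e,de')K_{e'}(x,dx')$. Write $z=(x,e)$ and $\widetilde z=(\widetilde x,\widetilde e)$.

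First, couple the environments. Let $(E',\widetilde E')$ be an optimal coupling of $\Gamma_t(e,\cdot)$ and $\Gamma_t(\widetilde e,\cdot)$ for $d_{\mathcal E}$ (it exists since $\mathcal E$ is compact Polish). The environment curvature assumption then gives
\[
\E\, d_{\mathcal E}(E',\widetilde E')\le(1-\nu)\,d_{\mathcal E}(e,\widetilde e).
\]

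Second, conditionally on $(E',\widetilde E')=(e',\widetilde e')$, couple the Elo updates. By the triangle inequality for $W_1$, passing through the intermediate law $K_{e'}(\widetilde x,\cdot)$,
\[
W_1\bigl(K_{e'}(x,\cdot),K_{\widetilde e'}(\widetilde x,\cdot)\bigr)\le W_1\bigl(K_{e'}(x,\cdot),K_{e'}(\widetilde x,\cdot)\bigr)+W_1\bigl(K_{e'}(\widetilde x,\cdot),K_{\widetilde e'}(\widetilde x,\cdot)\bigr).
\]
Lemma~\ref{lem:elo_fixed_environment_curvature} bounds the first term by $(1-\kappa)\|x-\widetilde x\|_2$, and Lemma~\ref{lem:elo_environment_sensitivity} bounds the second by $\eta\, d_{\mathcal E}(e',\widetilde e')$. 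We take a measurable selection of (near-)optimal couplings in $(e',\widetilde e')$. The Elo kernel is a finite mixture over pairs and outcomes, with probabilities continuous in $e'$, so a measurable selection of optimal couplings is available; alternatively, one can argue via $\varepsilon$-optimal couplings or Kantorovich duality with test functions. Gluing the two stages yields a coupling $(X',E',\widetilde X',\widetilde E')$.

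Taking expectations,
\[
\E\, d_{\mathcal Z}\le(1-\kappa)\|x-\widetilde x\|_2+(1+\eta)(1-\nu)\,d_{\mathcal E}(e,\widetilde e).
\]
It remains to show $(1+\eta)(1-\nu)\le 1-\kappa$. Using $\eta\le\nu/2$, we have $(1+\eta)(1-\nu)\le 1-\nu/2-\nu^2/2\le 1-\nu/2$. Moreover, $\kappa=\eta e^{-4M}\lambda/8\le\eta/8\le\nu/16$, using $\lambda\le 1$ (indeed $\lambda(e)=O(1/n)$ since the $q$ sum to one; the bound $\lambda(e)\le 1$ can be checked directly by plugging in $v=e_1-e_2$). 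Hence $1-\nu/2\le 1-\kappa$.

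The main obstacle is the measurability and gluing in the second stage, since the conditional couplings must depend measurably on $(e',\widetilde e')$. I would handle this with a measurable selection of optimal couplings, or by working with the dual formulation directly: for $1$-Lipschitz $f$ on $\mathcal Z$, bound $|P_tf(z)-P_tf(\widetilde z)|$ by integrating the conditional estimates, after checking that $e'\mapsto\int f(x',e')K_{e'}(\widetilde x,dx')$ is $(1+\eta)$-Lipschitz in $d_{\mathcal E}$. This Lipschitz bound follows from Lemma~\ref{lem:elo_environment_sensitivity} together with the Lipschitz dependence of $f$ on $e'$.
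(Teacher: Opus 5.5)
Your proof is correct and follows the paper's argument. You optimally couple the environments, then conditionally couple the Elo updates via the fixed-environment curvature lemma and the environment-sensitivity lemma, and conclude from $(1+\eta)(1-\nu)\le 1-\nu/2\le 1-\kappa$. Your explicit triangle inequality through $K_{e'}(\widetilde x,\cdot)$ and your handling of measurability (e.g.\ via the dual formulation) make precise what the paper leaves implicit.

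One small correction: $\lambda\le 1$ is false for $n=2$, where $\lambda(e)=2$, and plugging in $v=e_1-e_2$ only gives $\lambda(e)\le 2$ in general. However, $\lambda\le 2$ already yields $\kappa\le \eta/4\le \nu/8$, so your conclusion is unaffected.
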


\begin{proof}
Write \(z=(x,e)\) and \(\widetilde z=(\widetilde x,\widetilde e)\).  Choose an optimal coupling of
\(E'\sim\Gamma_t(e,\cdot)\) and
\(\widetilde E'\sim\Gamma_t(\widetilde e,\cdot)\).  Then
\[
\E d_{\mathcal E}(E',\widetilde E')
=
W_1^{d_{\mathcal E}}\bigl(\Gamma_t(e,\cdot),
\Gamma_t(\widetilde e,\cdot)\bigr)
\le
(1-\nu)d_{\mathcal E}(e,\widetilde e).
\]

Conditional on \(E'\) and \(\widetilde E'\), choose a coupling of
$X'\sim K_{E'}(x,\cdot)$ and $\widetilde X'\sim K_{\widetilde E'}(\widetilde x,\cdot)$
\[
X'\sim K_{E'}(x,\cdot),
\qquad
\widetilde X'\sim K_{\widetilde E'}(\widetilde x,\cdot).
\]
By combining \cref{lem:elo_fixed_environment_curvature} with
\cref{lem:elo_environment_sensitivity}, we may choose this conditional
coupling so that
\[
\E\!\left[
\|X'-\widetilde X'\|_2
\middle| E',\widetilde E'
\right]
\le
(1-\kappa)\|x-\widetilde x\|_2
+\eta d_{\mathcal E}(E',\widetilde E').
\]

Summing the contribution from both the environment and the Elo ratings in the metric \(d_{\mathcal Z}\), we obtain
\[
W_1^{d_{\mathcal Z}}\bigl(P_t(z,\cdot),P_t(\widetilde z,\cdot)\bigr)
\le
(1-\kappa)\|x-\widetilde x\|_2
+(1+\eta)(1-\nu)d_{\mathcal E}(e,\widetilde e).
\]
Since \(\eta\le\nu/2\) and \(\kappa\le\eta\),
\[
(1+\eta)(1-\nu)\le 1-\nu+\eta\le 1-\frac{\nu}{2}\le 1-\kappa,
\]
and the claim follows.
\end{proof}

\subsection{Main results}

To ensure Elo is able to track the evolving true ratings, we need to quantify
how fast the true ratings can change.  To this aim, we assume there exists a
deterministic constant \(\Delta\ge0\) such that, almost surely, for every
\(t\ge1\),
\[
\E\!\left[
\|\rho^{t+1}-\rho^t\|_2^2
+4M\|\rho^{t+1}-\rho^t\|_1
\middle| \mathcal F_t
\right]
\le \Delta.
\]
This quantity measures how much the moving target can change in one step.   Unsurprisingly, a smaller value of the drift will allow Elo to
track the true ratings more closely.
\footnote{ The
\(\ell_1\) term comes from the cross term
\(\langle X^t-\rho^t,\rho^t-\rho^{t+1}\rangle\), which can be upper bounded by
\(2M\|\rho^{t+1}-\rho^t\|_1\) because both \(X^t\) and \(\rho^t\) belong to
\([-M,M]^n\). We could potentially upper bound the cross term  by:
\(\langle X^t-\rho^t,\rho^t-\rho^{t+1}\rangle
\le \frac{\varepsilon}{2}\|X^t-\rho^t\|_2^2
+\frac{1}{2\varepsilon}\|\rho^t-\rho^{t+1}\|_2^2\), for all
\(\varepsilon>0\).  Choosing \(\varepsilon=\kappa\) would allow us to require
a bound only on the simpler quantity
\(\E[\|\rho^t-\rho^{t+1}\|_2^2\mid\mathcal F_t]\), but it would also increase
the dependency on \(\kappa^{-1}\) from linear to quadratic.}

The first lemma shows Elo rating track the true skills in expectation.  It exploits contraction
of the Elo update with a fixed environment.

\begin{lemma}
\label[lemma]{lem:elo_augmented_expect}
For every \(t\ge1\), it holds
\[
\E\|X^t-\rho^t\|_2^2
\le
(1-\kappa)^{t-1}\|X^0-\rho^1\|_2^2
+\frac{\Delta}{\kappa}
+\frac{2\eta^2}{\kappa}.
\]
\end{lemma}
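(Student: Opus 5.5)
Write $Y^t:=X^t-\rho^{t+1}$ for the discrepancy just before match $t+1$, so that $X^{t-1}$ and $\rho^t$ are $\mathcal F_t$-measurable. The plan is to derive a one-step recursion
\[
\E\bigl[\|X^t-\rho^t\|_2^2\mid\mathcal F_t\bigr]\le (1-\kappa)\|X^{t-1}-\rho^t\|_2^2+2\eta^2,
\]
then account for the environment moving from $\rho^t$ to $\rho^{t+1}$, and finally unroll.

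\emph{Step 1: fixed-environment contraction towards the truth.} For a fixed environment $e=(\rho,q)$ with $\rho\in\mathcal X_M$, we have $\mathcal X_M\ni\rho$, and orthogonal projection onto the convex set $\mathcal X_M$ is non-expansive with $\rho$ fixed. It therefore suffices to bound the unprojected update $\widehat X$:
\[
\|\widehat X-\rho\|^2=\|x-\rho\|^2+2\langle x-\rho,\widehat X-x\rangle+\|\widehat X-x\|^2 .
\]
The last term is at most $2\eta^2$, since two coordinates move by at most $\eta$. For the middle term, the conditional mean of the update on pair $\{i,j\}$ is $\eta\bigl(\sigma(\rho_i-\rho_j)-\sigma(x_i-x_j)\bigr)(e_i-e_j)$. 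Hence
\[
\E\langle x-\rho,\widehat X-x\rangle=-\eta\sum_{\{i,j\}}q_{\{i,j\}}\bigl((x_i-x_j)-(\rho_i-\rho_j)\bigr)\bigl(\sigma(x_i-x_j)-\sigma(\rho_i-\rho_j)\bigr).
\]
All differences lie in $[-2M,2M]$, so by monotonicity and the mean value theorem $\sigma'\ge e^{-2M}/4\ge e^{-4M}/4$. With $v=x-\rho\perp\mathbf 1$, this gives
\[
\E\langle x-\rho,\widehat X-x\rangle\le -\eta\frac{e^{-4M}}{4}\lambda\|v\|^2=-2\kappa\|v\|^2 .
\]
Combining, $\E\|X^t-\rho^t\|^2\le(1-4\kappa)\|X^{t-1}-\rho^t\|^2+2\eta^2$, which is more than enough. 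This is the same computation underlying \cref{lem:elo_fixed_environment_curvature}, but taken towards the deterministic point $\rho$ rather than via coupling.

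\emph{Step 2: moving target.} Write $\|X^t-\rho^{t+1}\|^2=\|X^t-\rho^t\|^2+2\langle X^t-\rho^t,\rho^t-\rho^{t+1}\rangle+\|\rho^t-\rho^{t+1}\|^2$. Since $X^t,\rho^t\in[-M,M]^n$, the cross term is bounded by $4M\|\rho^{t+1}-\rho^t\|_1$, as indicated in the footnote. Conditioning on $\mathcal F_t$ and using the drift assumption (together with the independence of the environment step from the match, via the tower property) adds at most $\Delta$. This yields
\[
a_{t+1}\le(1-\kappa)a_t+2\eta^2+\Delta,\qquad a_t:=\E\|X^{t-1}-\rho^t\|^2 .
\]

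\emph{Step 3: unroll.} Iterating gives $a_t\le(1-\kappa)^{t-1}a_1+(\Delta+2\eta^2)/\kappa$ with $a_1=\|X^0-\rho^1\|^2$. Then $\E\|X^t-\rho^t\|^2\le(1-\kappa)a_t+2\eta^2$ by Step 1. Absorbing the extra $2\eta^2$ is the main obstacle: Step 1 actually gives a factor $(1-4\kappa)$, so the slack can be used to tighten the recursion and land exactly on the stated constants $\Delta/\kappa+2\eta^2/\kappa$. The other point needing care is the measurability and order of conditioning in Step 2, since the bound for $\rho^{t+1}$ is conditioned on $\mathcal F_t$ while $X^t$ depends on the time-$t$ match. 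This is handled by first bounding the cross term pathwise via the $\ell_1$ bound, so that only $\E[\,\cdot\mid\mathcal F_t]$ of a function of $\rho^t,\rho^{t+1}$ is needed.
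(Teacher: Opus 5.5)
Your proposal is correct and follows the paper's proof. It uses a one-step fixed-environment contraction towards $\rho^t$ (the paper cites this from the earlier Elo analysis, whereas you re-derive it, even with the stronger factor $1-4\kappa$), a pathwise $4M\|\rho^{t+1}-\rho^t\|_1$ bound on the cross term combined with the drift assumption via the tower property, and unrolling of the resulting linear recursion. The step you flag as the main obstacle is immediate and needs no slack, since $(1-\kappa)\bigl(\frac{\Delta}{\kappa}+\frac{2\eta^2}{\kappa}\bigr)+2\eta^2\le\frac{\Delta}{\kappa}+\frac{2\eta^2}{\kappa}$; the paper sidesteps it altogether by running the recursion directly on $\E\|X^t-\rho^t\|_2^2$.
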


\begin{proof}
\citet[Proof of Theorem 2.7]{EloOurs} have shown that, for a fixed environment,  an Elo update satisfies
\[
\E\!\left[
\|X^t-\rho^t\|_2^2
\middle| \mathcal F_t
\right]
\le
(1-\kappa)\|X^{t-1}-\rho^t\|_2^2+2\eta^2.
\]
For \(t\ge2\), since \(X^{t-1}\) and \(\rho^{t-1}\) both belong to
\(\mathcal X_M\),
\[
2\langle X^{t-1}-\rho^{t-1},\rho^{t-1}-\rho^t\rangle
\le
4M\|\rho^t-\rho^{t-1}\|_1.
\]
Therefore
\[
\|X^{t-1}-\rho^t\|_2^2
\le
\|X^{t-1}-\rho^{t-1}\|_2^2
+\|\rho^t-\rho^{t-1}\|_2^2
+4M\|\rho^t-\rho^{t-1}\|_1.
\]
Using the drift assumption and then taking expectations,
\[
\E\|X^t-\rho^t\|_2^2
\le
(1-\kappa)\E\|X^{t-1}-\rho^{t-1}\|_2^2+2\eta^2+\Delta.
\]
The case \(t=1\) satisfies the same bound without the drift term, with
\(\|X^0-\rho^1\|_2^2\) as the initial error.  Iterating the recursion and summing
the resulting geometric series proves the claim.
\end{proof}

We now apply \cref{thm:conc} to obtain concentration of the Elo rating at time t around the true rating.
\cref{thm:conc} is a pointwise estimate and require a bound on the \emph{granularity} of the chain,
which essentially implies the environment (and the true ratings in particular) can never change abruptly.
More precisely, we assume there exists
deterministic constants \(h_\rho,h_q\ge0 < \infty\) such that, for every
\(t\ge1\), every \(e=(\rho,q)\in\mathcal E\), and every
\((\rho',q')\in\operatorname{Supp}\Gamma_t(e,\cdot)\),
\[
\|\rho'-\rho\|_2\le h_\rho,
\qquad
\|q'-q\|_{\mathrm{TV}}\le h_q.
\]
Then, the one-step support diameter of the augmented chain, computed in
\(d_{\mathcal Z}\), is bounded by
\[
2\sqrt2\,\eta+2h_\rho+4\sqrt2\,h_q.
\]

\begin{theorem}
\label[theorem]{thm:elo_augmented_point}
Let \(B\coloneqq 2\sqrt2\,\eta+2h_\rho+4\sqrt2\,h_q\).  There exists a
universal constant \(C>0\) such that, for every \(\epsilon>0\), if the burn-in
time satisfies
\[
t\ge C\kappa^{-1}\log(nM\epsilon^{-1}\eta^{-1}),
\]
then
\[
\Pbb\left(
\begin{aligned}
\|X^t-\rho^t\|_2
&\ge
\sqrt{\frac{\Delta}{\kappa}}
+(1+\epsilon)\sqrt{\frac{2\eta^2}{\kappa}}
+C\epsilon\,\frac{B}{\sqrt\kappa}
\end{aligned}
\right)
\le
2e^{-\epsilon^2}.
\]
\end{theorem}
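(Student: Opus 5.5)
The proof is a bias–fluctuation decomposition. We control the mean of $\|X^t-\rho^t\|_2$ through \cref{lem:elo_augmented_expect}, and we control its fluctuations around that mean with the pointwise concentration result \cref{thm:conc}, applied to the joint chain $Z^t=(X^t,E^t)$ on $(\mathcal Z,d_{\mathcal Z})$. I have not seen the exact statement of \cref{thm:conc}. I assume it is the inhomogeneous, Joulin–Ollivier-type Gaussian concentration for a Lipschitz function of $Z^t$, with variance proxy of order $\Lip(f)^2\sigma_\infty^2\kappa^{-1}$, where $\sigma_\infty$ is the one-step support diameter. The rest of the plan is written under that assumption.

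\emph{Step 1: verify the hypotheses of \cref{thm:conc}.}
\begin{itemize}
\item \textbf{Curvature.} \cref{lem:elo_augmented_curvature} gives one-step curvature at least $\kappa$ uniformly in $t$, so the uniform assumption holds with this $\kappa$.
\item \textbf{Granularity.} From a state $(x,\rho,q)$, the Elo update moves $x$ by at most $\sqrt2\eta$, since projection is non-expansive. The environment moves by at most $h_\rho+2\sqrt2h_q$ in $d_{\mathcal E}$. Doubling these for the diameter of the support gives $\diam\supp P_t(z,\cdot)\le B$, as stated.
\item \textbf{Observable.} Take $f(z)=\|x-\rho\|_2$. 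By the triangle inequality $|f(z)-f(\widetilde z)|\le\|x-\widetilde x\|_2+\|\rho-\widetilde\rho\|_2\le d_{\mathcal Z}(z,\widetilde z)$, so $\Lip(f)\le1$.
\end{itemize}
Applying \cref{thm:conc} to $f(Z^t)$ should then give, for every $\epsilon>0$, with probability at least $1-2e^{-\epsilon^2}$,
\[
\|X^t-\rho^t\|_2\le \E\|X^t-\rho^t\|_2+C\epsilon\,\frac{B}{\sqrt\kappa}.
\]
The factor $2$ is the two-sided constant; only the upper tail is needed here.

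\emph{Step 2: bound the mean.} By Jensen and \cref{lem:elo_augmented_expect}, together with $\sqrt{a+b+c}\le\sqrt a+\sqrt b+\sqrt c$,
\[
\E\|X^t-\rho^t\|_2\le(1-\kappa)^{(t-1)/2}\|X^0-\rho^1\|_2+\sqrt{\Delta/\kappa}+\sqrt{2\eta^2/\kappa}.
\]
Since $X^0=0$ and $\rho^1\in[-M,M]^n$, we have $\|X^0-\rho^1\|_2\le M\sqrt n$. The transient term is therefore at most $e^{-\kappa(t-1)/2}M\sqrt n$. We want it to be at most $\epsilon\sqrt{2\eta^2/\kappa}$; since $\kappa\le1$, it suffices that it is at most $\sqrt2\,\epsilon\eta$. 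This holds as soon as $t\ge 1+2\kappa^{-1}\log(M\sqrt n/(\epsilon\eta))$, which is implied by $t\ge C\kappa^{-1}\log(nM\epsilon^{-1}\eta^{-1})$ for a suitable universal $C$. Combining Steps 1 and 2 yields exactly the threshold $\sqrt{\Delta/\kappa}+(1+\epsilon)\sqrt{2\eta^2/\kappa}+C\epsilon B/\sqrt\kappa$.

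\emph{Expected obstacle.} The main uncertainty is Step 1: matching the precise form of \cref{thm:conc}.
\begin{itemize}
\item It may be stated in the sub-Gaussian window form $\exp(-r^2\kappa/(CB^2))$, possibly with a restricted range of $r$, or in terms of a local variance rather than $B^2$. In that case I would check that $r=C\epsilon B/\sqrt\kappa$ lies in the admissible range, or else absorb the exponential-tail regime into the constant $C$.
\item It may require a finite diameter or contraction in a specific metric. Both are available: $D_{\mathcal Z}\le 4M\sqrt n+2\sqrt2$ is finite, and the contraction from \cref{lem:elo_augmented_curvature} is in $d_{\mathcal Z}$, the same metric in which $f$ is $1$-Lipschitz.
\item The burn-in condition must be applied to the mean, which carries the initial bias, and not to the fluctuation term. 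The fluctuation bound is uniform in the starting law, so it does not need the burn-in.
\end{itemize}
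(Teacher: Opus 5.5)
Your proposal is correct and matches the paper's proof: the paper likewise bounds $\E\|X^t-\rho^t\|_2$ via \cref{lem:elo_augmented_expect} and Jensen, then applies \cref{thm:conc} to the $1$-Lipschitz observable $f(x,\rho,q)=\|x-\rho\|_2$. That theorem gives exactly the form you assumed, $2\exp(-r^2\kappa/(8B^2))$, and the paper takes $r=C\epsilon B/\sqrt\kappa$ with the burn-in absorbing the transient term. Your explicit checks fill in details the paper leaves implicit, namely the support-diameter bound $B$, the Lipschitz constant of $f$, and the burn-in arithmetic (which needs $C\ge 2\sqrt2$ for the tail and a modest constant for the logarithm).
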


\begin{proof}
The function \(f:\mathcal Z\to\R\) defined by
\[
f(x,\rho,q)=\|x-\rho\|_2
\]
is \(1\)-Lipschitz with respect to \(d_{\mathcal Z}\).  By
\cref{lem:elo_augmented_expect} and Jensen's inequality,
\[
\E f(Z^t)
\le
(1-\kappa)^{(t-1)/2}\|X^0-\rho^1\|_2
+\sqrt{\frac{\Delta}{\kappa}}
+\sqrt{\frac{2\eta^2}{\kappa}}.
\]
By \cref{lem:elo_augmented_curvature}, the joint chain satisfies the
positive-curvature assumption with curvature \(\kappa\).  Applying
\cref{thm:conc} to the scalar observable \(f\) gives
\[
\Pbb\left(f(Z^t)\ge \E f(Z^t)+r\right)
\le
2\exp\left(-\frac{r^2\kappa}{8B^2}\right).
\]
Choosing a large enough $C>0$ and taking \(r=C\epsilon B/\sqrt\kappa\) together with the condition on the burn-in time $t$
proves the claim.
\end{proof}

\citet{EloOurs}
proved that, when the ratings do not change over time, $X^t$ is at a distance squared of $\approx \eta^2/\kappa$ from the true ratings. \cref{thm:elo_augmented_point} generalises their result with an additional correction of $(B^2+\Delta^2)/ \kappa$ when the ratings change over time.

\cref{thm:elo_augmented_point} allows us to reason about the optimal choice of (a fixed) step size. Indeed, since $\kappa$ is linearly proportional to the step size $\eta$, ignoring the granularity terms, the bound in \cref{thm:elo_augmented_point} is minimised when $\eta = \Theta(\sqrt{\Delta})$. This confirms the intuition that you want to choose the step size as small as possible to minimise the bias, but large enough so that it can keep track of the changes in the ratings.

The previous result tells us that Elo can track the ratings of the players, as long as they do not change too fast. Players' form, however, ebbs and flows during a season. Furthermore, in sports like tennis, different periods in a season are associated with different court surfaces and certain players might perform better than others on a particular surface. Therefore, in many situations, we might want to average players' Elo ratings over a longer period of time. We apply \cref{thm:curv} to show that Elo ratings, averaged after a sufficiently long \emph{burn-in}, concentrates around the averaged true ratings. This result has the additional benefit that it does not require a bound on the granularity of the chain: the true ratings can change abruptly on occasions as long as the expected drift $\Delta$ is small.

\begin{theorem}[Averaged Elo ratings with a Markovian environment]
\label[theorem]{thm:elo_augmented_avg}
There exists a universal constant \(C>0\) such that, for every
\(\epsilon>0\), \(\delta\in(0,1)\), every \(T_0\ge1\), and every
\[
T\ge
C\epsilon^{-2}\eta^{-2}M^2n\log(n/\delta),
\]
with probability at least \(1-\delta\),
\[
\left\|
\frac1T\sum_{k=T_0+1}^{T_0+T}X^k
-
\frac1T\sum_{k=T_0+1}^{T_0+T}\rho^k
\right\|_2
\le
\sqrt{\frac{\Delta}{\kappa}}
+(1+\epsilon)\sqrt{\frac{2\eta^2}{\kappa}}.
\]
\end{theorem}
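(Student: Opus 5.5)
The plan is to apply \cref{thm:curv} to the joint chain $Z^k=(X^k,E^k)$ with a matrix-valued observable encoding the vector $X^k-\rho^k$. Then I control the deterministic bias $\frac1T\sum_k \E(X^k-\rho^k)$ separately using \cref{lem:elo_augmented_expect}.

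\textbf{Dilation and parameters.} For $z=(x,\rho,q)\in\mathcal Z$ let $g(z):=x-\rho\in\R^n$, and let $F(z)\in\mathsf H_{n+1}$ be its Hermitian dilation, $F(z)=\begin{pmatrix}0 & g(z)\\ g(z)^{\ast} & 0\end{pmatrix}$. Then $\lmax(F(z))=\|F(z)\|_{\op}=\|g(z)\|_2$, and $F$ is linear in $g$. Consequently
\[
\lmax\Big(\sum_k (F(Z^k)-\E F(Z^k))\Big)=\Big\|\sum_k (g(Z^k)-\E g(Z^k))\Big\|_2 .
\]
The observable is Lipschitz with $L=1$, since
\[
\|g(z)-g(\widetilde z)\|_2\le \|x-\widetilde x\|_2+\|\rho-\widetilde\rho\|_2\le d_{\mathcal Z}(z,\widetilde z).
\]
The diameter satisfies $D\le D_{\mathcal Z}\le 4M\sqrt n+2\sqrt2\lesssim M\sqrt n$, using $M>1$. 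By \cref{lem:elo_augmented_curvature} every kernel has curvature at least $\kappa$, so \cref{ass:curv} holds with this $\kappa$, because $1+\sum_{k}(1-\kappa)^{t-k+1}\le 1/\kappa$.

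\textbf{Concentration of the fluctuation.} I apply \cref{thm:curv} to the chain restarted at time $T_0$, with initial law equal to the law of $Z^{T_0}$; the theorem allows an arbitrary initial distribution. This gives
\[
\Pbb\Big(\Big\|\sum_{k=T_0+1}^{T_0+T}\big(g(Z^k)-\E g(Z^k)\big)\Big\|_2\ge T\varepsilon'\Big)\le (n+1)^{2-\pi/4}\exp\Big(-\frac{cT\kappa\varepsilon'^2}{M^2 n}\Big)
\]
for a universal constant $c>0$. I choose $\varepsilon'=\frac{\epsilon}{2}\sqrt{2\eta^2/\kappa}$. Then $\kappa\varepsilon'^2=\epsilon^2\eta^2/2$, so $\kappa$ cancels. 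The right-hand side is at most $\delta$ once $T\ge C\epsilon^{-2}\eta^{-2}M^2n\log(n/\delta)$, which is exactly the stated condition.

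\textbf{The mean term.} By the triangle inequality, Jensen, and \cref{lem:elo_augmented_expect},
\[
\Big\|\frac1T\sum_k \E g(Z^k)\Big\|_2\le \frac1T\sum_k\sqrt{\E\|X^k-\rho^k\|_2^2}\le \sqrt{\frac{\Delta}{\kappa}}+\sqrt{\frac{2\eta^2}{\kappa}}+\frac1T\sum_k(1-\kappa)^{(k-1)/2}\|X^0-\rho^1\|_2 .
\]
I then add this to the fluctuation bound $\varepsilon'=\frac{\epsilon}{2}\sqrt{2\eta^2/\kappa}$. The result is the claimed bound, provided the transient term is at most $\frac{\epsilon}{2}\sqrt{2\eta^2/\kappa}$.

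\textbf{Main obstacle: the transient term.} Since $X^0=0$ and $\rho^1\in\mathcal X_M$, we have $\|X^0-\rho^1\|_2\le M\sqrt n$. The geometric sum is at most $2M\sqrt n/(\kappa T)$, and using $k\ge T_0+1\ge 2$ gains a further factor $(1-\kappa)^{T_0/2}$. Absorbing it therefore requires $T\gtrsim M\sqrt n/(\epsilon\eta\sqrt\kappa)$, up to the factor $(1-\kappa)^{T_0/2}$.

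This is implied by the stated condition on $T$ when $\epsilon\eta\lesssim M\sqrt{n\kappa}\log(n/\delta)$, and that is the regime of interest, since otherwise the $\epsilon$ correction dominates. I would first try to verify it from the explicit relation $\kappa=\eta e^{-4M}\lambda/8$. If that is not enough, I would absorb the transient using the burn-in factor $(1-\kappa)^{T_0/2}$, or enlarge the universal constant $C$.
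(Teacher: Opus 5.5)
Your setup coincides with the paper's proof. You use the same Hermitian dilation of $x-\rho$ with $\Lip(F)^{\op}\le1$ and $D_{\mathcal Z}\lesssim M\sqrt n$. You apply \cref{thm:curv} to the restarted joint chain with $\varepsilon'=\frac{\epsilon}{2}\sqrt{2\eta^2/\kappa}$, so that $\kappa$ cancels in the exponent. You handle the bias through \cref{lem:elo_augmented_expect} and Jensen.

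The gap is exactly the step you flagged, and none of your proposed remedies closes it. Bounding $\frac1T\sum_k(1-\kappa)^{(k-1)/2}$ by $2/(\kappa T)$ forces $T\gtrsim M\sqrt n/(\epsilon\eta\sqrt\kappa)$. This is a $\kappa$-dependent requirement that the $\kappa$-free hypothesis on $T$ does not imply. At the threshold value of $T$, your transient bound exceeds the target $\frac{\epsilon}{2}\sqrt{2\eta^2/\kappa}$ by a factor of order $\epsilon\eta/(CM\sqrt{n\kappa}\log(n/\delta))$, which is unbounded over the parameters. Each remedy fails for this reason:
\begin{itemize}
\item Plugging in $\kappa=\eta e^{-4M}\lambda/8$ with $\lambda=O(1/n)$ does not help; it only shows that $\sqrt{n\kappa}$ can be exponentially small in $M$.
\item A universal $C$ cannot absorb an unbounded ratio.
\item The factor $(1-\kappa)^{T_0/2}$ is useless, because the statement must hold for $T_0=1$.
\end{itemize}

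The missing idea is the paper's Cauchy--Schwarz step,
\[
\frac1T\sum_{k=T_0+1}^{T_0+T}(1-\kappa)^{(k-1)/2}\le\frac{1}{\sqrt T}\Big(\sum_{k\ge1}(1-\kappa)^{k-1}\Big)^{1/2}\le\frac{1}{\sqrt{\kappa T}}.
\]
Equivalently, each term is at most $1$, so the average is at most $\min\{1,2/(\kappa T)\}\le\sqrt{2/(\kappa T)}$. The transient is then at most $M\sqrt n/\sqrt{\kappa T}$. Its $\kappa^{-1/2}$ matches the $\kappa^{-1/2}$ in $\frac{\epsilon}{2}\sqrt{2\eta^2/\kappa}$, so $T\ge 2M^2n/(\epsilon^2\eta^2)$ suffices. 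This follows from the hypothesis for $C\ge 2/\log 2$, since $\log(n/\delta)\ge\log 2$.

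Alternatively, your regime split can be made rigorous, but only with an ingredient you did not state. The left-hand side is deterministically at most $2M\sqrt n$, because $X^k,\rho^k\in[-M,M]^n$.
\begin{itemize}
\item If $\epsilon\eta\ge\sqrt2\,M\sqrt{n\kappa}$, then $\epsilon\sqrt{2\eta^2/\kappa}\ge 2M\sqrt n$, so the claimed bound holds trivially.
\item If $\epsilon\eta<\sqrt2\,M\sqrt{n\kappa}$ and $C\ge 4/\log 2$, your $2/(\kappa T)$ estimate already suffices.
\end{itemize}
Your remark that ``otherwise the $\epsilon$ correction dominates'' gestures at this, but it is not a proof.
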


\begin{proof}
By \cref{lem:elo_augmented_expect} and Jensen's inequality,
\begin{equation}
\label{eq:elo_point_bias}
\|\E(X^k-\rho^k)\|_2
\le
(1-\kappa)^{(k-1)/2}\|X^0-\rho^1\|_2
+\sqrt{\frac{\Delta}{\kappa}}
+\sqrt{\frac{2\eta^2}{\kappa}}.
\end{equation}
By Cauchy-Schwarz and the geometric sum bound
\(\sum_{k\ge1}(1-\kappa)^{k-1}\le\kappa^{-1}\),
\[
\frac1T\sum_{k=T_0+1}^{T_0+T}
(1-\kappa)^{(k-1)/2}\|X^0-\rho^1\|_2
\le
\frac{\|X^0-\rho^1\|_2}{\sqrt{\kappa T}}.
\]
Since \(\|X^0-\rho^1\|_2\le M\sqrt n\), the
lower bound on \(T\), for a large enough \(C>0\), makes this
term at most
\((\epsilon/2)\sqrt{2\eta^2/\kappa}\).  Averaging \eqref{eq:elo_point_bias} over
\(k=T_0+1,\ldots,T_0+T\) therefore gives
\[
\left\|
\E\left[\frac1T\sum_{k=T_0+1}^{T_0+T}(X^k-\rho^k)\right]
\right\|_2
\le
\sqrt{\frac{\Delta}{\kappa}}
+\left(1+\frac{\epsilon}{2}\right)\sqrt{\frac{2\eta^2}{\kappa}}.
\]
Now define \(F:\mathcal Z\to \mathsf H_{n+1}\) by
\[
F(x,\rho,q)=
\begin{pmatrix}
0 & (x-\rho)^T\\
x-\rho & \mathbf 0
\end{pmatrix}.
\]
Therefore, \(\|F(x,\rho,q)\|_{\op}=\|x-\rho\|_2\) and
\(\Lip(F)^{\op}\le1\) with respect to \(d_{\mathcal Z}\).  Applying
\cref{thm:curv} to the joint Elo and environment chain, we obtain
\[
\Pbb\left(
\left\|
\frac1T\sum_{k=T_0+1}^{T_0+T}(X^k-\rho^k)
-
\E\left[\frac1T\sum_{k=T_0+1}^{T_0+T}(X^k-\rho^k)\right]
\right\|_2
>r
\right)
\le
(n+1)^{2-\pi/4}
\exp\left(-\frac{\pi^2T\kappa r^2}{384D_{\mathcal Z}^2}\right).
\]
Since \(D_{\mathcal Z}\le 4M\sqrt n+2\sqrt2\), we have
\(D_{\mathcal Z}^2\le C_0M^2n\) for some universal constant \(C_0\).
Taking \(r=(\epsilon/2)\sqrt{2\eta^2/\kappa}\), and choosing the universal
constant \(C\) in the lower bound on \(T\) large enough, makes the RHS of the last display
at most \(\delta\).  This proves the claim.
\end{proof}

\begin{remark}
The constants \(\lambda,\nu,\Delta,h_\rho,h_q\) are taken to be uniform in time
only for simplicity of notation.  Our arguments also apply with deterministic
time-dependent bounds, at the cost of replacing the geometric factors appearing
above by the corresponding products and weighted sums.
\end{remark}

%% file: useful.tex
\section{Auxiliary results}
We prove a generalisation of Hoeffding's lemma to the Hermitian setting. While similar results are well known (see, e.g., \cite{userfriendly}), we have not found a reference for exactly the following result.

\begin{lemma}[Hermitian Hoeffding's lemma]\label[lemma]{lem:matrix_hoeffding}
Let \(Y\) be a random matrix in \(\Hm\) such that \(\E Y=0\) and assume there exists $R > 0$ such that \(-R\Id\preceq Y\preceq R\Id\) almost surely.
Then, for all \(s\in\R\),
\[
\E\big[\exp(sY)\big]\ \preceq\ \cosh(sR)\,\Id\ \preceq\ \exp\!\Big(\frac{s^2R^2}{2}\Big)\Id.
\]
In particular, \(\|\E[\exp(sY)]\|_{\op}\le \exp(s^2R^2/2)\) and
\(\E[\tr \exp(sY)]\le m\,\exp(s^2R^2/2)\).
\end{lemma}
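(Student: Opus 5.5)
The plan is to reduce the matrix statement to the scalar Hoeffding lemma along the spectral decomposition of $Y$, via the operator convexity of nothing more than the scalar exponential applied eigenvalue-wise.

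\textbf{Step 1: a pointwise matrix inequality.} For $y\in[-R,R]$, convexity of $y\mapsto e^{sy}$ gives the chord bound
\[
e^{sy}\le \frac{R-y}{2R}e^{-sR}+\frac{R+y}{2R}e^{sR}.
\]
Both sides are real functions of $y$, and the inequality holds on an interval containing the spectrum of $Y$. By the transfer rule for Hermitian matrices (if $f\le g$ on the spectrum of $A$, then $f(A)\preceq g(A)$), we obtain almost surely
\[
e^{sY}\preceq \frac{e^{-sR}}{2R}(R\Id-Y)+\frac{e^{sR}}{2R}(R\Id+Y).
\]
The right-hand side is affine in $Y$, which is what makes the next step work.

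\textbf{Step 2: take expectations.} Expectation preserves the Loewner order. Since the right-hand side is affine and $\E Y=0$,
\[
\E e^{sY}\preceq \tfrac12(e^{-sR}+e^{sR})\Id=\cosh(sR)\Id.
\]
The scalar inequality $\cosh x\le e^{x^2/2}$, which follows by comparing Taylor series termwise using $(2k)!\ge 2^k k!$, then yields the second Loewner bound.

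\textbf{Step 3: the stated consequences.} The matrix $\E e^{sY}$ is positive semidefinite, being an average of positive definite matrices. Hence its operator norm equals its largest eigenvalue, which is at most $e^{s^2R^2/2}$. Taking traces (which are monotone under $\preceq$) gives the bound $m\,e^{s^2R^2/2}$.

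\textbf{Main obstacle.} The only delicate point is Step 1: justifying the passage from a scalar inequality to a matrix inequality. This is legitimate here because both sides are functions of the single matrix $Y$ and therefore commute; one simply diagonalises $Y$ and compares eigenvalues. No operator monotonicity is needed.
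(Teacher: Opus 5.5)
Your proposal is correct and follows essentially the same route as the paper's proof: the chord bound for $e^{sy}$ on $[-R,R]$, transferred to $Y$ through its eigendecomposition, then expectation with $\E Y=0$ to get $\cosh(sR)\,\Id$, and finally $\cosh(u)\le e^{u^2/2}$. Your Step 3 spells out the operator-norm and trace consequences, which the paper leaves implicit; the justification you give for them (positive semidefiniteness of $\E e^{sY}$ and monotonicity of the trace under $\preceq$) is correct.
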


\begin{proof}
Diagonalize \(Y=U\Lambda U^\ast\) with \(\Lambda=\mathrm{diag}(\lambda_1,\dots,\lambda_m)\) and \(\lambda_i\in[-R,R]\).
For \(y\in[-R,R]\), convexity of \(e^{sy}\) gives
\(e^{sy}\le \frac{R+y}{2R}e^{sR}+\frac{R-y}{2R}e^{-sR}\).
Applying this inequality to each eigenvalue of $Y$,
we obtain
\[
e^{sY}\ \preceq\ \frac{R\Id+Y}{2R}e^{sR} +\frac{R\Id-Y}{2R}e^{-sR} .
\]
Taking expectations and using \(\E Y=0\) yields
\(\E e^{sY}\preceq \tfrac12(e^{sR}+e^{-sR})\Id=\cosh(sR)\Id\).
Finally \(\cosh(u)\le e^{u^2/2}\) for all \(u\in\R\), so \(\cosh(sR)\Id\preceq e^{s^2R^2/2}\Id\).
\end{proof}

We often need to bound the norm of (the difference of) matrix exponentials. We will use the following well known results, of which we include a proof for completeness.

\begin{lemma}[Matrix exponential bounds]\label[lemma]{lem:exp_bounds}
For any \(H\in\mathbb{C}^{m \times m}\), \(\|e^H\|_{\op} \le e^{\|H\|_{\op}}\).
For any \(A,B\in\mathbb{C}^{m \times m}\),
\[
\|e^{A}-e^{B}\|_{\op}\le e^{\max\{\|A\|_{\op},\|B\|_{\op}\}}\ \|A-B\|_{\op}.
\]
Moreover,
\[
\|e^{A}-e^{B}\|_{\mathrm F}\le e^{\max\{\|A\|_{\op},\|B\|_{\op}\}}\ \|A-B\|_{\mathrm F}.
\]
\end{lemma}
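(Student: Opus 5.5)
The first bound comes from a power-series estimate. Writing $e^H=\sum_{k\ge0}H^k/k!$ and using submultiplicativity of $\|\cdot\|_{\op}$ gives $\|e^H\|_{\op}\le\sum_k\|H\|_{\op}^k/k!=e^{\|H\|_{\op}}$.

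For the difference bounds I will use the Duhamel (interpolation) formula. Let $\Phi(u):=e^{uA}e^{(1-u)B}$ for $u\in[0,1]$. Then $\Phi(1)-\Phi(0)=e^A-e^B$, and
\[
\Phi'(u)=e^{uA}(A-B)e^{(1-u)B},
\]
since $A$ commutes with $e^{uA}$ and $B$ commutes with $e^{(1-u)B}$. Integrating,
\[
e^A-e^B=\int_0^1 e^{uA}(A-B)e^{(1-u)B}\,du.
\]

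Both norms are now handled by one inequality. The norms $\|\cdot\|_{\op}$ and $\|\cdot\|_{\mathrm F}$ both satisfy $\|XYZ\|\le\|X\|_{\op}\|Y\|\,\|Z\|_{\op}$; for the Frobenius norm this is the standard ideal property $\|XY\|_{\mathrm F}\le\|X\|_{\op}\|Y\|_{\mathrm F}$, applied on each side. Apply it to the integrand together with the first part of the lemma. This gives $\|e^{uA}\|_{\op}\le e^{u\|A\|_{\op}}$ and $\|e^{(1-u)B}\|_{\op}\le e^{(1-u)\|B\|_{\op}}$. Their product is at most $e^{\max\{\|A\|_{\op},\|B\|_{\op}\}}$ for every $u\in[0,1]$. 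Moving the norm inside the Bochner integral with the triangle inequality then yields both stated bounds at once, in $\|\cdot\|_{\op}$ and in $\|\cdot\|_{\mathrm F}$.

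The only point that needs care is justifying the differentiation of $\Phi$. For matrices this is routine because the exponential series converges absolutely and may be differentiated term by term. I will also note that the argument uses no Hermitian structure: it holds for arbitrary complex $A,B$, which covers the complex scalings $\tfrac12 e^{i\phi}s\widetilde F$ used earlier in the paper.
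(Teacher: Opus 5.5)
Your proof is correct and follows essentially the same route as the paper: the power-series bound for $\|e^H\|_{\op}$, the Duhamel representation of $e^A-e^B$ (the paper interpolates with $e^{(1-u)A}e^{uB}$, which is your $\Phi$ after the substitution $u\mapsto 1-u$), and the inequality $\|XYZ\|\le\|X\|_{\op}\|Y\|\,\|Z\|_{\op}$ applied in both the operator and Frobenius norms. The paper concludes the same way you do, by bounding the exponential factors in the integrand by $e^{\max\{\|A\|_{\op},\|B\|_{\op}\}}$ for every $u\in[0,1]$.
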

\begin{proof}
The first statement follows from the series definition of the matrix exponential and submultiplicativity of the operator norm.

For the remaining statements, we include a proof (which we learned from \cite{vomEnde2024MSEMatrixExpDiff}) for completeness.
We first prove the representation
\begin{equation}\label{eq:exp_diff_integral}
e^{A}-e^{B}\ =\ \int_{0}^{1} e^{(1-u)A}\,(A-B)\,e^{uB}\,du.
\end{equation}
Define \(f(u):=e^{(1-u)A}e^{uB}\) for \(u\in[0,1]\). Differentiating gives
\[
f'(u)=-Ae^{(1-u)A}e^{uB}+e^{(1-u)A}Be^{uB}
= e^{(1-u)A}(B-A)e^{uB}.
\]
Since \(f(0)=e^{A}\) and \(f(1)=e^{B}\), we have
\(e^{A}-e^{B}=-\int_{0}^{1} f'(u)\,du\),
which yields \eqref{eq:exp_diff_integral}.

For the operator-norm bound, apply submultiplicativity, the first statement, and \eqref{eq:exp_diff_integral}:
\[
\|e^{A}-e^{B}\|_{\op}
\le \int_0^1 \|e^{(1-u)A}\|_{\op}\,\|A-B\|_{\op}\,\|e^{uB}\|_{\op}\,du
\le e^{\max\{\|A\|_{\op},\|B\|_{\op}\}}\,\|A-B\|_{\op}.
\]

For the Frobenius-norm bound, use the  inequality \(\|XYZ\|_{\mathrm F}\le \|X\|_{\op}\|Y\|_{\mathrm F}\|Z\|_{\op}\), together with \eqref{eq:exp_diff_integral} and the first statement of the lemma:
\[
\|e^{A}-e^{B}\|_{\mathrm F}
\le \int_0^1 \|e^{(1-u)A}\|_{\op}\,\|A-B\|_{\mathrm F}\,\|e^{uB}\|_{\op}\,du
\le e^{\max\{\|A\|_{\op},\|B\|_{\op}\}}\,\|A-B\|_{\mathrm F}.
\]
\end{proof}

The next lemma establishes a bound on the Lipschitz constant of the product of two (matrix-valued) functions.
\begin{lemma}[Lipschitz product rule in operator norm]\label[lemma]{lem:Lip_product_rule}
Let \((\Omega,d)\) be a metric space and let \(G,H:\Omega\to\C^{m\times m}\) be matrix-valued maps.
Assume \(\|G\|_{\infty},\|H\|_{\infty},\Lip(G)^{\op},\Lip(H)^{\op}<\infty\).
Then
\[
\Lip(GH)^{\op}\ \le\ \|G\|_{\infty}\,\Lip(H)^{\op}\ +\ \Lip(G)^{\op}\,\|H\|_{\infty}.
\]

\end{lemma}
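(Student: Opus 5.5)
The proof is a direct pointwise estimate. Fix \(x\neq y\) and split the difference of products with the telescoping identity
\[
G(x)H(x)-G(y)H(y)=G(x)\bigl(H(x)-H(y)\bigr)+\bigl(G(x)-G(y)\bigr)H(y).
\]

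We then take operator norms and apply the triangle inequality together with submultiplicativity of \(\|\cdot\|_{\op}\). This gives
\[
\|G(x)H(x)-G(y)H(y)\|_{\op}\le \|G(x)\|_{\op}\,\|H(x)-H(y)\|_{\op}+\|G(x)-G(y)\|_{\op}\,\|H(y)\|_{\op}.
\]
In the first term we bound \(\|G(x)\|_{\op}\le\|G\|_\infty\) and \(\|H(x)-H(y)\|_{\op}\le \Lip(H)^{\op}d(x,y)\). In the second term we bound \(\|G(x)-G(y)\|_{\op}\le\Lip(G)^{\op}d(x,y)\) and \(\|H(y)\|_{\op}\le\|H\|_\infty\).

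Dividing by \(d(x,y)\) and taking the supremum over \(x\neq y\) yields \(\Lip(GH)^{\op}\le \|G\|_\infty\Lip(H)^{\op}+\Lip(G)^{\op}\|H\|_\infty\). The finiteness hypotheses guarantee that every quantity appearing here is finite.

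There is no real obstacle; the only care needed is the order of the factors, since the matrices do not commute. In the decomposition above, \(G\) always remains on the left and \(H\) on the right, so submultiplicativity applies directly. The paper uses this bound for the three-factor product \(W\,G\,W^\ast\). That case follows by applying the lemma twice, first to \(W\) and \(GW^\ast\), then to \(G\) and \(W^\ast\), and using \(\Lip(W^\ast)^{\op}=\Lip(W)^{\op}\) and \(\|W^\ast\|_\infty=\|W\|_\infty\). The result is the bound \(\|W\|_\infty^2\Lip(G)^{\op}+2\|W\|_\infty\Lip(W)^{\op}\|G\|_\infty\) used earlier.
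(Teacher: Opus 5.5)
Your proposal is correct and matches the paper's proof: the same splitting $G(x)\bigl(H(x)-H(y)\bigr)+\bigl(G(x)-G(y)\bigr)H(y)$, then the triangle inequality and submultiplicativity, then division by $d(x,y)$ and a supremum over $x\neq y$. Your extension to the three-factor product $W G W^\ast$ also checks out; applying the lemma twice gives exactly the bound $\|W\|_\infty^2\Lip(G)^{\op}+2\|W\|_\infty\Lip(W)^{\op}\|G\|_\infty$ that the paper uses in Regime (A).
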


\begin{proof}
For \(x\neq y\),
\[
G(x)H(x)-G(y)H(y)
=
G(x)\big(H(x)-H(y)\big)+\big(G(x)-G(y)\big)H(y).
\]
Taking operator norms and using submultiplicativity,
\[
\|G(x)H(x)-G(y)H(y)\|_{\op}
\le
\|G(x)\|_{\op}\,\|H(x)-H(y)\|_{\op}
+
\|G(x)-G(y)\|_{\op}\,\|H(y)\|_{\op}.
\]
Divide by \(d(x,y)\) and take the supremum over \(x\neq y\) to obtain
\[
\Lip(GH)^{\op}
\le
\|G\|_{\infty}\,\Lip(H)^{\op}+\Lip(G)^{\op}\,\|H\|_{\infty}.
\]
\end{proof}

%% file: kappa_squared.tex
\section{Concentration inequalities by Ollivier's method}
\label[appendix]{app:ollivier}
In this section we generalise the concentration inequalities of \citet{OllivierJFA} and \citet{JoulinOllivier} to the inhomogeneous and matrix setting.
We assume the same setup as in \cref{sec:curv}. In particular, we will be assuming Assumptions \ref{ass:curv}, \ref{ass:diam}, and \ref{ass:lip_obs}.

The following lemma follows by combining Lemma 7.6 and Lemma 4.3 of  \cite{userfriendly}.
\begin{lemma}
\label[lemma]{lem:tropp_step}
Let $P$ be a Markov operator on a Polish metric space $(\Omega,d)$. Let $G\colon \Omega \to \Hm$ and $x \in \Omega$ such that $PG(x) = 0$. Let $A \in \Hm$ such that $G(y)^2 \preceq A^2$ for any $y \in \supp P(x,\cdot)$.
Then, for any deterministic $H \in \Hm$,
\[
P \tr\exp\left( H + G \right)(x) \le  \tr\exp\left( H +  2 A^2 \right).
\]
\end{lemma}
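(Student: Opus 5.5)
The plan is to combine Lieb's concavity theorem, in the form of Tropp's subadditivity bound for matrix cumulant generating functions, with a bound on the matrix cumulant generating function of a centred and dominated random Hermitian matrix. Fix $x$ and let $Y\sim P(x,\cdot)$. Set $X:=G(Y)$, a random matrix in $\Hm$. By hypothesis $\E X=PG(x)=0$, and $X^2\preceq A^2$ almost surely, since $Y\in\supp P(x,\cdot)$ almost surely. The left-hand side of the claim is exactly $\E\tr\exp(H+X)$, so the lemma reduces to a single-step statement about one random matrix with $H$ deterministic.

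\emph{Step 1 (Lieb).} By Lieb's theorem, $X\mapsto \tr\exp(H+\log X)$ is concave on positive definite matrices. Jensen's inequality applied to $e^{X}$ then gives
\[
\E\tr\exp(H+X)\le \tr\exp\big(H+\log \E e^{X}\big).
\]
This is the content of the lemma from \citet{userfriendly} that we invoke (its Lemma 3.4 / Corollary 3.3 chain, which the citation refers to as Lemma 4.3). It requires no independence, only that $H$ is fixed.

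\emph{Step 2 (cumulant bound).} We show $\log\E e^{X}\preceq 2A^2$, which is the one-step estimate of Lemma 7.6 in \citet{userfriendly} used for matrix Azuma. We will not assume $X$ is symmetric, so we symmetrise. Let $X'$ be an independent copy of $X$ and let $\epsilon$ be an independent Rademacher sign. Because $\E X'=0$, Jensen's inequality (conditionally on $X$, using operator convexity of the exponential in the appropriate trace sense) yields $\E e^{X}\preceq \E e^{X-X'}=\E e^{\epsilon(X-X')}$. The matrix $\epsilon(X-X')$ is symmetric and satisfies $(X-X')^2\preceq 2X^2+2X'^2\preceq 4A^2$. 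For a symmetric matrix, expanding $e^{\epsilon Z}$ in its Taylor series kills the odd terms, and $Z^{2k}\preceq (4A^2)^k$ in the appropriate sense. Together with the series bound $\cosh\preceq e^{(\cdot)/2}$, this gives $\E e^{\epsilon Z}\preceq e^{2A^2}$. Operator monotonicity of $\log$ then yields $\log\E e^{X}\preceq 2A^2$.

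\emph{Step 3 (conclusion).} Since $M\mapsto\tr\exp(H+M)$ is monotone with respect to $\preceq$, Steps 1 and 2 give $\E\tr\exp(H+X)\le\tr\exp(H+2A^2)$, which is the claim.

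The main obstacle is Step 2. The symmetrisation inequality $\E e^{X}\preceq \E e^{X-X'}$ is not literally true in the semidefinite order for noncommuting matrices, so the pointwise Taylor comparison $Z^{2k}\preceq(4A^2)^k$ is delicate. The safe route is to carry out the symmetrisation at the level of the trace exponential with $H$ present: apply Step 1 conditionally in the other order, following exactly the argument of Tropp's proof of Lemma 7.6 and Remark 7.4. We would simply cite that argument, which is where the constant $2$ originates, rather than redo it.
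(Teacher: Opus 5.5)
Steps 1--3 as written do not prove the lemma. The problem is not only that Step 2 is delicate: its conclusion is false, and no bound on the cumulant can rescue the reduction it is meant to complete.

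\textbf{A counterexample.} Take $A=\mathrm{diag}(20,\tfrac1{10})$ and $y=\tfrac{1}{10\sqrt2}$. Let $X$ equal $X_\pm=\begin{pmatrix}14&\pm y\\ \pm y&0\end{pmatrix}$ with probability $\tfrac14$ each, and $\mathrm{diag}(-14,0)$ with probability $\tfrac12$.
\begin{itemize}
\item $\E X=0$, and $X^2\preceq A^2$ almost surely: for $X_\pm$, the matrix $A^2-X_\pm^2$ has positive diagonal and determinant $\approx 0.04$.
\item Conjugation by $\mathrm{diag}(1,-1)$ swaps $X_+$ and $X_-$, so $\E e^X$ is diagonal.
\item Its $(2,2)$ entry is $\tfrac12(e^{X_+})_{22}+\tfrac12\approx 16.3$. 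The reason is that the top eigenvector of $X_+$ (eigenvalue $\approx 14$) has squared $e_2$-component $\approx y^2/196$, which contributes $\approx e^{14}y^2/196\approx 31$ to $(e^{X_+})_{22}$.
\end{itemize}
Hence $e_2^\ast\log(\E e^X)e_2\approx 2.8>0.02=e_2^\ast(2A^2)e_2$. Letting the large eigenvalue of $A$ grow in the same construction shows that no bound $\log\E e^X\preceq cA^2$ holds with a universal $c$.

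Worse, take $H=\mathrm{diag}(-K,0)$ with $K\to\infty$. Then $\tr\exp(H+\log\E e^X)\to 16.3$, whereas $\tr\exp(H+2A^2)\to e^{0.02}$ (and the true left-hand side $\E\tr\exp(H+X)\to 1$). So once Lieb's inequality has been applied to the unsymmetrised $X$, as in your Step 1, the argument cannot close.

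Two further points. The comparison $Z^{2k}\preceq(4A^2)^k$ is unavailable, because $t\mapsto t^2$ is not operator monotone. And, as you note yourself, $\E e^X\preceq\E e^{X-X'}$ fails because $\exp$ is not operator convex.

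\textbf{The missing idea is the order of operations.} Symmetrise inside the trace first, and use Lieb only on the Rademacher sign.
\begin{itemize}
\item Let $X'$ be an independent copy of $X$. Convexity of $M\mapsto\tr\exp(H+M)$ and $\E X'=0$ give $\E\tr\exp(H+X)\le\E\tr\exp(H+X-X')=\E\tr\exp(H+\epsilon(X-X'))$.
\item Condition on $(X,X')$ and set $Z=X-X'$. Apply your Step 1 inequality to $\epsilon Z$ alone, together with the single-matrix bound $\log\cosh Z\preceq Z^2/2$. This gives $\E_\epsilon\tr\exp(H+\epsilon Z)\le\tr\exp(H+Z^2/2)$.
\item Only now use the hypothesis: $Z^2\preceq Z^2+(X+X')^2=2X^2+2X'^2\preceq 4A^2$. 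Since $\tr\exp(H+\cdot)$ is monotone, this yields $\tr\exp(H+2A^2)$, which is where the constant $2$ comes from.
\end{itemize}
This is the argument behind the two results of \citet{userfriendly} that the paper combines (the Azuma step and the Rademacher mgf bound), and it is what your last paragraph points to. But it replaces Steps 1--3 rather than repairing Step 2: Lieb must never be applied to the unsymmetrised $X$.
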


We will bound the trace of the matrix moment generating function of $(1/T) \cdot \sum_{k=1}^T F_k(X_k)$ by an inductive argument.
The next lemma is the key building block of such argument.

\begin{lemma}
\label[lemma]{lem:induction}
Let $P$ be a Markov operator on a Polish metric space $(\Omega,d)$. Let $G\colon \Omega \to \Hm$ be $L$-Lipschitz
and $H \in  \Hm$. Let $\sigma_\infty(x) \coloneqq \diam \supp P(x,\cdot)$.
Then,
\[
P \tr\exp\left( H + G \right)(x) \le  \tr\exp\left(H + PG(x) + 2 L^2 \sigma_{\infty}^2(x) \Id \right).
\]
\end{lemma}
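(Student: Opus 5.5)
The plan is to reduce the statement to \cref{lem:tropp_step} by splitting $G$ into its conditional mean and a centred fluctuation. Fix $x\in\Omega$ and write
\[
G(y) = PG(x) + G_0(y), \qquad G_0(y) := G(y) - PG(x).
\]
Here $PG(x)=\int G(y')\,P(x,dy')$ is a fixed Hermitian matrix, because $G$ is $\Hm$-valued and $\Hm$ is a closed convex cone under averaging. By construction $PG_0(x) = PG(x) - PG(x) = 0$, so $G_0$ satisfies the centring hypothesis of \cref{lem:tropp_step} at the point $x$.

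Next, I bound $G_0$ on the support of $P(x,\cdot)$, exactly as in the proof of \cref{lem:diam_center}. For $y\in\supp P(x,\cdot)$, we have $G_0(y)=\int \big(G(y)-G(y')\big)\,P(x,dy')$, and the measure $P(x,\cdot)$ gives full mass to its support. Every $y'$ in that support satisfies $d(y,y')\le \sigma_\infty(x)$. The triangle inequality and the Lipschitz assumption then give
\[
\|G_0(y)\|_{\op} \le \int \|G(y)-G(y')\|_{\op}\,P(x,dy') \le L\,\sigma_\infty(x).
\]
Since $G_0(y)$ is Hermitian, $G_0(y)^2 \preceq \|G_0(y)\|_{\op}^2\,\Id \preceq L^2\sigma_\infty(x)^2\,\Id$. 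We may therefore take $A := L\sigma_\infty(x)\,\Id\in\Hm$, which gives $A^2 = L^2\sigma_\infty^2(x)\Id$.

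Finally, I apply \cref{lem:tropp_step} with the deterministic Hermitian matrix $H' := H + PG(x)$ in place of $H$ and with $G_0$ in place of $G$. Since $H+G = H'+G_0$ pointwise, this yields
\[
P\tr\exp(H+G)(x) = P\tr\exp(H'+G_0)(x) \le \tr\exp\big(H + PG(x) + 2L^2\sigma_\infty^2(x)\Id\big),
\]
which is the claim.

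There is no serious obstacle. The only points needing care are minor technicalities: that the support of $P(x,\cdot)$ (which is closed, and has full measure on a Polish space) is the right set for the pointwise bound, and that $PG(x)$ is well defined. The latter holds because $G$ is Lipschitz. If $\sigma_\infty(x)<\infty$, $G$ is bounded on the support, so the integral converges. If $\sigma_\infty(x)=\infty$, the inequality is trivial.
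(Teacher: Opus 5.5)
Your proposal is correct and is the paper's proof: write $G = PG(x) + (G - PG(x))$, bound $\|G(y)-PG(x)\|_{\op}\le L\,\sigma_\infty(x)$ for $y\in\supp P(x,\cdot)$, and apply \cref{lem:tropp_step} with deterministic matrix $H+PG(x)$ and $A = L\sigma_\infty(x)\Id$. You also spell out details the paper leaves implicit, namely that the support has full measure and that $G_0(y)^2 \preceq \|G_0(y)\|_{\op}^2\,\Id$.
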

\begin{proof}
First of all we notice that, for any $y \in \supp P(x,\cdot)$, $\|G(y) - PG(x)\|_{\op} \le L \sigma_\infty(x)$.
Then, by \cref{lem:tropp_step},
\begin{align*}
P \tr\exp\left( H + G \right)(x) &= P \tr\exp\left( H + PG(x) + G - PG(x)  \right)(x) \\
&\le \tr\exp\left( H + PG(x) + 2L^2 \sigma_\infty^2(x) \Id \right).
\end{align*}
\end{proof}

The first main theorem of the section shows that positive curvature implies $F_T(X_T)$ concentrates around its expectation. This generalises Theorem 22 in \citep{OllivierJFA}.

\begin{theorem}
\label{thm:conc}
Let $F_T:\Omega\to\Hm$ be $L$-Lipschitz.  Let $\sigma_{\infty} \coloneqq \sup_{t \ge 1, x \in \Omega} \diam \supp P_t(x,\cdot)$.
Then, for any $T \ge 1$,
\[
\Pbb_{X_0}\!\left(\left\| F_T(X_T) - \E_{X_0}\!\left[F_T(X_T)\right]\right\|_{\op} \ge \varepsilon\right)
\le
2 m \, \exp\left(-\frac{\varepsilon^2 \kappa}{8L^2 \sigma_{\infty}^2 }\right)
\]
\end{theorem}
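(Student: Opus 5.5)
We prove a one-sided bound for $\lmax\big(F_T(X_T)-\E_{X_0}F_T(X_T)\big)$ and then apply it to $-F_T$. A union bound over the two sides gives the factor $2$ in front of $m$. The proof is the backward (Ollivier-type) martingale argument, run with \cref{lem:induction} in place of the scalar exponential estimate.

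\textbf{Step 1: propagate the observable backwards.} For $0\le k\le T$ define
\[
G_k := P_{k+1}P_{k+2}\cdots P_T F_T,
\]
with $G_T=F_T$. Then $G_0(X_0)=\E_{X_0}F_T(X_T)$ and $G_k=P_{k+1}G_{k+1}$. By \cref{lem:scalar_to_matrix_Lip} and the curvature bound $\Lip(P_t f)\le(1-\kappa_t)\Lip(f)$,
\[
\Lip(G_k)^{\op}\le L_k := L\prod_{\ell=k+1}^{T}(1-\kappa_\ell).
\]

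\textbf{Step 2: iterate the one-step estimate.} Fix $s>0$ and the deterministic matrix $H_0:=-s\,G_0(X_0)$, where we condition on $X_0$. We show by induction on $k$, from $T$ down to $1$, that
\[
\E\Big[\tr\exp\big(H_0+sG_k(X_k)\big)\,\Big|\,X_0,\dots,X_{k-1}\Big]\le \tr\exp\big(H_0+sG_{k-1}(X_{k-1})+2s^2L_k^2\sigma_\infty^2\Id\big)\cdot(\text{accumulated factor}).
\]
For this it is cleanest to carry the accumulated scalar shift $c_k\Id$ inside the exponential, where
\[
c_k:=2s^2\sigma_\infty^2\sum_{j=k+1}^{T}L_j^2.
\]
Apply \cref{lem:induction} with $P=P_k$, base point $x=X_{k-1}$, Lipschitz function $sG_k$ (constant $sL_k$), and deterministic $H=H_0+c_k\Id$. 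This yields
\[
P_k\tr\exp\big(H_0+c_k\Id+sG_k\big)(X_{k-1})\le \tr\exp\big(H_0+c_{k-1}\Id+sG_{k-1}(X_{k-1})\big).
\]
Iterating from $k=T$ to $k=1$ and using $H_0+sG_0(X_0)=0$, we get
\[
\E_{X_0}\tr\exp\big(s(F_T(X_T)-\E_{X_0}F_T(X_T))\big)\le m\,e^{c_0}.
\]

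\textbf{Step 3: bound the accumulated shift.} We need $\sum_{j=1}^T\prod_{\ell=j+1}^T(1-\kappa_\ell)^2\le 1/\kappa$. Since each factor lies in $[0,1]$, squaring only decreases the products, and the sum $\sum_{j}\prod_{\ell=j+1}^T(1-\kappa_\ell)$ is exactly the quantity $1+\sum_{k=1}^{T}\prod_{\ell=k}^{T}(1-\kappa_\ell)$ bounded in \cref{ass:curv}. Hence
\[
c_0\le \frac{2s^2L^2\sigma_\infty^2}{\kappa}.
\]

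\textbf{Step 4: Chernoff.} By \eqref{eq:laplace_maxeig},
\[
\Pbb\big(\lmax(\cdot)\ge\varepsilon\big)\le m\exp\big(-s\varepsilon+2s^2L^2\sigma_\infty^2/\kappa\big).
\]
Optimising at $s=\varepsilon\kappa/(4L^2\sigma_\infty^2)$ gives $m\exp\big(-\varepsilon^2\kappa/(8L^2\sigma_\infty^2)\big)$. The same bound for $-F_T$, combined with a union bound, handles $\|\cdot\|_{\op}$.

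\textbf{Main obstacle.} The delicate point is the matrix step in Step 2. \cref{lem:induction} needs $H$ to be deterministic given $X_{k-1}$, and after iterating it the argument of the exponential contains $sG_{k-1}(X_{k-1})$, which is random at the next stage. This is exactly why we fold it into the next application with $G_{k-1}$ as the new Lipschitz function, and keep $H_0+c_k\Id$ fixed. The other point to check is that the increments $sG_k-P_k(sG_k)(X_{k-1})$ are centred under $P_k(X_{k-1},\cdot)$, which holds by construction since $G_{k-1}=P_kG_k$.
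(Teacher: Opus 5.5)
Your proof is correct and is essentially the paper's argument: propagate $F_T$ backwards through $P_{k+1}\cdots P_T$, apply \cref{lem:induction} once per step with the curvature-contracted Lipschitz constants, accumulate the shifts $2s^2L_k^2\sigma_\infty^2\Id$ and bound their sum via \cref{ass:curv}, then finish with the Chernoff bound and symmetry. Building the centring $-sG_0(X_0)$ into $H$ from the start is equivalent to the paper applying the bound to $F_T-\E_{X_0}F_T(X_T)$ at the end. One small correction: $\sum_{j=1}^T\prod_{\ell=j+1}^T(1-\kappa_\ell)$ is not exactly the quantity in \cref{ass:curv}. It omits the nonnegative term $\prod_{\ell=1}^T(1-\kappa_\ell)$, so it is dominated by that quantity, which is all you need.
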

\begin{proof}
Let $s > 0$. We bound the matrix moment generating function
\begin{align*}
\E_{X_0}\!\left[\tr\exp(s F_T(X_T))\right]
&= \int_{\Omega^T} \tr \exp(s F_T(X_T)) \,
   P_T(X_{T-1},dX_T) \\
&\qquad\qquad\cdot P_{T-1}(X_{T-2},dX_{T-1}) \cdots P_1(X_0,dX_1) \\
&= \int_{\Omega^{T-1}} \left(P_{T} \tr \exp(s F_T)\right)(X_{T-1}) \, \\
&\qquad\qquad\cdot P_{T-1}(X_{T-2},dX_{T-1}) \cdots P_1(X_0,dX_1)
\end{align*}

Applying \cref{lem:induction} (with $P=P_T$, $H=0$, and $G=s F_T$) yields
\begin{align*}
&\E_{X_0}\!\left[\tr \exp(s F_T(X_T))\right] \\
&\,\le \int_{\Omega^{T-1}}\tr \exp\left(s (P_T F_T)(X_{T-1}) + 2s^2L^2\sigma_{\infty}^2\Id\right) \,  P_{T-1}(X_{T-2},dX_{T-1}) \cdots P_1(X_0,dX_1)\\
&\,= \int_{\Omega^{T-2}}\Bigl(P_{T-1}\tr \exp\left(s (P_T F_T) + 2s^2L^2\sigma_{\infty}^2\Id\right)\Bigr)(X_{T-2}) \,  P_{T-2}(X_{T-3},dX_{T-2}) \cdots P_1(X_0,dX_1).
\end{align*}

Applying \cref{lem:induction} a second time (with $P=P_{T-1}$ and $G=s\, P_T F_T$) and using the contraction
$\Lip(P_T F_T)^{\op} \le (1-\kappa_T)\,L$, we obtain
\begin{align*}
\E_{X_0}\!\left[\tr \exp(s F_T(X_T))\right]
&\le \int_{\Omega^{T-2}} \tr \exp\!
   \begin{aligned}[t]
   &\biggl(s (P_{T-1}P_T F_T)(X_{T-2}) + 2s^2L^2\sigma_{\infty}^2\Id \\[-0.2ex]
   &{}+ 2(1-\kappa_T)^2s^2L^2\sigma_{\infty}^2\Id\biggr)
   \end{aligned}
   \\
&\hspace{6.2em}\cdot P_{T-2}(X_{T-3},dX_{T-2}) \cdots P_1(X_0,dX_1).
\end{align*}

Iterating this argument and using
$\Lip(P_{T-k:T}F_T)^{\op} \le L\prod_{t=T-k}^T (1-\kappa_t)$, we obtain
\begin{align*}
\E_{X_0}\!\left[\tr\exp(s F_T(X_T))\right]
&\le \tr \exp\left(s (P_{1:T}F_T)(X_{0}) + 2s^2L^2 \sigma_{\infty}^2 \sum_{k=0}^{T-1} \prod_{t=T-k+1}^T (1-\kappa_t)^2\Id\right).
\end{align*}

By \cref{ass:curv},
\[
\E_{X_0}\!\left[\tr\exp(s F_T(X_T))\right]
\le \tr \exp\left(s \E_{X_0}\!\left[F_T(X_{T})\right] + 2s^2L^2 \sigma_{\infty}^2\kappa^{-1} \Id\right).
\]

Applying the previous moment bound to the centred function
\(\widetilde F_T := F_T-\E_{X_0}[F_T(X_T)]\) and then using Markov's inequality,
\[
\Pbb_{X_0}\!\left(\lmax\left(F_T(X_T) - \E_{X_0}\!\left[F_T(X_{T})\right] \right) \ge \varepsilon\right)
\le m \exp\left(-s \varepsilon + 2 s^2 L^2 \sigma_{\infty}^2 \kappa^{-1}\right).
\]

Choosing $s = \frac{\varepsilon \kappa}{4 L^2 \sigma_{\infty}^2}$ yields
\[
\Pbb_{X_0}\!\left(\lmax\left(F_T(X_T) - \E_{X_0}\!\left[F_T(X_{T})\right] \right) \ge \varepsilon\right)
\le m \exp\left(-\frac{\varepsilon^2 \kappa}{8L^2 \sigma_{\infty}^2 }\right).
\]

The statement follows by symmetry.

\end{proof}

We now generalise the concentration inequality for empirical means of \citet{JoulinOllivier}. For technical reasons, we will use a slightly different definition of effective curvature.

\begin{assumption}[Positive curvature, alternative definition]\label[assumption]{ass:curv2}
For all \(t\ge 1\), we have \(\kappa_t \in [0,1]\). Furthermore, there exists \(\tilde\kappa \in (0,1]\) such that, for all \(1\le s\le t\),
\begin{equation*}
1+\sum_{k=s}^{t}\prod_{\ell=s}^{k}(1-\kappa_\ell) \le\ \frac{1}{\tilde\kappa}.
\end{equation*}
\end{assumption}

\begin{theorem}
\label{thm:matrixMCMC}
Let $F_t:\Omega\to\Hm$ be $L$-Lipschitz for all $t \ge 1$.
\par\noindent
Let \(\sigma_{\infty} \coloneqq \sup_{t \ge 1, x \in \Omega} \diam \supp P_t(x,\cdot)\).
Then, for any $\varepsilon > 0$,
\[
\Pbb_{X_0}\!\left(\left\|\frac{1}{T}  \sum_{k=1}^T F_k(X_k) - \E_{X_0}\!\left[\frac{1}{T}  \sum_{k=1}^T F_k(X_k)\right]\right\|_{\op} >  \varepsilon\right)
\le 2m \exp\left(-\frac{\tilde\kappa^2 T \varepsilon^2 }{8L^2\sigma_\infty^2}\right).
\]
\end{theorem}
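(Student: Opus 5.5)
We prove \cref{thm:matrixMCMC} with a backward induction in the style of the proof of \cref{thm:conc}, applied to the whole additive functional at once. Fix $s>0$ and set $S_T:=\sum_{k=1}^T F_k(X_k)$. We peel off one integration at a time, starting at time $T$. For this, define recursively the ``accumulated'' observables
\[
G_T:=F_T,\qquad G_{k}:=F_k+P_{k+1}G_{k+1}\quad (1\le k<T).
\]
Thus $G_k=\sum_{j=k}^T P_{k+1:j}F_j$, with $P_{k+1:k}$ the identity.

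\emph{Key steps.}
\begin{enumerate}
\item At step $k$ we are left with $\E_{X_0}\tr\exp\big(s\sum_{j<k}F_j(X_j)+sG_k(X_k)+c_k\Id\big)$, where $c_k$ is a deterministic constant. We condition on $X_0,\dots,X_{k-1}$ and apply \cref{lem:induction} with $P=P_k$, $H=s\sum_{j<k}F_j(X_j)+c_k\Id$ and $G=sG_k$. This replaces $sG_k(X_k)$ by $sP_kG_k(X_{k-1})$ and adds $2s^2\Lip(G_k)^2\sigma_\infty^2\Id$. The term $sF_{k-1}(X_{k-1})+sP_kG_k(X_{k-1})$ is exactly $sG_{k-1}(X_{k-1})$, so the induction closes. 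Each step needs $H$ deterministic given the past, and it is.
\item We bound $\Lip(G_k)^{\op}$ using \cref{lem:scalar_to_matrix_Lip} and the per-step contraction:
\[
\Lip(G_k)^{\op}\le L\sum_{j=k}^T\prod_{\ell=k+1}^{j}(1-\kappa_\ell)\le L\,\Big(1+\sum_{j=k+1}^T\prod_{\ell=k+1}^j(1-\kappa_\ell)\Big)\le L/\tilde\kappa.
\]
Here \cref{ass:curv2} with starting index $s=k+1$ gives exactly this forward-sum form. This is why the alternative definition of effective curvature is used: the products here run forward from a fixed start time, not backward from the end.
\item After $T$ steps we obtain
\[
\E_{X_0}\tr e^{sS_T}\le \tr\exp\big(s\,\E_{X_0}S_T+2s^2TL^2\sigma_\infty^2\tilde\kappa^{-2}\Id\big).
\]
The final $P_1G_1(X_0)$ is $\E_{X_0}S_T$ because $X_0$ is deterministic. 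Applying this to the centred observables $F_k-\E F_k(X_k)$, which leaves the Lipschitz constants unchanged, gives the bound $m\exp(2s^2TL^2\sigma_\infty^2\tilde\kappa^{-2})$.
\end{enumerate}

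We then apply \eqref{eq:laplace_maxeig} at level $T\varepsilon$ and optimise, choosing $s=\varepsilon\tilde\kappa^2/(4L^2\sigma_\infty^2)$. This gives the exponent $-T\varepsilon^2\tilde\kappa^2/(8L^2\sigma_\infty^2)$. Repeating the argument for $-F_k$ and taking a union bound yields the factor $2m$ and the operator-norm statement.

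The main obstacle is step 1. The conditioning must genuinely allow \cref{lem:induction} to be applied with $H$ containing the random but already-fixed past matrices $F_j(X_j)$, $j<k$. The bound must also be valid inside the outer expectation, so it has to hold pointwise in $X_{k-1}$ with the constant $\sigma_\infty$ uniform. Since \cref{lem:tropp_step} (via Lieb's concavity, Tropp's Lemma 7.6) holds for any fixed Hermitian $H$, I expect this to go through. The remaining care is bookkeeping: verifying that the centring $PG(x)$ inside \cref{lem:induction} is exactly the term that merges into $G_{k-1}$.
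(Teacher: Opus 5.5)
Your proposal is correct and is essentially the paper's proof: the same backward induction applying \cref{lem:induction} once per time step to the accumulated observable $G_k=\sum_{j=k}^T P_{k+1:j}F_j$ (the paper's $g$), the same bound $\Lip(G_k)^{\op}\le L/\tilde\kappa$ from \cref{ass:curv2}, and the same Chernoff bound followed by symmetry. The only difference is cosmetic: you bound $\E_{X_0}\tr e^{sS_T}$ and apply the Chernoff bound at level $T\varepsilon$, whereas the paper works with $e^{(s/T)S_T}$ at level $\varepsilon$, and both give the identical exponent.
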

\begin{remark}
Notice that \cref{thm:matrixMCMC} does not require an explicit bound on the diameter. It does so, however, at a cost of a quadratic dependency on $\kappa^{-1}$.
\end{remark}
\begin{proof}
We bound the matrix moment generating function of $(1/T) \cdot \sum_{k=1}^T F_k(X_k)$ by an inductive argument.
In particular, we prove that, for any $1 \le t \le T$,
\begin{align}
&\E_{X_0}\!\left[\tr\exp\left(\frac{s}{T}  \sum_{k=1}^T F_k(X_k)\right)\right] \nonumber \\
&\qquad\le  \int_{\Omega^{T-t}} \tr\exp\left( \frac{s}{T}  \sum_{k=1}^{T-t} F_k(X_k) + \frac{s}{T} \sum_{j=1}^t P_{T-t + 1} \cdots P_{T-t + j} F_{T-t+j}(X_{T-t})
 + \frac{2 t s^2 L^2  \sigma_\infty^2}{\tilde\kappa^2T^2}  \Id\right) \nonumber \\
&\qquad \qquad \qquad  P_{T-t}(X_{T-t-1},dX_{T-t}) \cdots P_1(X_0,dX_1). \label{eq:induction}
\end{align}

We start with the base case $t=1$. We apply \cref{lem:induction} choosing $P = P_{T}$, $H  = (s/T) \sum_{k=1}^{T-1} F_k(X_k)$,  and $g = (s/T) F_T$. We obtain
\begin{align*}
&\E_{X_0}\!\left[\tr\exp\left(\frac{s}{T}  \sum_{k=1}^T F_k(X_k)\right)\right] \\
&\, =  \int_{\Omega^T} \tr\exp\left(\frac{s}{T}  \sum_{k=1}^T F_k(X_k) \right) \,
   P_T(X_{T-1},dX_T) P_{T-1}(X_{T-2},dX_{T-1}) \cdots P_1(X_0,dX_1) \\
&\,  =  \int_{\Omega^{T-1}} P_{T} \tr\exp\left(\frac{s}{T}  \sum_{k=1}^T F_k(X_k) \right) (X_{T-1}) P_{T-1}(X_{T-2},dX_{T-1}) \cdots P_1(X_0,dX_1) \\
&\,  \le  \int_{\Omega^{T-1}}  \tr\exp\!
   \begin{aligned}[t]
   &\biggl(\frac{s}{T}  \sum_{k=1}^{T-1} F_k(X_k)
   + \frac{s P_{T}F_{T}(X_{T-1})}{T}  + 2L^2 \sigma_\infty^2 s^2 T^{-2} \Id\biggr)
   \end{aligned}
   \, \\
&\qquad\qquad\cdot P_{T-1}(X_{T-2},dX_{T-1}) \cdots P_1(X_0,dX_1).
\end{align*}

Suppose now \cref{eq:induction} holds for $1 \le t \le T-1$.
We apply \cref{lem:induction} with $P = P_{T-t}$ and $g  = \frac{s}{T}  F_{T-t} + \frac{s}{T} \sum_{j=1}^{t} P_{T-t + 1 : T-t + j} F_{T-t+j}$.
Notice that, by \cref{ass:curv2},
\[
\Lip(g)
\le
\frac{sL}{T}
\left(
1+\sum_{j=1}^{t}
\prod_{i=T-t+1}^{T-t+j}(1-\kappa_i)
\right)
\le \frac{sL}{\tilde\kappa T}.
\]
Therefore,
\begin{align*}
&\E_{X_0}\!\left[\tr\exp\left(\frac{s}{T}  \sum_{k=1}^T F_k(X_k)\right)\right]  \\
&\, \le  \int_{\Omega^{T-t}} \tr\exp\!
   \begin{aligned}[t]
   &\biggl(\frac{s}{T}  \sum_{k=1}^{T-t - 1} F_k(X_k)
   + \frac{s}{T}  F_{T-t}(X_{T-t}) \\[-0.2ex]
   &{}+ \frac{s}{T} \sum_{j=1}^t P_{T-t+1} \cdots P_{T-t+j} F_{T-t+j}(X_{T-t})
   + \frac{2 t L^2 s^2  \sigma_\infty^2}{\tilde\kappa^2T^2}  \Id\biggr)
   \end{aligned}
   \\
 &\, \qquad \qquad  P_{T-t}(X_{T-t-1},dX_{T-t}) \cdots P_1(X_0,dX_1) \\
 &\,\le \int_{\Omega^{T-t-1}} \tr\exp\!
   \begin{aligned}[t]
   &\biggl(\frac{s}{T}  \sum_{k=1}^{T-t - 1} F_k(X_k) 
   +  \frac{s}{T} \sum_{j=1}^{t+1}
   P_{T-t} \cdots P_{T-t-1+j} F_{T-t-1+j}(X_{T-t-1}) \\[-0.2ex]
   &{}+ \frac{2 L^2 s^2  \sigma_\infty^2}{\tilde\kappa^2T^2} \Id
   + \frac{2 t L^2 s^2  \sigma_\infty^2}{\tilde\kappa^2T^2}  \Id\biggr)
   \end{aligned}
   \\
 &\, \qquad \qquad  P_{T-t-1}(X_{T-t-2},dX_{T-t-1}) \cdots P_1(X_0,dX_1),
\end{align*}
which proves \cref{eq:induction}.

After $t=T$ steps, \cref{eq:induction} yields
\begin{align*}
\E_{X_0}\!\left[\tr\exp\left(\frac{s}{T}  \sum_{k=1}^T F_k(X_k)\right)\right]
&\le \tr\exp\left(\frac{s}{T} \sum_{k=1}^T P_{1} \cdots P_{k} F_k(X_0) + \frac{2 s^2  L^2 \sigma_\infty^2}{\tilde\kappa^2T}  \Id \right) \\
&= \tr\exp\left(\E_{X_0}\!\left[\frac{s}{T} \sum_{k=1}^T F_k(X_k)\right] + \frac{2 s^2  L^2 \sigma_\infty^2}{\tilde\kappa^2T}  \Id  \right).
\end{align*}

Apply the preceding moment bound to the centred observables
\(\widetilde F_k := F_k-\E_{X_0}[F_k(X_k)]\). Set $s = \frac{\varepsilon \tilde\kappa^2 T}{4 L^2\sigma_\infty^2}$ and use Markov's inequality to obtain
\begin{align*}
&\Pbb_{X_0}\!\left(\lmax\left(\frac{1}{T} \cdot \sum_{k=1}^T F_k(X_k) - \E_{X_0}\!\left[\frac{1}{T}  \sum_{k=1}^T F_k(X_k)\right]\right) >  \varepsilon\right) \\
&\qquad\qquad \le \exp(-s \varepsilon) \E_{X_0}\!\left[\tr\exp\left(\frac{s}{T}  \sum_{k=1}^T F_k(X_k) - \E_{X_0}{\frac{s}{T} \sum_{k=1}^T F_k(X_k)}\right)\right] \\
&\qquad\qquad\le \exp(-s \varepsilon) \tr\exp\left(\frac{2 s^2 L^2 \sigma_\infty^2}{\tilde\kappa^2T}  \Id  \right) \\
&\qquad\qquad\le  m \exp\left(\frac{2 s^2 L^2  \sigma_\infty^2}{\tilde\kappa^2 T} -s \varepsilon\right) \\
&\qquad\qquad\le m \exp\left(-\frac{\tilde\kappa^2 T \varepsilon^2 }{8L^2\sigma_\infty^2}\right).
\end{align*}
The statement of the theorem follows by considering $(-F_t)_t$ as well.
\end{proof}